\documentclass{amsart}

\usepackage{stile2017}

\usepackage[a4paper,top=4cm,bottom=4cm,left=4cm,right=4cm]{geometry}
\usepackage{array}

\title[Long gaps for cubic and biquadratic diagonal forms]{Arbitrarily long gaps between the values of positive-definite cubic and biquadratic diagonal forms}
\author{Luca Ghidelli}
\date{\today}

\address{150 Louis-Pasteur Private, Office 608, Department of Mathematics and Statistics, University of Ottawa, Ottawa ON K1N 9A7, Canada}
\email{{luca.ghidelli@uottawa.ca}}
\subjclass[2010]{Primary 11B05, Secondary 11R37, 11R45, 11T06, 11T24}



\newcommand{\qq}{\mfk q}

\renewcommand{\and}{\quad\ \  \text{ and }\ \ \quad}


\newcommand{\Li}{\op{Li}}
\newcommand{\FF}{\mbb{F}}
\renewcommand{\P}{{\mcl{P}}}
\newcommand{\K}{K}
\newcommand{\X}{\mfk X}
\renewcommand{\Re}{\op{\mbb Re}}
\newcommand{\xxi}{{\boldsymbol \xi}}


\newcommand{\s}{{s}}  	
\newcommand{\x}{{\mbf x}}
\newcommand{\form}{F(\x)}
\newcommand{\sform}{F}
\newcommand{\Set}{\mcl S_\sform}
\newcommand{\Gap}{\op{Gap}} 
\newcommand{\sol}{r}
\newcommand{\Sol}{{\mcl R}}
\newcommand{\erre}{r}
\newcommand{\Erre}{{\mcl R}}


\newcommand{\aphi}{\varphi}
\newcommand{\bphi}{\bar\varphi}
\newcommand{\aux}{\alpha}
\newcommand{\achi}{\chi_{aux}}
\newcommand{\pp}{{\mfk{q}}}
\newcommand{\primo}{p} 
\newcommand{\pprimo}{q} 
\newcommand{\pol}{f_{aux}}
\newcommand{\Saux}{S_{aux}}
\newcommand{\proj}{\pi}

\newcommand{\maier}{{\mcl R}} 
\newcommand{\formm}{F'(\x)}
\newcommand{\sformm}{F'}


\newcommand{\chis}{\chi_{\s,p}}
\newcommand{\chip}{\chi_{3,p}}
\newcommand{\wbchip}{\wb\chi_{3,p}}
\newcommand{\chipp}{\chi_{4,q}}
\newcommand{\chippl}{\chi_{2,q}}
\newcommand{\wbchipp}{\wb\chi_{4,q}}
\newcommand{\Hp}{H_{\sform,p}}
\newcommand{\Kp}{K_{\sform,p}}
\newcommand{\Hpp}{H_{\sform,q}}
\newcommand{\Kpp}{K_{\sform,q}}
\newcommand{\rhop}{\Re\Hp} 
\newcommand{\rhopp}{\Re\Hpp} 
\newcommand{\Pquattro}{\P_{\sform,1}}


\renewcommand{\b}{\beta}

\renewcommand{\d}{\delta}
\newcommand{\ax}{\a}
\newcommand{\bx}{\b}

\newcommand{\cxx}{C_{\sform,K}}

\newcommand{\kx}{\kappa_{\sform}}
\newcommand{\dx}{\delta}
\newcommand{\dxx}{\delta_0}
\newcommand{\gx}{\gamma_\sform}
\newcommand{\e}{\varepsilon}

\newcommand{\f}{f}
\newcommand{\Hom}{\op{Hom}}
\newcommand{\J}{\mfk J}

\newcommand{\si}{\sigma}
\newcommand{\ua}{\underline{a}}

\renewcommand{\O}{\mcl O}
\renewcommand{\a}{\alpha}
\renewcommand{\u}{\mbf u}
\newcommand{\m}{\mfk m}
\newcommand{\I}{\mcl I}
\newcommand{\p}{\mfk p}
\newcommand{\uno}{\mbf{1}}

\renewcommand{\k}{\mbf{k}}
\newcommand{\A}{\mcl{A}}
\newcommand{\B}{\mcl{B}}



\begin{document}

\begin{abstract}
For $\s=3,4$, we prove the existence of arbitrarily long sequences of consecutive integers none of which is a sum of $\s$ nonnegative $\s$-th powers.
More generally, we study the existence of gaps between the values $\leq N$ of diagonal forms of degree $\s$ in $\s$ variables with positive integer coefficients.
We find: (1) gaps of size $O\left(\frac{\sqrt {\log N}}{(\log \log N)^2}\right)$ when $\s=3$; 
(2) gaps of size $O\left(\frac{\log\log\log N}{\log\log\log\log N}\right)$  if $\s=4$ and the form, up to permutation of the variables, is not equal to $a (c_1x_1)^4+b (c_2 x_2)^4+4 a (c_3x_3)^4+4b(c_4x_4)^4$.
\end{abstract}

\maketitle

\tableofcontents

%
%

\section{Introduction}\label{sec:intro}

Let $\s\in \N_+$ and let $\form= a_1 x_1^\s + \dots+ a_\s x_\s^\s$ be a diagonal form of degree $\s$ in $\s$ variables with positive integer coefficients $a_1,\ld,a_\s\in\N_+$. 
In this article by \emph{values of $\form$} we mean the natural numbers obtained by evaluating the diagonal form at nonnegative integers $x_1,\ld,x_\s\in\N$. 
A \emph{gap} of length $K$ between these values is a sequence of consecutive nonnegative integers $n+1,\ld,n+K$ that are not values of $\form$. 
When $\s=2$ the polynomial $F(\x)$ is a multiple of a norm form and so the values of $\form$ form a set with natural density 0 in $\N$ (see Landau \cite{Landau} for the prototypical case $F(\x)=x_1^2+x_2^2$ and Odoni \cite{Odoni} for general norm forms). 
In particular if $\s=2$ there are arbitrarily long gaps between the values of $F(\x)$. 
When $\s\geq 3$ the polynomial $\form$ is irreducible over $\C$ and so it is not a norm form.  
In fact very little is known unconditionally about the distribution of the values of $\form$  if $\s\geq 3$ (see \cite{GRHdistribution} for some results conditional on GRH)  
 but it is reasonable to expect, on the basis of probabilistic models \cite{density1} \cite{density2}, that the set of values of $\form$ has positive density. 
Nevertheless, we may ask if there are arbitrarily long gaps between the values of $\form$, when $\s\geq 3$. 
In this article we give a positive answer in two cases. 
First, for all trinomial positive-definite cubic diagonal forms:

\begin{theorem}\label{thm:main:intro:3}
Let $\form$ be as above, with $\s=3$. 
Then there is a constant $\kx\great 0$ such that for all integers $N,K$ satisfying $N\great e^e$, $K\geq 2$ and $K\less\kx\frac{\sqrt{\log N}}{(\log\log N)^2}$ there exist gaps of length $K$ between the values of $\form$ less than $N$. 
\end{theorem}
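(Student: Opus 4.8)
The plan is to construct, for each $K$, a modulus $M$ and a residue class $n \pmod M$ such that no integer in $\{n+1,\dots,n+K\}$ is a value of $\form = a_1x_1^3 + a_2x_2^3 + a_3x_3^3$, and then show that such an $n$ can be taken below $N$ once $K$ is admissibly small. The key local obstruction is a prime $p$ with $p \equiv 1 \pmod 3$: in $\FF_p$ the cubic residues form a subgroup of index $3$, so the set of values of $a_i x_i^3$ is small, and the sumset of three such small sets need not cover $\FF_p$. More precisely, I would fix $p \equiv 1 \pmod 3$ with $p \nmid a_1a_2a_3$; then the number of residues mod $p$ representable as $a_1x_1^3+a_2x_2^3+a_3x_3^3$ is at most roughly $p - c\sqrt p$ for a positive constant, by a standard character-sum / Weil bound computation (each cube-value set has about $p/3$ elements, and the triple sumset misses a positive proportion). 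So a single good prime $p$ already forbids a positive density of residues mod $p$; to forbid an entire window of $K$ consecutive integers I would use a coprime family of primes $p_1,\dots,p_t$, each $\equiv 1 \pmod 3$, and for each position $j \in \{1,\dots,K\}$ assign one prime $p_{i(j)}$ together with a residue class $b_j \bmod p_{i(j)}$ that is a non-value of $\form$; by CRT I then solve $n + j \equiv b_j \pmod{p_{i(j)}}$ simultaneously, obtaining a single arithmetic progression of forbidden $n$.

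The quantitative heart is counting how many primes $p \equiv 1 \pmod 3$ up to a bound $P$ are available and how many forbidden positions each can absorb. Since each good prime $p$ kills a positive proportion of residues, by a sieve/greedy argument one prime can be made to handle several of the $K$ slots; but the cleanest bookkeeping assigns one prime per slot, so I want at least $K$ primes $\equiv 1 \pmod 3$ not dividing $a_1a_2a_3$, hence $P \gg K \log K$ by Dirichlet / the prime number theorem in arithmetic progressions, and the modulus is $M = \prod_{i=1}^{K} p_i \leq \exp\!\big(O(K\log K)\big)$ in the crude version. To get the sharper exponent $\sqrt{\log N}$, one must instead use that each prime forbids $\gg p$ residues: with primes of size about $y$, a single prime can cover on the order of one slot while contributing only $\log y$ to $\log M$, and more efficiently one uses primes just slightly larger than $K$ (so $\log p \asymp \log K$) but exploits that we do not need all of $\{1,\dots,K\}$ covered by distinct primes — overlaps are allowed since "forbidden" is monotone. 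Balancing, with primes in a dyadic range near $K(\log K)^2$ or so, one finds $\log M \ll K (\log K)^2$, whence the forbidden progression meets $[1,N]$ as soon as $M \leq N$, i.e. $K(\log K)^2 \ll \log N$, which rearranges to $K \ll \sqrt{\log N}/(\log\log N)^2$ — matching the claimed bound. I would want to name the positive proportion constant explicitly (it comes out of a Weil bound for the number of $\FF_p$-points on the affine surface $a_1x_1^3+a_2x_2^3+a_3x_3^3 = c$, uniformly in $c$) since the constant $\kx$ depends on it.

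The main obstacle I anticipate is \emph{uniformity in $c$} of the count of representable residues: one needs that for \emph{every} residue class $c$, or at least for a positive proportion of residues $c$ \emph{that can be chosen consistently across slots}, the equation $a_1x_1^3+a_2x_2^3+a_3x_3^3 \equiv c \pmod p$ has a controlled number of solutions, and that the non-representable set is large enough and "robust" under varying $p$. This is where cubic character sums and the geometry of the diagonal cubic surface enter: the Jacobi-sum evaluation of $N_p(c) := \#\{x : \sum a_i x_i^3 = c\}$ is $N_p(c) = p^2 + (\text{main correction}) + O(p)$, and one must verify the error term is genuinely $O(\sqrt{p}\cdot p)=O(p^{3/2})$ so that $N_p(c) < p^2$ — equivalently $c$ is a non-value — for a positive density of $c$. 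A secondary technical point is ensuring the primes chosen avoid the (finitely many) bad primes dividing $a_1a_2a_3$ or ramifying, which only removes $O(1)$ primes and does not affect the asymptotics. Packaging all of this — the local density lemma, the CRT construction, and the optimization of the prime sizes against $\log N$ — into the stated clean inequality with a single constant $\kx$ is then routine but bookkeeping-heavy.
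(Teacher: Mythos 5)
Your construction has a fatal flaw at its very first step: for $p\equiv 1\pmod 3$ with $p\nmid a_1a_2a_3$ and $p$ large there are \emph{no} residue classes $b\bmod p$ that fail to be represented by $F$ modulo $p$. The Weil-type bound you invoke (the paper's \cref{erre:1}) gives $\sol_\sform(m,p)\geq p^2-8p^{3/2}>0$ for every $m$ once $p>64$, and the Jacobi-sum formula for the zero class gives $\sol_\sform(0,p)\geq p^2-2(p^{3/2}-p^{1/2})>0$ as well; so the triple sumset of the cube-value sets does cover $\mathbb{F}_p$ for all but finitely many admissible $p$. Your sentence ``$N_p(c)<p^2$ --- equivalently $c$ is a non-value'' conflates ``fewer solutions than the average $p^2$'' with ``no solutions'': the former happens for a positive proportion of primes (for $c=0$), the latter essentially never. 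Consequently the CRT step, which needs a non-representable class $b_j\bmod p_{i(j)}$ for each of the $K$ slots, has nothing to feed on: only a bounded set of small primes could conceivably carry such classes, and they cannot handle arbitrarily large $K$. More fundamentally, no purely local argument can ever certify that an integer is not a value of $F$, since being a value is not a congruence condition.

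The paper's route repairs exactly this. One works with the class $0\bmod p$, where $\sol_\sform(0,p)=p^2+2\Re\Hp\,(p^{3/2}-p^{1/2})$ with $\abs{\Hp}=1$ (\cref{erre:modp:3}); interpreting the Jacobi sum as a Hecke character and using equidistribution (PNT for Hecke $L$-functions plus Erd\H{o}s--Tur\'an) yields a positive-density set of primes with $\Re\Hp\leq-\beta/2$, i.e.\ a relative saving $\asymp p^{-1/2}$ --- a point your sketch also leaves unaddressed, since a priori the $O(p^{3/2})$ correction could be positive. These savings are then multiplied over $\sim\#\P/K$ primes per slot via CRT to make $\sol_\sform(m+k,M)\leq M^2/(2K)$: small, but never zero. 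The essential step your proposal omits is the global one: because $F$ is positive definite, a box-counting (Maier-matrix) argument (\cref{ap:density,ap:interval}) converts these small local counts into the statement that some block $m+(h-1)M+[1,K]$ below $N$ contains no actual value of $F$. Finally, your bookkeeping is internally inconsistent: since the saving per prime is only $p^{-1/2}$, one needs $T\asymp K^2(\log K)^4$ and hence $\log M\asymp K^2(\log K)^4$ (this is what produces $K\ll\sqrt{\log N}/(\log\log N)^2$), whereas your claim $\log M\ll K(\log K)^2$ would rearrange to $K\ll\log N/(\log\log N)^2$, not to the stated bound.
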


Second, for almost all quadrinomial positive-definite biquadratic diagonal forms: 

\begin{theorem}\label{thm:main:intro:4}
Let $\form$ be as above, with $\s=4$, and suppose that $\form$ is not equal to $a (c_1x_1)^4+b (c_2 x_2)^4+4 a (c_3x_3)^4+4b(c_4x_4)^4$, for some $a,b,c_1,c_2,c_3,c_4\in\N_+$, up to a permutation of the variables. 
Then there is a constant $\kx\great 0$ such that for all integers $N,K$ satisfying $N\great e^{e^{e^e}}$, $K\geq 2$ and $K\less\kx\frac{\log\log\log N}{\log\log\log\log N}$ there are gaps of length at least $K$  between the values of $\form$ less than $N$.
\end{theorem}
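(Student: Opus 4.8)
The plan is to construct, for each target gap length $K$, an explicit arithmetic progression $n+1,\ld,n+K$ whose members all fail to be values of $\form$, and then to control how large $N$ must be so that such an $n$ exists below $N$. The standard mechanism for producing local obstructions is to exploit primes $p$ for which the map $x\mapsto x^{\s}$ on $\FF_p$ is far from surjective; concretely, when $p\equiv 1\pmod \s$ the image of the $\s$-th power map in $\FF_p^\times$ has index $\s$, so a diagonal form in $\s$ variables reduced mod $p$ can miss a positive proportion of residue classes, unless there is a conspiracy among the coefficients. The excluded family in Theorem~\ref{thm:main:intro:4} is precisely the shape of that conspiracy for $\s=4$: when the coefficients pair up as $(a,b,4a,4b)$ the extra relation $x^4+4y^4=(x^2+2xy+2y^2)(x^2-2xy+2y^2)$ makes the form locally surjective at too many primes, destroying the obstruction. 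So the first step is a careful local analysis: for each admissible prime $p\equiv 1\pmod\s$, determine the set $\Set_p\subseteq \FF_p$ of residues represented by $\form$, show $\Set_p\neq \FF_p$, and quantify $|\FF_p\setminus\Set_p|$ from below (a fixed positive fraction, by Weil-type character sum bounds of the form appearing via $\Hp,\Kp$ in the notation the paper sets up).

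\medskip

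Next I would assemble these local obstructions by a sieve/covering argument. Having fixed $K$, I want to choose distinct primes $p_1,\ld,p_K\equiv 1\pmod\s$, each large enough that $\Set_{p_i}$ is a proper subset, and then use the Chinese Remainder Theorem to pick a residue $n \pmod{p_1\cdots p_K}$ with $n+i \equiv c_i \pmod{p_i}$ for some forbidden residue $c_i\notin \Set_{p_i}$, for every $i=1,\ld,K$. Any nonnegative integer $m$ in this CRT class then has $m+i$ not a value of $\form$ for each $i$, because $m+i$ is already not a value of $\form$ modulo $p_i$. The smallest such nonnegative $m$ is at most $\prod_{i=1}^{K}p_i$, so to guarantee a gap below $N$ I need $\prod_i p_i < N$ (minus a harmless correction for the requirement $m+K<N$ and $m\geq 0$). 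This is where the sizes in the two theorems come from: for $\s=3$ the primes $\equiv 1\pmod 3$ available with the required property near the bottom of the range force $\prod_i p_i$ to grow like $\exp((\text{const})\,K^2(\log K)^{?})$ after optimization, yielding $K\ll \sqrt{\log N}/(\log\log N)^2$; for $\s=4$ the relevant primes are much sparser (one must avoid the arithmetic of the excluded quartic family at each prime, and the "good" primes themselves satisfy congruence restrictions that make the $i$-th usable prime roughly doubly-exponentially large in $i$), so $\prod_i p_i$ is of size a tower, giving the triple-log bound. I would make this precise by invoking the quantitative results on the density of good primes established earlier in the paper (the sets $\Pquattro$, the main term $\Main$ versus the error $\Err$, and the Maier-type matrix method $\maier$ whose notation is already set up).

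\medskip

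Concretely the steps, in order, are: (i) reduce Theorem~\ref{thm:main:intro:4} to the $\s=4$ case of a uniform statement about the existence of primes $q\equiv 1\pmod 4$ with $\form$ not surjective mod $q$ and with $|\FF_q\setminus\Set_q|\gg q$, avoiding only the stated exceptional shape — this is the content of the earlier sections on $\Hpp,\Kpp,\chipp,\chippl$; (ii) prove a lower bound, of the right order, for the number of such "obstruction primes" up to $x$, via the character-sum expansion of the count of representations and the quartic-residue symbol, isolating the exceptional $(a,b,4a,4b)$ family as the unique case where the main term of $|\FF_q\setminus\Set_q|$ vanishes identically; (iii) run the CRT/covering construction above with $K$ such obstruction primes, optimizing their sizes; (iv) bound the modulus $\prod_i q_i$ and solve for the largest $K$ with $\prod_i q_i < N$, which is $\asymp \log\log\log N/\log\log\log\log N$; (v) verify the hypothesis $N > e^{e^{e^e}}$ is exactly what is needed to make the construction nonvacuous (i.e. to ensure $K\geq 2$ is permitted). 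The main obstacle I expect is step (ii): showing the main term of the obstruction count is nonzero off the exceptional locus requires a genuine computation with quartic Gauss and Jacobi sums (the interplay of $\chipp=\chi_{4,q}$ and $\chippl=\chi_{2,q}$), and pinning down that the vanishing happens \emph{only} for the $(a,b,4a,4b)$ shape — not for some larger family one might have missed — is the delicate algebraic heart of the argument; the sieve and counting parts (iii)--(v) are then comparatively routine optimizations.
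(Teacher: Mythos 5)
There is a genuine gap at the core of your construction. Your plan selects, for each $i$, a prime $p_i\equiv 1\pmod 4$ and a residue $c_i$ that is \emph{not} represented by $\form$ modulo $p_i$, and then concludes by CRT that \emph{every} integer $m$ in the resulting class has $m+i$ not a value of $\form$. But for a diagonal form of degree $4$ in $4$ variables no such forbidden residues exist once $p$ is large: by the Weil--Deligne bound (\cref{erre:1}) one has $\sol_\sform(m,p)\geq p^{3}-81\,p^{3/2}>0$ for every $m\not\equiv 0\pmod p$ and every prime $p\notin\Sigma_\sform$ beyond an absolute bound, and $\sol_\sform(0,p)>0$ as well by \cref{erre:modp:4}. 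The index-$4$ image of the quartic power map forces non-surjectivity only when there are few variables; with $4$ variables the reduction of $\form$ hits every class modulo every large prime, so a covering-congruence argument can never produce arbitrarily long gaps here (small prime powers give only finitely many obstructions, hence bounded gap lengths). This is precisely why the paper does something weaker and subtler: it takes $M$ to be the product of \emph{all} good primes up to a threshold $T$, partitioned into $K$ blocks, so that each class $m+k\bmod M$ is merely \emph{underrepresented}, $\sol_\sform(m+k,M)\leq \tfrac{1}{2K}M^{3}$ (\cref{main:modM}, \cref{tau}), and then a Maier-matrix double count over the translates of the interval (\cref{ap:density}, \cref{ap:interval}), which uses positive-definiteness to compare representations with congruence solutions, shows that \emph{some} translate $m+(h-1)M+[1,K]$ contains no value at all. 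Your passing mention of the Maier matrix does not repair this, because your step (iii) asserts the gap for every $m$ in the CRT class, which is false.

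Two further inaccuracies are worth flagging. First, the exceptional family $a(c_1x_1)^4+b(c_2x_2)^4+4a(c_3x_3)^4+4b(c_4x_4)^4$ is not excluded because it is ``locally surjective at too many primes'' (all the forms under consideration are locally surjective at large primes); it is excluded because for it $\sol_\sform(0,q)\geq q^{3}$ for every prime $q\notin\Sigma_\sform$ (\cref{thm:exceptional:iff}), so the zero class is never underrepresented and the savings needed in Step~2 cannot be accumulated. Second, the triple logarithm does not come from the $i$-th usable prime being doubly exponential in $i$: the good primes $\P_\sform$ have positive density, but each contributes a saving of only about $\beta/p$ to $\log\bigl(\sol_\sform(m+k,M)/M^{3}\bigr)$, so one needs $\sum_{p\in\P_\sform\cap[1,T]}p^{-1}\asymp\log\log T\gg K\log K$, i.e.\ $T\geq\exp(\exp(\gamma K\log K))$; since $\log N\asymp\log M^{4}\asymp T$ by \cref{asymptotic}, this yields $K\ll \log\log\log N/\log\log\log\log N$, as in \cref{thm:main}.
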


Notice that in both theorems the upper bound on $K$ goes to infinity with $N$, 
but the growth is much faster when $\s=3$.  
We refer to \cref{rmk:3vs4} for some explanation. 
In  \cref{thm:main} we show more precisely that, for a suitable $\kappa_F>0$ and the same hypotheses, there exist at least $c(\sform,K) N$ gaps of length $K$ between the values of $\form$ less than $N$, 
where $c(\sform,K)\great 0$ is independent of $N$. 

%
%

The above theorems include the important special cases $\form= x_1^3+x_2^3+x_3^3$ and $\form=x_1^4+x_2^4+x_3^4+x_4^4$. 
The values of these forms are often studied in connection with Waring's problem  \cite{survey}, which more generally concerns the representability of natural numbers as sums of perfect powers. 
Moreover, the results of the present paper concerning these two special cases have been used in a crucial way to improve some results of Bradshaw \cite{Bradshaw} in regard to values of cubic and biquadratic theta series \cite{ghidelli:theta}. 

On the other hand \cref{thm:main:intro:4} doesn't apply to some biquadratic forms such as $\form=x_1^4 + x_2^4+4x_3^4+4x_4^4$. 
We show that these exceptions are characterized among all biquadratic diagonal forms by a local property (see \cref{thm:exceptional:iff}). 
This is further discussed in \cref{rmk:5andexceptional}.

We now compare the above results with the literature. 
When $\s=2$ Richards \cite{squares} proved, with an ingenious elementary proof, that there are gaps of length at least $\gamma_\sform\log N$ between the values of $\form$, for some constant $\gamma_\sform\great 0$. 
It is an important open-problem to estimate sharply the order of growth of the gaps between the values of $\form=x_1^2+x_2^2$. 
However when $\s\geq 3$ our knowledge is even weaker. 
For example, if $\form=x_1^3+x_2^3+x_3^3$ we only know by an elementary greedy argument \cite{gaps} that for $N$ large enough there are no gaps of size greater than $3^{19/9} N^{8/27}(1+o(1))$, among the values of $\form$ less than $N$. 
On the other hand, working out the predictions of the probabilistic models, we should expect the existence of gaps of length as large as $O(\log N/\log \log N)$, for all $\s\geq 3$. 

In the following section we expose our strategy towards the proofs of \cref{thm:main:intro:3,thm:main:intro:4}. 
As it will be clear, the same method can be used to prove the existence of arbitrarily long gaps between the values of other polynomials, provided they satisfy a certain local property (see ``Step 2'' below). 
Following a suggestion of Wooley, we are going to treat in a future publication the case of non-homogeneous diagonal forms such as $x_1^2+x_2^3+x_3^7+x_4^{42}$.  


%
%

\section{Detecting the existence of long gaps - the method}\label{sec:plan}

Let $\s,\form$ be as in \cref{thm:main:intro:3,thm:main:intro:4}.  
Let $\Set\subseteq\N$ be the set of values of $\form$ and for all $n\in\N$ let $\erre_\sform(n):=\#\{\x\in\N^\s:\ \form = n\}$ be the number of representations of $n$ as a value of $\form$. 
Moreover, for all $M\in\N_+$ and $m\in\Z$ let $\sol_\sform(m,M)$ denote the number of solutions $\x\in(\Z/M\Z)^\s$ to the congruence $\form\equiv m\pmod M$. 
Our strategy to find gaps between the values of $\form$ consists of three parts:
\begin{description}
\item[Step 1] 
Estimate $\sol_\sform(m,p)$ for prime numbers $p$, with special attention to the case $m=0$. 
In particular we find a set $\P_\sform$ of primes and positive real numbers $\{\epsilon_p\}_{p\in\P_\sform}$ with the following properties: 
$\sol_\sform(0,p)\leq p^{\s-1}(1-\epsilon_p)$ for all $p\in\P_\sform$, and $\sum_{p\in\P_\sform}\epsilon_p = +\infty$.  
\item[Step 2] 
Show that for every $\epsilon\great 0$ and $K\in\N_+$ there are $m,M\in\N$ with $0\leq m\less M-K$ such that $\sol_\sform(m+k,M)\less \epsilon M^{\s-1}$ for all $k=1,\ld,K$.
\item[Step 3] 
Form the intersection of $\Set$ with a set of the form \[\maier=\{m+k + (h-1)M:\ 1\leq k\leq K,\  1\leq h\leq H\}.\] 
If $M$ and $m$ are obtained from Step 2 with $\epsilon\less \frac 1 K$, and $H$ is suitably chosen, we find that the cardinality of the intersection is strictly less than $H$. 
This implies that $m+(h_0-1)M+ [1,K]$ is a gap between the values of $\form$, for some $h_0\leq H$.
\end{description}

The underlying idea is the following: suppose that the number of solutions to the congruence $\form\equiv m \pmod M$ is significantly smaller than the ``expected'' number $M^{\s-1}$; then a number of the form $m+ (h-1)M$ has a low chance to be a value of $\form$, if $h$ is randomly chosen. In other words, these numbers are likely to be in a gap of $\form$.
To make this observation rigorous in Step 3, we require that the form $\form$ is positive-definite.

The first step constitutes the bulk of this article, and occupies all the sections from 3 to 7. 
Steps 2 and 3 are performed in \cref{sec:main}, together with the derivations of the quantitative estimates announced in \cref{sec:intro}. 
We now give more details about the strategy outlined above, in the case of biquadratic diagonal forms. 
The case of cubic forms is analogous: it is only slightly more delicate in Step 2, and overall considerably easier in Step 1. 
See also \cref{rmk:easy} for some variants of our proof.

\subsection{Step 1}\label{sec:plan:1}
 
Let $\s=4$, then fix $\form$ as in \cref{thm:main:intro:4}, and let $\Sigma_\sform$ be the set of primes that divide some coefficient of $\form$.
The outcome of Step 1 is the following.
\begin{proposition}\label{prop:step1}
For all $m\in\Z$ and all prime $p\equiv 1 \pmod 4$ with $p\not\in\Sigma_\sform$, we have 
\[
\sol_\sform(m,p)\leq p^3(1+81 p^{-3/2}).
\]
Moreover for all $\beta\in (0,1)$ there is a set of primes $\P_\sform$ with positive relative density $\delta\great 0$ such that for all $p\in\P_\sform$ we have $p\equiv 1\pmod 4$ and:
\begin{equation}\label{eq:step1:0}
\sol_\sform(0,p)\leq p^3\left(1-\beta p^{-1}\right).
\end{equation}
\end{proposition}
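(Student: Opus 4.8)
The plan is to analyze the number of solutions to $\sform(\x)\equiv m\pmod p$ via character sums. Writing $N_p(m):=\sol_\sform(m,p)$, the orthogonality relations give
\[
N_p(m)=p^{-1}\sum_{t\in\FF_p}\prod_{i=1}^4\left(\sum_{x\in\FF_p}e_p(t\, a_i x^4)\right)e_p(-tm),
\]
where $e_p(\cdot)=e^{2\pi i(\cdot)/p}$. The $t=0$ term contributes exactly $p^3$. For $t\neq 0$, since $p\equiv 1\pmod 4$, each inner sum is a quartic Gauss-type sum $\sum_x e_p(t a_i x^4)$, which can be expanded in terms of the multiplicative characters of order dividing $4$: namely $\sum_x e_p(cx^4)=\sum_{\chi^4=\chi_0,\ \chi\neq\chi_0}\chi(c)\tau(\bar\chi) + \big(\text{the }c=0\text{ correction}\big)$, where $\tau$ denotes a Gauss sum, so each such sum has absolute value at most $3\sqrt p$ (three nontrivial characters of order $1,2,4,4$... more precisely the three nontrivial characters among those of order dividing $4$). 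Multiplying four of these and summing the trivial bound over the $p-1$ values of $t\neq 0$ yields $|N_p(m)-p^3|\le p^{-1}\cdot(p-1)\cdot(3\sqrt p)^4 < 81 p^{3/2}$, which is the first assertion (the restriction $p\notin\Sigma_\sform$ guarantees the $a_i$ are units, so the substitution $x\mapsto$ scaled variable is legitimate and no coefficient vanishes mod $p$).

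For the refined bound \eqref{eq:step1:0} with $m=0$, the trivial estimate is not enough; I would instead extract the main oscillatory term and show it is, on a positive-density set of primes, \emph{negative} of size $\gg p^{-1}\cdot p\cdot p^{3/2}\cdot(\text{const})$ relative to... more carefully: isolate from $N_p(0)-p^3$ the contribution of the ``diagonal'' characters, i.e. the terms where the product $\prod_i\chi_i$ of the four order-dividing-$4$ characters is trivial. Using the standard evaluation of Jacobi sums and the relation $\tau(\chi)\tau(\bar\chi)=\chi(-1)p$, those diagonal terms simplify to an explicit expression — essentially a short character sum in the coefficients $a_i$ evaluated against the quartic residue symbol mod $p$ — whose sign and size are governed by $\Re H_{\sform,p}$ (the quantity $\rhop$ introduced in the paper's notation). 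The off-diagonal terms, where $\prod_i\chi_i\neq\chi_0$, again contribute $O(p^{3/2})$ by the trivial bound, hence are absorbed. So it remains to produce a positive-density set of primes $p\equiv1\pmod4$ on which $\Re H_{\sform,p}$ is bounded below by a positive constant; this is where the hypothesis that $\sform$ is \emph{not} the exceptional form enters — for the exceptional form the relevant Jacobi-sum combination degenerates and one cannot force the correct sign.

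The main obstacle, and the heart of Step 1, is this last point: establishing the positive-density lower bound for $\Re H_{\sform,p}$. The strategy will be a Chebotarev/Kummer-type argument — the value of $H_{\sform,p}$ is determined by the splitting of $p$ in a fixed number field (a cyclotomic extension $\Q(\zeta_4)=\Q(i)$ adjoined with certain radicals of the coefficients $a_i$), together with the choice of a primitive quartic character, and the Chebotarev density theorem then guarantees a positive-density set of primes realizing any given admissible Frobenius class. One must (i) identify precisely which Galois data makes $\Re H_{\sform,p}\ge\beta p^{-1}\cdot p=\beta$ — wait, the bound in \eqref{eq:step1:0} is $p^3(1-\beta p^{-1})=p^3-\beta p^2$, so in fact we need the diagonal contribution to be $\le -\beta p^2$, i.e.\ of order $p^2$, not $p^{3/2}$; this forces us to look not at single quartic Gauss sums but at products where \emph{two} pairs of characters pair up to give a factor $p$ each, leaving genuine $p^2$-size terms — and (ii) check that the non-exceptional hypothesis is exactly what makes such a negative $p^2$-term survive on a positive-density set. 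Handling the several sub-cases according to how the four characters pair up (and the role of the factor-of-$4$ relation among coefficients that defines the exceptional family) is the delicate bookkeeping I expect to occupy the bulk of the argument; the archimedean/Chebotarev input itself is standard once the algebraic identity pinning down the sign is in place.
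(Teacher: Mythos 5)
Two things go wrong, one in each half of your plan.

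First, your derivation of the bound $p^3(1+81p^{-3/2})$ does not close. After isolating $t=0$ you bound each complete quartic sum by $3\sqrt p$ and then sum trivially over the $p-1$ values of $t\neq 0$; that gives $p^{-1}(p-1)(3\sqrt p)^4=81\,p(p-1)$, which is of size $81p^2$, not $81p^{3/2}$ — the displayed inequality $p^{-1}(p-1)(3\sqrt p)^4<81p^{3/2}$ is false for every $p\geq 3$. To recover the exponent $3/2$ you must not bound trivially in $t$: expand each inner sum over the three nontrivial characters of order dividing $4$, and observe that the resulting $t$-sum $\sum_{t\neq 0}\bar\chi_1\cdots\bar\chi_4(t)\,e^{-2\pi i tm/p}$ is itself a Gauss sum of modulus $\sqrt p$ when $\chi_1\cdots\chi_4$ is nontrivial (and is $-1$ or $p-1$ when it is trivial), so that for $p\nmid m$ one gets at most $3^4=81$ terms each of modulus at most $p^{3/2}$. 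This is exactly the Jacobi-sum estimate of \cref{erre:1} (Ireland--Rosen / Deligne--Weil) that the paper invokes; note also that when $p\mid m$ the tuples with trivial product contribute terms of genuine size $p^2$, so that case is not covered by your computation at all.

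Second, and more seriously, the mechanism you propose for \eqref{eq:step1:0} is the wrong one. You correctly see that the diagonal tuples ($\chi_1\chi_2\chi_3\chi_4$ trivial) must produce a negative contribution of size $\beta p^2$, but you then assert that the relevant quantity $H_{\sform,p}$ ``is determined by the splitting of $p$ in a fixed number field'' and that Chebotarev supplies the positive-density set. The diagonal contribution is $(2\Re\Hpp+\Kpp)\,q(q-1)$ as in \cref{erre:modp:4}, and these two pieces behave completely differently: $\Kpp\in[-7,19]$ is an integer determined by the quartic residue symbols $\chi_{4,q}(a_i)$ and $\chi_{4,q}(-1)$, hence by the Frobenius of $q$ in the fixed Kummer extension — this is where Chebotarev and the non-exceptionality hypothesis enter (forcing $\Kpp\leq 1$ on a positive-density set). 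But $\Hpp=\bar\chi_{4,q}(a_1a_2a_3a_4)\pi_{4,q}^2/q$ carries the angle of the Jacobi sum, i.e.\ the value of an \emph{infinite-order} Hecke character; it is not locally determined by splitting in any fixed finite extension, and since $\Kpp$ can at best be pushed down to $1$ while $\beta$ may be arbitrarily close to $1$, one must force $2\Re\Hpp<-1-\beta$, i.e.\ near-extreme Jacobi-sum angles, on a positive proportion of primes. That requires an equidistribution theorem for these angles (in the paper: Weil's theorem that Jacobi sums are Gr\"ossencharacters, a prime number theorem for Hecke $L$-functions, and Erd\H{o}s--Tur\'an, carried out along the Chebotarev-defined set of primes). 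This analytic ingredient — the actual heart of Step 1 — is absent from your plan, and incidentally your stated goal ($\Re H_{\sform,p}$ bounded \emph{below} by a positive constant) has the sign backwards: the combination must be made negative.
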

The first upper estimate for $\sol_\sform(m,p)$ is a consequence of the Deligne-Weil bounds \cite[Chapter 4.5]{Serre} (see \cref{erre:1} below). 
The second result for $\sol_\sform(0,p)$ comes from an exact formula of the form
\begin{equation}\label{eq:step1:exact}
\sol_\sform(0,p)= p^3 + p (p-1) (2\Re \Hp + \Kp)
\end{equation}
which is established in \cref{sec:char,sec:diagonal} using the theory of cyclotomy, more precisely with Gauss and Jacobi sums \cite{book:GJS} \cite[Sec. 8]{Ireland}. 
Here $\Hp$ and $\Kp$ denote explicit character sums modulo $p$, where $p\equiv 1 \pmod 4$ is prime and $p\not\in\Sigma_\sform$. 
Moreover, $\Hp$ is a complex number of absolute value 1 well-defined up to conjugation, and $\Kp$ is an integer satisfying $-7\leq \Kp\leq 19$.
The formula \eqref{eq:step1:exact} is related to the Sato-Tate distribution \cite[Chapter 8]{Serre} of the affine scheme associated to $\form$: the continuous part of the Sato-Tate distribution corresponds to $\Hp$, and the discrete part to $\Kp$.

By a theorem of Weil, we are able to interpret $\Hp$ as a Hecke character of infinite order and absolute value 1. 
Using the theory of Hecke L-functions, we prove in \cref{sec:equidistribution} that $\Hp$ equidistributes on the unit circle (up to conjugation) as $p\to\infty$. In particular, for all $\beta\in(0,1)$ we have $2\Re \Hp\less -1-\beta$ for a positive proportion of the primes.
 
On the other hand, in \cref{sec:kummer} we relate $\Kp$ to the Kummer extension $L/K$, where $K:=\Q(i)$, and $L=K(\sqrt[4]{a_1},\ld,\sqrt[4]{a_4},\sqrt[4]{-1})$ is generated by the fourth roots of -1 and of the coefficients of $\form$. 
By Chebotarev's theorem and Kummer's theory, we are able to compute the possible values of $\Kp$ explicitly from the characters of $\op{Gal}(L/K)$, which is a finite abelian group of order at most 512. 
In particular we can show that $\Kp\leq 1$ for a positive proportion of the primes, if $\form\neq\  a (c_1x_1)^4+b (c_2 x_2)^4+4 a (c_3x_3)^4+4b(c_4x_4)^4$ up to a permutation of the variables. 
In fact, this hypothesis on $\form$ is necessary to have $\Kp\leq 1$, as we show in \cref{sec:exceptional} by an elementary argument.

\subsection{Step~2}\label{sec:plan:2}

In order to construct $M$ and $m$, we start by choosing suitable disjoint finite subsets $\P_1,\ld,\P_K$ of $\P_\sform$ and we form their union $\P:=\P_1\cup\ld\cup\P_K$. Then, we let $M$ be the (squarefree) product of all $p\in\P$, and we take $m$ so that $m+k\equiv 0\pmod p$ for all $k\leq K$ and all $p\in\P_k$. 
In this way, by the Chinese Remainder Theorem and the estimates of Step 1, we have
\[ \begin{aligned}
 \log\left(\frac {\sol_\sform(m+k,M)}{M^3}   \right) %
   &\leq    \log\left(  %
                       \prod_{p\in\P\setminus\P_k} \left(1+\frac{81}{p^{3/2}}\right) %
                      \prod_{p\in\P_k} \left(1-\frac{\beta}{p}\right) %
                \right)   \\
    & \leq  81 \sum_{p\in\P\setminus\P_k}\frac {1} {p^{3/2}}        -      \beta \sum_{p\in\P_k} \frac {1} {p} \\
\end{aligned}  \]
for all $1\leq k\leq K$. 
We notice that the series $\sum_{\text{$p$}} p^{-3/2}$ ranging over all primes is bounded above by an absolute constant $C_1$. 
On the other hand, since $\P_\sform$ has positive density, we have that $\sum_{p\in\P_\sform} p^{-1}$ diverges, and therefore it is possible to choose $\P_1,\ld,\P_K$ so that $\sol_\sform(m+k, M)\leq \epsilon M^3$, for all $k\leq K$ and for any given $\epsilon\great 0$. 

\subsection{Step~3}\label{sec:plan:3}

The conclusion is now obtained by a simple double-counting technique that is sometimes known as the Maier matrix method \cite{Maiermatrix}. 
Fix $K\in\N_+$ and $0\less\epsilon\less \frac 1 K$, and construct $M,m$ as in Step 2, with $0\leq m\less M-K$.  
Let $\maier=\{m+k + (h-1)M:\ 1\leq k\leq K,\  1\leq h\leq M^3\}$.  
Since $\form\less M^4$ implies $x_1,x_2,x_3,x_4\in\{0,\ld,M-1\}$, we have:
\[
\begin{aligned}
 \#\Set \cap \maier &\leq \sum_{k=1}^K\sum_{h=1}^{M^3}\#\{\x\in\N^4:\ \form= m+k+(h-1) M\} \\
 & \leq
 \sum_{k=1}^K\#\{\x\in(\Z/M\Z)^4:\ \form\equiv m+k\pmod M\},
\end {aligned}
\]
which is equal to $\sum_{k=1}^K \sol_\sform(m+k,M)$, and so it is at most $K\epsilon M^3\less M^3$ by Step 2. 
On the other hand, suppose by contradiction that for all $h\leq M^3$ the interval $m+[1,K] +(h-1)M$ contains a value of $\form$. 
Then $\# \Set \cap \maier$ contains at least $M^3$ elements, and this is a contradiction.

\begin{remark}
A modification of Steps 1 and 2 proves the existence of residue classes
$m\pmod M$ that satisfy $\sol_\sform(m,M)\great c M^{\s-1}$ for arbitrarily large $c\great 0$. 
This can be used to show (see \cite[Chapter IV.1]{Hooley}) that for any given $A\great 0$ there exists $n\in\N_+$ such that the equation $\form=n$ has at least $A$ solutions $\x\in\N^\s$.
\end{remark}

\begin{remark}\label{rmk:3vs4}
 If $s=3$ we have an analog of \eqref{eq:step1:0} of the form
\begin{equation*}
\sol_\sform(0,p)\leq p^2\left(1-\beta p^{-1/2}\right),
\end{equation*}
so Step 1 is fulfilled with $\epsilon_p\asymp p^{-1/2}$. 
Then the series $\sum_{p\in\mcl P_F} \epsilon_p \asymp \sum_{p\in\mcl P_F} p^{-1/2}$ diverges to infinity much faster than the series $\sum_{p\in\mcl P_F} p^{-1}$ which appears in Step 2 above, in the case $s=4$.  
This is the technical reason that explains why the estimate on $K$ in our main result \cref{thm:main:intro:3} for cubic forms is much better than the one for biquadratic forms in \cref{thm:main:intro:4}.
\end{remark}

\begin{remark}\label{rmk:5andexceptional}
 When $s\geq 5$, it is well known \cite{density1} that
\begin{equation*}
\sol_\sform(m,q) =  q^{s-1}\left(1+ O(q^{-3/2})\right)
\end{equation*}
for every power of a prime $q=p^\nu$ and every residue class $m\bmod q$. 
Reasoning as in Step 2, since the series $\sum_{q\in\N_+} q^{-3/2}$ converges, we see that there exist positive constants $c_0,c_1$ such that 
\begin{equation*}
 c_0 M^{s-1} \leq r_F(m,M) \leq c_1 M^{s-1}
\end{equation*}
for all $M\in\N_+$ and all $m\bmod M$. 
This explains why our approach does not yield arbitrarily long gaps between the values of diagonal forms in 5 or more variables. 
Step 2 also fails when $s=4$ and 
\begin{equation*}
  \form = a (c_1x_1)^4+b (c_2 x_2)^4+4 a (c_3x_3)^4+4b(c_4x_4)^4 
\end{equation*}
for some $a,b,c_1,c_2,c_3,c_4\in\N_+$, because any such form satisfies $M^{3} \leq r_F(0,M)$ for all odd squarefree moduli $M$ (see \cref{sec:exceptional}). 
Taking into account higher powers of primes and the residue classes other than zero, it is in fact possible to prove that $c M^{3} \leq r_F(m,M)$ for all $m,M$ with a constant $c=c(F)>0$. 
\end{remark}

%
%

%
%

\section{Multiplicative characters and diagonal congruences}\label{sec:char}

\subsection{Characters and character sums}\label{sec:char:def}

If $\FF$ is a field we denote by $\FF^\times:=\FF\setminus\{0\}$ the multiplicative group of its nonzero elements.
A multiplicative character of $\FF$ is by definition a group homomorphism $\chi\in \Hom(\FF^\times,\C^\times)$. 
We denote by $\uno$ the trivial character, i.e. the one satisfying $\uno(t)=1$ for all $t\in\FF^\times$.
If $\chi$ is a nontrivial multiplicative character of $\FF$, it is customary to declare $\chi(0)=0$, thus extending $\chi$ to a map $\chi:\FF\to \C$. 
Given nontrivial multiplicative characters $\chi_1,\ld,\chi_\ell$ of a finite field $\FF$ we consider the generalized Jacobi sum
\begin{equation}\label{def:Jacobi}
J(\chi_1,\ld,\chi_\ell) :=\! \!\sum_{\substack{t_1,\ld,t_\ell\in\FF\\ t_1+\dots+t_\ell = 1}}\prod_{i=1}^\ell \chi_i(t_i).
\end{equation}
and we let $J_0(\chi_1,\ld,\chi_\ell)$ be defined analogously, but with the sum performed over the $\ell$-tuples satisfying $t_1+\dots+t_\ell = 0$.
If $\#\FF=p$ is a prime number, then the finite field $\FF$ is canonically isomorphic to $\FF_p:=\Z/p\Z$.
For every $\s\in\N_+$ we define 
\[
\X^{(\s)}_{p}:=\{  \chi\in \Hom(\FF_p^\times,\C^\times):\ \chi^\s =\uno \text{ and } \chi\neq\uno\}
\]
to be the set of the nontrivial multiplicative characters of $\FF_p$ with order dividing $\s$.
We observe that $\FF_p^\times$ is a cyclic group of order $p-1$, so every multiplicative character $\chi$ of $\FF_p$ is determined by its value at the multiplicative generators modulo $p$, and $\#\X^{(\s)}_p = \op{gcd}(\s,p-1) - 1$.
Since the complex exponential function is periodic with period $2\pi i$, the map $x\mapsto e^{\frac{2\pi i x}{p}}$ gives a well-defined additive character of $\FF_p$.
If $\chi$ is a multiplicative character of $\FF_p$, its associated Gauss sum is
\[
G(\chi) := \sum_{t\in\FF_p} \chi(t)e^{\frac{2\pi i t}{p}}.
\]

\subsection{Cubic and biquadratic power residue characters}\label{sec:char:power}

For  $\s\in\N_+$, let $\zeta_\s:=e^{\frac{2\pi i}{\s}}$ and let $\mu_\s:=\{\zeta_\s^i:\ 0\leq i \less \s\}\subseteq \C$.
Let $\K$ be a number field containing $\mu_\s$, let $\O_K$ be its ring of integers, and let $\p$ be a prime ideal of $\O_K$ not dividing $\s$. The discriminant of $X^\s -1$ is divisible only by the primes dividing $\s$, and so the elements $\zeta_\s^i\in\mu_\s$ are pairwise incongruent modulo $\p$. Thus $\mu_\s \bmod \p$ has cardinality $\s$, and is the complete set of $\s$-th roots of unity in the residue field $\O_K/\p$. This implies that $\s\divides N\p-1$, where $N\p:=\#(\O_K/\p)$ is the norm of $\p$.
From this we conclude that for every $a\in \O_K$ with $a\not\in\p$ there is a unique $\chi_{\s,\p}(a)\in\mu_\s$, also denoted by $\left(\frac {a} {\p}\right)_\s$ (see \cref{def:symbol} below), such that
\[
\chi_{\s,\p}(a) \equiv a^{\frac{N\p-1}{\s}} \pmod \p.
\]
The multiplicative character $\chi_{\s,\p}$ of $\O_K/\p$ is the $\s$-th power residue character modulo $\p$. 
Fix now $\s\in\{3,4\}$. 
The ring $\Z[\zeta_\s]$ is an Euclidean domain (it is the ring of Eisenstein integers for $\s=3$ and the ring of Gaussian integers for $\s=4$), and coincides with the ring of integers of the quadratic number field $\Q(\zeta_\s)$.
If $p$ is a prime number satisfying $p\equiv 1\pmod s$, then it splits in $\Z[\zeta_\s]$.
We choose an arbitrary prime $\p$ above $p$, so that $p\Z[\zeta_\s]=\p\,\wb\p$, and we define $\chis:=\chi_{\s,\p}$.
We notice that $N\p=p$, and so we may, as we will, consider $\chis$ as a multiplicative character of $\FF_p$. 
It is easy to see that the order of $\chis$ is exactly $\s$.
Finally, we let for brevity
\begin{equation}\label{eq:pi}
\pi_{\s,p}:=J(\chis,\chis).
\end{equation}
It is well-known \cite[Sec.9.4, Lemma 1, Proposition 9.9.4]{Ireland} that $\p=(\pi_{\s,p})$ and that $p=\pi_{\s,p}\wb{\pi}_{\s,p}$.

\subsection{The number of solutions of diagonal congruences}\label{sec:char:diagonal}

Let $s,k\in \N_+$ and fix a diagonal form $\form= a_1 x_1^k + \ld + a_\s x_{\s}^k$ of degree $k$ in $\s$ variables, with nonzero integer coefficients $a_1,\ld,a_\s\in\Z\setminus\{0\}$.
Let $\Sigma_\sform$ be the (finite) set of primes dividing $a_1\cdots a_\s$.
For all $M\in\N_+$ and $m\in\Z$ we define $\sol_\sform(m,M):=\#\Sol_\sform(m,M)$, where
\[
\Sol_\sform(m,M) :=\{\x\in(\Z/M\Z)^\s :\ \form \equiv m \pmod M\}.
\]
In other words, we count the solutions of the congruence $\form\equiv m \bmod M$.
A classical application of the Chinese Remainder Theorem shows that the function $\sol_\sform(m,M)$ is multiplicative in its second variable.  This allows us to reduce the computation of $\sol_\sform(m,M)$ to the case of prime moduli, if $M$ is squarefree. 
When $p$ is prime and $m$ is not divisible by $p$ we content ourselves with classical estimates for $\sol_\sform(m,p)$.
On the other hand, for $\sol_\sform(0,p)$ we will use an explicit computation in terms of modified Jacobi sums, which in turn can be computed via Gauss sums.
\begin{lemma}\label{erre:multiplicative}
Let $m\in\Z$ and $M=\prod_{i=1}^\ell p_i$ for distinct primes $p_1,\ld,p_\ell$. 
Then
\begin{equation}\label{eq:TCR}
\sol_\sform(m,M) = \prod_{i=1}^\ell\sol_\sform(m,p_i).
\end{equation}
\end{lemma}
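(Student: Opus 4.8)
The plan is to deduce the formula from the Chinese Remainder Theorem by tracking how diagonal forms behave under the induced ring isomorphism. Write $M=\prod_{i=1}^\ell p_i$ with the $p_i$ distinct primes. The reduction maps $\Z/M\Z\to\Z/p_i\Z$ assemble into a ring isomorphism $\phi\colon \Z/M\Z\xrightarrow{\ \sim\ }\prod_{i=1}^\ell \Z/p_i\Z$, and applying it coordinatewise gives a bijection $\Phi\colon(\Z/M\Z)^\s\xrightarrow{\ \sim\ }\prod_{i=1}^\ell(\Z/p_i\Z)^\s$, $\x\mapsto(\x^{(1)},\ld,\x^{(\ell)})$.

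First I would observe that evaluation of the diagonal form $\form=a_1x_1^k+\ld+a_\s x_\s^k$ commutes with reduction: since the coefficients $a_j$ are integers and each reduction map $\Z/M\Z\to\Z/p_i\Z$ is a ring homomorphism, we have $F(\x)\bmod p_i = F(\x^{(i)})$ in $\Z/p_i\Z$ for every $i$. Consequently, for $\x\in(\Z/M\Z)^\s$, the congruence $\form\equiv m\pmod M$ holds if and only if $F(\x)\equiv m\pmod{p_i}$ for all $i$ (because $\phi$ is injective), which by the previous sentence is equivalent to $F(\x^{(i)})\equiv m\pmod{p_i}$ for all $i$. Here $m\bmod p_i$ is unambiguous, being the image of $m\bmod M$ under reduction.

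Therefore $\Phi$ restricts to a bijection $\Sol_\sform(m,M)\xrightarrow{\ \sim\ }\prod_{i=1}^\ell\Sol_\sform(m,p_i)$. Taking cardinalities yields $\sol_\sform(m,M)=\prod_{i=1}^\ell\sol_\sform(m,p_i)$, which is \eqref{eq:TCR}. There is no real obstacle here; the only point requiring a moment's care is the compatibility of polynomial evaluation with the reduction homomorphisms, which is what makes the CRT bijection respect the congruence condition.
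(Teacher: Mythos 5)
Your proof is correct and follows essentially the same route as the paper: the componentwise reduction map furnished by the Chinese Remainder Theorem gives a bijection $\Sol_\sform(m,M)\to\prod_{i=1}^\ell\Sol_\sform(m,p_i)$, and taking cardinalities yields \eqref{eq:TCR}. Your extra care in checking that evaluation of the form commutes with the reduction homomorphisms is a fine (if routine) elaboration of the same argument.
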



\begin{proof}
We have \eqref{eq:TCR} because the Chinese Remainder Theorem provides a bijection
\[
\psi: \ \Sol_\sform(m,M) \to \Sol_\sform(m,p_1)\times\dots\times\Sol_\sform(m,p_\ell),
\]
sending an $\s$-tuple $(x_1,\ld,x_{\s})\in(\Z/M\Z)^\s$ to the sequence of $\s$-tuples $(x_1^{(i)},\ld,x_{\s}^{(i)})\in(\Z/p_i\Z)^\s$ with $1\leq i\leq \ell$ obtained by reducing modulo $p_i$ each component. 
\end{proof}

\begin{proposition}\label{erre:1}
Let $p$ be a prime number with $p\not\in\Sigma_\sform$, and let $m\in\Z$ witht $p\ndivides m$. Then
\[
\abs{\sol_\sform(m,p) - p^{\s-1}} \leq (k-1)^\s p^{\frac{\s - 1} 2},
\]
\end{proposition}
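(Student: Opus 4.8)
The plan is to reduce the estimate to a count of $\FF_p$-points on the affine hypersurface $a_1x_1^k+\dots+a_\s x_\s^k = m$ and then invoke the Deligne--Weil bounds. Since $p\notin\Sigma_\sform$, each $a_i$ is a unit modulo $p$, so after the change of variables $x_i\mapsto a_i^{-1/?}$ — more precisely, multiplying the equation by a unit and absorbing the coefficients into the variables when they are $k$-th power units, or in general arguing that the smooth projective closure is a diagonal hypersurface with the same zeta function — we may treat $\sol_\sform(m,p)$ as $\#V(\FF_p)$ for the diagonal affine variety $V\colon\ a_1x_1^k+\dots+a_\s x_\s^k = m$. The quantity $p^{\s-1}$ is exactly the expected main term, i.e.\ the count one would get from the affine space fibered over the hyperplane, so the content of the proposition is a square-root cancellation bound for the error term with an explicit constant $(k-1)^\s$.

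First I would recall the standard evaluation of diagonal sums via Gauss and Jacobi sums: writing the number of solutions of $a_1x_1^k+\dots+a_\s x_\s^k=m$ using additive characters, or equivalently expanding each $\#\{x:\ a_i x^k = t_i\}$ in terms of the multiplicative characters of order dividing $d_i:=\gcd(k,p-1)$, one obtains
\[
\sol_\sform(m,p) = p^{\s-1} + \sum (\text{products of Gauss/Jacobi sums}),
\]
where the sum ranges over tuples of nontrivial characters $\chi_i$ with $\chi_i^k=\uno$, and the $m=0$ versus $m\neq 0$ cases differ only in whether a $J$ or a $J_0$ (or an extra Gauss-sum factor) appears. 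Each such term has absolute value $p^{(\s-1)/2}$ by the classical Gauss-sum identity $|G(\chi)|=p^{1/2}$ for nontrivial $\chi$ (used, for $m\neq 0$, through $J(\chi_1,\dots,\chi_\s)=G(\chi_1)\cdots G(\chi_\s)/G(\chi_1\cdots\chi_\s)$ when $\chi_1\cdots\chi_\s\neq\uno$, with the degenerate case handled separately). The number of contributing tuples is at most $(d_1-1)\cdots(d_\s-1)\le (k-1)^\s$, since each $\chi_i$ ranges over a set of size $d_i-1\le k-1$. Combining these two bounds gives $|\sol_\sform(m,p)-p^{\s-1}|\le (k-1)^\s p^{(\s-1)/2}$, which is the claim; alternatively, I would simply cite the Deligne--Weil bound as packaged in \cite[Chapter 4.5]{Serre} for smooth diagonal hypersurfaces, which yields exactly this shape of estimate.

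The main obstacle is bookkeeping the degenerate character tuples, namely those where $\chi_1\cdots\chi_\s=\uno$ (for $m\neq 0$) or analogous coincidences: for these the Jacobi-sum/Gauss-sum identity breaks, and one must check directly that the corresponding contribution is still $O(p^{(\s-1)/2})$ and that, together with the honestly bounded terms, the total count of terms does not exceed $(k-1)^\s$. This is routine but must be done carefully so that the constant is exactly $(k-1)^\s$ rather than something larger; the cleanest route is to quote the Deligne--Weil estimate in the form already referenced in the paper, since it builds the degenerate cases into the stated bound. I would therefore present the short Gauss-sum computation as motivation and then conclude by appealing to \cite[Chapter 4.5]{Serre}, noting that $p\notin\Sigma_\sform$ guarantees the hypersurface is smooth over $\FF_p$ so that the hypotheses of the cited theorem are met.
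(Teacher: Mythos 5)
Your argument is essentially the paper's: the paper proves this by citing the case $b\neq 0$ of Ireland--Rosen (Sec.~8.7, Theorem~5), exactly the Gauss/Jacobi-sum expansion you describe, and then observes that every term (including the degenerate tuples with $\chi_1\cdots\chi_\s=\uno$, via $p^{\frac{\s}{2}-1}\leq p^{\frac{\s-1}{2}}$) is at most $p^{\frac{\s-1}{2}}$ in modulus while the number of character tuples is at most $(k-1)^\s$. The only caveat concerns your alternative route: smoothness of the projective diagonal hypersurface needs $p\ndivides k$ in addition to $p\not\in\Sigma_\sform$, whereas the character-sum argument actually used requires no such hypothesis.
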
 
\begin{proof}
This follows from the case $b\neq 0 $ of \cite[Sec. 8.7, Theorem 5]{Ireland}, since we have  $p^{\frac{\s}{2}-1}\leq p^{\frac{\s-1}{2}}$ and $\#\X_p^{(k)}\leq k-1$.
\end{proof}

%
%

\begin{proposition}[{\cite[Sec. 8.7, Theorem 5]{Ireland}}]\label{erre:Jacobi}
Let $p$ be a prime number with $p\not\in\Sigma_\sform$. Then
\[
\sol_\sform(0,p) = p^{\s-1}+ \sum_{\chi_1,\ld,\chi_{\s}}\wb\chi_1(a_1)\cdots\wb\chi_{\s}(a_\s) J_0(\chi_1,\ld,\chi_{\s})
\]
where the sum ranges over the $\s$-tuples of characters $\chi_i\in\X_p^{(k)}$ that satisfy $\chi_1\cdots\chi_{\s}=\uno$, and where $\wb\chi_i$ denotes the complex conjugate of $\chi_i$.
\end{proposition}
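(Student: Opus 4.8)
This is the classical evaluation of the number of $\FF_p$-points on a diagonal variety, specialized to equal exponents and right-hand side $0$; the plan is simply to run the Jacobi-sum computation (as in \cite[Sec.~8.7]{Ireland}) in the present notation. The point of departure is the substitution $u_i=x_i^k$: organizing the solutions $\x\in\FF_p^\s$ of $\form\equiv0\pmod p$ by the values $u_i:=x_i^k\in\FF_p$ gives
\[
\sol_\sform(0,p)=\sum_{\substack{u_1,\ld,u_\s\in\FF_p\\ a_1u_1+\dots+a_\s u_\s=0}}\ \prod_{i=1}^\s N_k(u_i),\qquad N_k(u):=\#\{x\in\FF_p:\ x^k=u\},
\]
where the $a_i$ are read modulo $p$. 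The elementary identity to use is $N_k(u)=1+\sum_{\chi\in\X_p^{(k)}}\chi(u)$ for \emph{every} $u\in\FF_p$: for $u\neq0$ it is orthogonality of the characters of the cyclic group $\FF_p^\times$, the characters $\chi$ with $\chi^k=\uno$ being precisely $\uno$ together with the members of $\X_p^{(k)}$; for $u=0$ both sides equal $1$ because $\chi(0)=0$ for nontrivial $\chi$.

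First I would insert this identity into each factor $N_k(u_i)$ and expand the product over $i$, which writes $\sol_\sform(0,p)$ as a sum over subsets $S\subseteq\{1,\ld,\s\}$ and tuples $(\chi_i)_{i\in S}$ with $\chi_i\in\X_p^{(k)}$:
\[
\sol_\sform(0,p)=\sum_{S\subseteq\{1,\ld,\s\}}\ \sum_{(\chi_i)_{i\in S}}\ \sum_{\substack{u_1,\ld,u_\s\in\FF_p\\ a_1u_1+\dots+a_\s u_\s=0}}\ \prod_{i\in S}\chi_i(u_i).
\]
Since $p\notin\Sigma_\sform$ every $a_i$ is invertible modulo $p$, so $v_i:=a_iu_i$ runs bijectively over $\FF_p$; using $\chi_i(a_i^{-1})=\wb\chi_i(a_i)$ (the values of multiplicative characters being roots of unity), the innermost sum becomes $(\prod_{i\in S}\wb\chi_i(a_i))\,T_S$, where $T_S:=\sum_{v_1+\dots+v_\s=0}\prod_{i\in S}\chi_i(v_i)$.

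It then remains to evaluate $T_S$. For $S=\emptyset$ one gets $T_\emptyset=\#\{v\in\FF_p^\s:v_1+\dots+v_\s=0\}=p^{\s-1}$, which produces the main term. For $\emptyset\neq S\neq\{1,\ld,\s\}$, choose $j\notin S$: summing over $v_j$ removes the linear constraint, the remaining variables $v_i$ with $i\notin S\cup\{j\}$ each contribute a factor $p$, and what is left is $p^{\s-1-|S|}\prod_{i\in S}\big(\sum_{v\in\FF_p}\chi_i(v)\big)=0$ since each $\chi_i$ is nontrivial. For $S=\{1,\ld,\s\}$ there is no free variable and $T_S=J_0(\chi_1,\ld,\chi_\s)$ by the definition of the generalized Jacobi sum. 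Collecting the surviving terms yields the formula with the sum over all $\s$-tuples of nontrivial characters of order dividing $k$; and since dilating $v_i\mapsto tv_i$ by $t\in\FF_p^\times$ shows $J_0(\chi_1,\ld,\chi_\s)=(\chi_1\cdots\chi_\s)(t)\,J_0(\chi_1,\ld,\chi_\s)$, that sum is unchanged upon restriction to the tuples with $\chi_1\cdots\chi_\s=\uno$, giving the stated form.

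The argument is essentially bookkeeping and I do not expect a genuine obstacle; the two points that call for some care are the uniform validity of the identity for $N_k$ at $u=0$ (where the convention $\chi(0)=0$ for nontrivial $\chi$ is invoked) and the vanishing $T_S=0$ for the intermediate sets $S$, which relies on every $\chi_i$ being nontrivial.
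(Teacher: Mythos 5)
Your argument is correct and is essentially the classical computation from Ireland--Rosen that the paper invokes by citation (it gives no proof of its own for this proposition): the identity $N_k(u)=1+\sum_{\chi\in\X_p^{(k)}}\chi(u)$, the change of variables $v_i=a_iu_i$, the vanishing of the mixed terms because each $\chi_i$ is nontrivial, and the dilation argument restricting to $\chi_1\cdots\chi_\s=\uno$ are exactly the standard steps. Both delicate points you flag (the $u=0$ case via the convention $\chi(0)=0$, and the vanishing of $T_S$ for intermediate $S$) are handled correctly, so nothing is missing.
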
 

\begin{proposition}[{\cite[Sec. 8.5, Prop 8.5.1 \& Cor. 1]{Ireland}}]\label{Jacobi}
Let $p$ be a prime number and let $\chi_1,\ld,\chi_\ell$ be nontrivial multiplicative characters of $\FF_p$ such that $\chi_1\cdots\chi_\ell=\uno$. Then
\begin{align}
 J(\chi_1,\ld,\chi_{\ell-1}) &=  \frac{\chi_\ell(-1)}{p} G(\chi_1)\cdots G(\chi_\ell) ; 	\label{3.1}\\
 J_0(\chi_1,\ld,\chi_{\ell}) &=\frac{p-1}{p} G(\chi_1)\cdots G(\chi_\ell). 			\label{3.2}
\end{align}
\end{proposition}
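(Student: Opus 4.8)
These two identities are classical (whence the attribution to Ireland--Rosen), and the plan is to derive them from scratch using only three elementary facts: the orthogonality relation $\sum_{t\in\FF_p^\times}\chi(t)=0$ for a nontrivial character $\chi$, the vanishing $\sum_{c\in\FF_p}e^{2\pi i c/p}=0$, and the convention $\chi_i(0)=0$, so that in every sum below a term survives only if all the $t_i$ are nonzero. The workhorse is the following substitution: for a fixed $c\in\FF_p^\times$, the map $(u_1,\ld,u_\ell)\mapsto(cu_1,\ld,cu_\ell)$ is a bijection from the tuples of nonzero elements with $u_1+\dots+u_\ell=1$ onto those with $t_1+\dots+t_\ell=c$, and it turns $\prod_i\chi_i(t_i)$ into $(\prod_i\chi_i)(c)\cdot\prod_i\chi_i(u_i)=\prod_i\chi_i(u_i)$, because $\prod_i\chi_i=\uno$ and $c\neq0$. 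Write $A:=J_0(\chi_1,\ld,\chi_\ell)$, $B:=J(\chi_1,\ld,\chi_\ell)$ and $C:=J(\chi_1,\ld,\chi_{\ell-1})$.

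First I would relate $A$ and $B$. Since each $\chi_i$ is nontrivial, $\prod_{i=1}^\ell(\sum_{t_i\in\FF_p^\times}\chi_i(t_i))=0$; expanding this product and sorting the tuples $(t_1,\ld,t_\ell)\in(\FF_p^\times)^\ell$ by the value $c:=t_1+\dots+t_\ell$, the fiber $c=0$ contributes $A$ and, by the substitution above, each of the $p-1$ fibers with $c\neq0$ contributes $B$; hence $0=A+(p-1)B$. Next, expanding $G(\chi_1)\cdots G(\chi_\ell)=\sum_{t_1,\ld,t_\ell\in\FF_p}\chi_1(t_1)\cdots\chi_\ell(t_\ell)\,e^{2\pi i(t_1+\dots+t_\ell)/p}$ and sorting the (again necessarily nonzero) tuples by $c=t_1+\dots+t_\ell$, the $c=0$ part equals $A$ while the $c\neq0$ part equals $B\sum_{c\in\FF_p^\times}e^{2\pi i c/p}=-B$; thus $G(\chi_1)\cdots G(\chi_\ell)=A-B=\tfrac{p}{p-1}A$ by the previous relation, which is precisely \eqref{3.2}.

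It remains to pass from $\ell$ to $\ell-1$ characters in order to get \eqref{3.1}. In $A=\sum_{t_1+\dots+t_\ell=0}\chi_1(t_1)\cdots\chi_\ell(t_\ell)$ every surviving tuple has all $t_i\neq0$; fixing $t_\ell\in\FF_p^\times$ and substituting $t_i=-t_\ell v_i$ for $i<\ell$ forces $v_1+\dots+v_{\ell-1}=1$, and pulling the factors $\chi_i(-1)$ and $\chi_i(t_\ell)$ out of each term and invoking $\prod_{i=1}^\ell\chi_i=\uno$ together with $\prod_{i<\ell}\chi_i(-1)=\chi_\ell(-1)^{-1}=\chi_\ell(-1)$ --- a sign, since $\chi_\ell(-1)^2=\chi_\ell(1)=1$ --- gives $\chi_1(t_1)\cdots\chi_\ell(t_\ell)=\chi_\ell(-1)\prod_{i<\ell}\chi_i(v_i)$. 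Summing over $t_\ell\in\FF_p^\times$ and over the admissible $(v_i)$ (relaxing $v_i\neq0$ to $v_i\in\FF_p$, which is harmless since $\chi_i(0)=0$) yields $A=(p-1)\chi_\ell(-1)\,C$. Combining this with \eqref{3.2} and $\chi_\ell(-1)^2=1$ gives $C=\tfrac{1}{(p-1)\chi_\ell(-1)}A=\tfrac{\chi_\ell(-1)}{p}G(\chi_1)\cdots G(\chi_\ell)$, which is \eqref{3.1}.

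There is no genuine obstacle here: the whole argument is orthogonality plus the one substitution applied a handful of times. The only thing to watch is the bookkeeping --- keeping straight whether an auxiliary sum over $\FF_p^\times$ sees the trivial character (value $p-1$) or a nontrivial one (value $0$, or $-1$ in the additive case), tracking the sign $\chi_\ell(-1)$, and making sure that the hypothesis $\prod_i\chi_i=\uno$ is invoked at each substitution so that the stray factor $(\prod_i\chi_i)(c)$ really collapses to $1$.
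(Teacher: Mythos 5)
Your proof is correct, and since the paper gives no argument of its own here (it simply cites Ireland--Rosen, Sec.\ 8.5), there is nothing to diverge from: your derivation --- expanding $G(\chi_1)\cdots G(\chi_\ell)$ and fibering over the value of $t_1+\dots+t_\ell$, using the scaling bijection (valid because $\chi_1\cdots\chi_\ell=\uno$) to see that each nonzero fiber contributes the same Jacobi sum, and then the substitution $t_i=-t_\ell v_i$ to pass from $J_0(\chi_1,\ld,\chi_\ell)$ to $\chi_\ell(-1)(p-1)J(\chi_1,\ld,\chi_{\ell-1})$ --- is essentially the standard proof in that reference. The bookkeeping you flag (the sign $\chi_\ell(-1)=\chi_\ell(-1)^{-1}$, the vanishing convention $\chi_i(0)=0$, and $\sum_{c\in\FF_p^\times}e^{2\pi i c/p}=-1$) is handled correctly, so both \eqref{3.1} and \eqref{3.2} follow as you state.
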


%
%

\section{The zero residue class in the cubic and biquadratic cases}\label{sec:diagonal}

\subsection{Evaluation of the Jacobi sums}\label{sec:diagonal:Jacobi}

We now specialize to the case $\s=k\in\{3,4\}$. We first compute the modified Jacobi sums appearing in \cref{erre:Jacobi}, using the notation introduced in \cref{sec:char:power}. We will then get an explicit formula for $\sol_\sform(0,p)$. In the next sections we will use it to deduce good upper bounds on $\sol_\sform(0,p)$ for special choices of $p$. 
Recall from \cref{sec:char:power} and \eqref{eq:pi} the definition of $\chis$ and $\pi_{\s,p}$ for $\s\in\{3,4\}$ and $p\equiv 1\bmod \s$.

\begin{lemma}\label{explicit:Jacobi}
Let $p$ be a prime number with $p \equiv 1\pmod 3$. Then
\begin{align}
J_0(\chip,\chip,\chip) &= (p-1)\pi_{3,p}.			\label{4.1}
\end{align}
Analogously, let $q$ be a prime number with $q\equiv 1 \pmod 4$. Then 
\begin{align}
J_0(\chipp,\chipp,\chipp,\chipp) &=(q-1)\pi_{4,q}^2;		\label{4.2}\\
J_0(\chipp,\chipp,\chipp^3,\chipp^3) &=q(q-1);				\label{4.3}\\
J_0(\chipp^2,\chipp^2,\chipp,\chipp^3) &=q(q-1)\chipp(-1);	\label{4.4}\\
J_0(\chipp^2,\chipp^2,\chipp^2,\chipp^2) &=q(q-1).		\label{4.5}
\end{align}
\end{lemma}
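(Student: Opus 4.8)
The plan is to reduce each modified Jacobi sum $J_0$ to a product of Gauss sums via \cref{Jacobi}, and then evaluate that product using the relations among Gauss sums of power residue characters. Concretely, equation \eqref{3.2} gives $J_0(\chi_1,\ld,\chi_\ell) = \frac{p-1}{p} G(\chi_1)\cdots G(\chi_\ell)$ whenever the characters are nontrivial and $\chi_1\cdots\chi_\ell = \uno$; all five tuples in the statement satisfy these hypotheses (for instance $\chipp\cdot\chipp\cdot\chipp^3\cdot\chipp^3 = \chipp^{8} = \uno$ since $\chipp$ has order $4$). So the task becomes: compute $G(\chip)^3$, $G(\chipp)^4$, $G(\chipp)^2 G(\chipp^3)^2$, $G(\chipp^2)^2 G(\chipp) G(\chipp^3)$, and $G(\chipp^2)^4$.

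The two tools I would use for this are the classical identities $G(\chi)G(\wb\chi) = \chi(-1)\,p$ for nontrivial $\chi$ (so $G(\chipp)G(\chipp^3) = \chipp(-1)\,p$ and, since $\chipp^3 = \wb{\chipp}$, also $G(\chipp^2)^2 = \chipp^2(-1)\,p = p$ because $\chipp^2$ is the quadratic character and real-valued), together with the Hasse--Davenport--type relation expressing $G(\chi)^\ell$ through a Jacobi sum: $G(\chi)^\s = \chi(-1)\, p\, J(\chi,\chi,\ld,\chi)$-style manipulations. More efficiently, I would go back through \eqref{3.1}: for $\s=3$, applying \eqref{3.1} with $\chi_1=\chi_2=\chip$, $\chi_3=\chip$ (using $\chip^3=\uno$) gives $J(\chip,\chip) = \frac{\chip(-1)}{p}G(\chip)^3$, and since $\chip(-1) = 1$ (as $-1$ is a cube modulo $p$) this yields $G(\chip)^3 = p\,\pi_{3,p}$ by the very definition $\pi_{3,p} = J(\chip,\chip)$ from \eqref{eq:pi}. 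Substituting into \eqref{3.2} gives \eqref{4.1}. For $\s=4$: \eqref{3.1} with three copies of $\chipp$ (using $\chipp^4=\uno$, so $\chi_4 = \chipp$) gives $J(\chipp,\chipp,\chipp) = \frac{\chipp(-1)}{q}G(\chipp)^4$; on the other hand $J(\chipp,\chipp,\chipp) = J(\chipp,\chipp)\cdot\chipp(?)$-type reduction, or more directly one knows $J(\chipp,\chipp,\chipp) = \chipp(-1)\pi_{4,q}^2$ via two applications of the recursion $J(\chi_1,\ld,\chi_{j+1}) = \frac{G(\chi_1)\cdots G(\chi_{j+1})}{G(\chi_1\cdots\chi_{j+1})}$ together with $\pi_{4,q} = J(\chipp,\chipp)$ and $G(\chipp)^2 = \chipp(-1)\, G(\chipp^2)\,\pi_{4,q}$. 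Either way one gets $G(\chipp)^4 = q\,\pi_{4,q}^2$, hence \eqref{4.2}.

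For the remaining three: \eqref{4.3} follows from $G(\chipp)^2 G(\chipp^3)^2 = \big(G(\chipp)G(\chipp^3)\big)^2 = (\chipp(-1)\,q)^2 = q^2$ (the sign squares away), so \eqref{3.2} gives $J_0 = \frac{q-1}{q}\cdot q^2 = q(q-1)$. For \eqref{4.4}, $G(\chipp^2)^2 G(\chipp) G(\chipp^3) = \big(\chipp^2(-1)\,q\big)\cdot\big(\chipp(-1)\,q\big) = q^2\,\chipp(-1)$, using $\chipp^2(-1) = 1$; then $J_0 = \frac{q-1}{q}\cdot q^2\chipp(-1) = q(q-1)\chipp(-1)$. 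For \eqref{4.5}, $G(\chipp^2)^4 = \big(G(\chipp^2)^2\big)^2 = (\chipp^2(-1)\,q)^2 = q^2$, giving $J_0 = q(q-1)$.

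The main obstacle is bookkeeping the sign factors $\chi(-1)$ correctly: one must remember that $\chip(-1) = 1$ always (since $3$ is odd, $-1 = (-1)^3$ is a cube), that $\chipp^2(-1) = 1$ (as $\chipp^2$ is the Legendre symbol and $q\equiv 1\bmod 4$), but that $\chipp(-1) = (-1)^{(q-1)/4} = \pm 1$ genuinely varies with $q$, which is exactly why \eqref{4.4} retains a $\chipp(-1)$ while \eqref{4.3} and \eqref{4.5} do not. A secondary subtlety is justifying $G(\chipp)^4 = q\,\pi_{4,q}^2$ cleanly; the cleanest route is to invoke the standard identity $\pi_{4,q} = J(\chipp,\chipp)$ already recorded after \eqref{eq:pi}, combine it with \eqref{3.1} applied suitably, and use $G(\chipp^2)^2 = q$ to eliminate the auxiliary Gauss sum — all of which are elementary manipulations with the references \cite[Sec. 8.5, Sec. 9.4]{Ireland} already cited.
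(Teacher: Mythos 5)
Your proposal is correct and follows essentially the same route as the paper: reduce each $J_0$ to a product of Gauss sums via \eqref{3.2}, evaluate $G(\chip)^3$ and $G(\chipp)^4$ through \eqref{3.1} together with $\pi_{\s,p}=J(\chi_{\s,p},\chi_{\s,p})$, and handle \eqref{4.3}--\eqref{4.5} with $G(\chipp)G(\chipp^3)=\chipp(-1)q$ and $G(\chipp^2)^2=q$ (which the paper obtains from \eqref{3.2} applied to $(\chipp,\chipp^3)$ and $(\chipp^2,\chipp^2)$ rather than by quoting $G(\chi)G(\wb\chi)=\chi(-1)q$, an equivalent step). The only blemish is the spurious factor $\chipp(-1)$ in your parenthetical $G(\chipp)^2=\chipp(-1)G(\chipp^2)\pi_{4,q}$ (the correct relation from \eqref{3.1} applied to $(\chipp,\chipp,\chipp^2)$ is $G(\chipp)^2G(\chipp^2)=q\,\pi_{4,q}$, with no sign since $\chipp^2(-1)=1$), but it squares away and does not affect $G(\chipp)^4=q\,\pi_{4,q}^2$.
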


\begin{proof}
Since $\chip(-1)=\chip((-1)^3)=1$, equation \eqref{4.1} is a direct consequence of \eqref{3.1} and \eqref{3.2} applied to the triple of characters $(\chip,\chip,\chip)$.
It is immediate to see from the definitions that $J_0(\chipp,\chipp^3)\chipp(-1)=J_0(\chipp^2,\chipp^2)=q-1$ and that $\chipp^2(-1)=\chipp((-1)^2)=1$. 
Then \eqref{3.2} applied to the tuples of characters $(\chipp,\chipp^3)$, $(\chipp^2,\chipp^2)$ and \eqref{3.1} applied to  $(\chipp,\chipp,\chipp^2)$ give respectively
\begin{align}
&G(\chipp)G(\chipp^3) = \chipp(-1) q;				\label{4.6}\\
&G(\chipp^2)G(\chipp^2) = q ;					\label{4.7}\\
&G(\chipp)G(\chipp)G(\chipp^2) = q\,\pi_{4,q}.		\label{4.8}
\end{align}
Combining \eqref{4.7} and \eqref{4.8} we get
\begin{equation}
G(\chipp)^4 = q\,\pi_{4,q}^2.					\label{4.9}
\end{equation}
Now, \eqref{4.2}-\eqref{4.5} follow at once from  \eqref{4.6}-\eqref{4.9} and \eqref{3.2}.
\end{proof}

\subsection{Cubic and biquadratic diagonal congruences}\label{sec:diagonal:sol}

The required estimate in the case of cubic diagonal forms in 3 variables is readily obtained.
\begin{proposition}\label{erre:modp:3}
Let $\form= a_1x_1^3+a_2x_2^3 + a_3x_3^3$ with $a_1,a_2,a_3\in\Z\setminus\{0\}$ and let $p$ be a prime number with $p\not \in\Sigma_\sform$ and $p \equiv 1\pmod 3$. 
Then
\begin{equation}
\sol_\sform(0,p) =  p^2+2\rhop(p\sqrt p - \sqrt p),
\end{equation}
with $\Hp:=\wbchip(a_1a_2a_3)\pi_{3,p}/ \sqrt p$.
\end{proposition}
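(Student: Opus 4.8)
The plan is to apply \cref{erre:Jacobi} with $\s=k=3$ and then evaluate the resulting character sum using \cref{explicit:Jacobi}. First I would enumerate the $3$-tuples of characters $(\chi_1,\chi_2,\chi_3)$ with each $\chi_i\in\X_p^{(3)}$ and $\chi_1\chi_2\chi_3=\uno$. Since $\X_p^{(3)}=\{\chip,\chip^2=\wbchip\}$ has exactly two elements, and the product must be trivial, the only possibilities are $(\chip,\chip,\chip)$ and $(\wbchip,\wbchip,\wbchip)$: indeed if the multiset contains $\chip$ with multiplicity $j$ then the product is $\chip^{j}\wbchip^{3-j}=\chip^{2j-3}$, which is trivial iff $3\mid 2j-3$, i.e. $j\in\{0,3\}$. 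So the sum in \cref{erre:Jacobi} has just these two terms.

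Next I would plug in the values. For the term $(\chip,\chip,\chip)$ we get $\wbchip(a_1a_2a_3)\,J_0(\chip,\chip,\chip)=\wbchip(a_1a_2a_3)(p-1)\pi_{3,p}$ by \eqref{4.1}. For the conjugate term $(\wbchip,\wbchip,\wbchip)$ we observe that $J_0(\wbchip,\wbchip,\wbchip)=\wb{J_0(\chip,\chip,\chip)}=(p-1)\wb\pi_{3,p}$ (the modified Jacobi sum is a sum of roots of unity, so conjugating the characters conjugates the sum), and $\chip(a_1a_2a_3)=\wb{\wbchip(a_1a_2a_3)}$; hence this term is the complex conjugate of the first. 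Therefore
\[
\sol_\sform(0,p)=p^2+2\Re\!\left(\wbchip(a_1a_2a_3)(p-1)\pi_{3,p}\right).
\]
Finally, writing $\Hp:=\wbchip(a_1a_2a_3)\pi_{3,p}/\sqrt p$ and recalling from \cref{sec:char:power} that $p=\pi_{3,p}\wb\pi_{3,p}$, so $|\pi_{3,p}|=\sqrt p$ and hence $|\Hp|=1$ (also $|\wbchip(a_1a_2a_3)|=1$ since $p\nmid a_1a_2a_3$), we have $\wbchip(a_1a_2a_3)(p-1)\pi_{3,p}=(p-1)\sqrt p\,\Hp$, and substituting gives $\sol_\sform(0,p)=p^2+2(p-1)\sqrt p\,\Re\Hp=p^2+2\rhop(p\sqrt p-\sqrt p)$, which is the claimed formula.

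There is essentially no hard part here: the statement is a bookkeeping consequence of the two cited propositions. The only points demanding a little care are (i) checking that the trivial-product condition really does restrict the character tuples to the two "diagonal" ones — this uses that $\op{gcd}(3,p-1)-1=1$ nontrivial character of each order, wait, that $\#\X_p^{(3)}=2$ — and (ii) justifying the conjugation symmetry that collapses the two terms into $2\Re(\cdot)$, which follows since complex conjugation is a ring automorphism of $\C$ fixing the real structure and permutes $\X_p^{(3)}$ by $\chi\mapsto\bar\chi$, while it sends $\pi_{3,p}=J(\chip,\chip)$ to $J(\wbchip,\wbchip)$ and $J_0(\chip,\chip,\chip)$ to $J_0(\wbchip,\wbchip,\wbchip)$. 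I would state the argument in three or four lines accordingly.
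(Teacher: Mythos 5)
Your proposal is correct and follows essentially the same route as the paper: enumerate the two admissible character triples from $\X_p^{(3)}=\{\chip,\wbchip\}$, apply \cref{erre:Jacobi} together with \eqref{4.1}, and collapse the conjugate pair of terms into $2(p-1)\Re\bigl(\wbchip(a_1a_2a_3)\pi_{3,p}\bigr)$. The extra justification you give for the conjugation symmetry is fine and only makes explicit what the paper leaves implicit.
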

\begin{proof}
There are only two nontrivial cubic characters of $\FF_p$: 
$\X_p^{(3)}=\{\chip,\wbchip\}$. 
Notice that $\chip^{-1}=\chip^2=\wbchip$.
Therefore by \cref{erre:Jacobi}, \cref{explicit:Jacobi} and the multiplicativity of characters, we get 
\[
\begin{aligned}
\sol_\sform(0,p) &= p^2+\wbchip(a_1a_2a_3) (p-1)\pi_{3,p}+\chip(a_1a_2a_3)(p-1)\wb\pi_{3,p}\\
                  &=p^2+2(p-1)\Re(\wbchip(a_1a_2a_3)\pi_{3,p}).
\end{aligned}
\]
\end{proof}

The case of biquadratic diagonal forms comes with some extra complication, so we introduce some notation.
Let $q$ be a prime number with $q \equiv 1\pmod 4$ and let $\ua=(a_1,a_2,a_3,a_4)\in\Z^4$ with $q\ndivides a_1a_2a_3a_4$.
We denote by $\chipp(\ua)$ the quadruple 
\[
\chipp(\ua):=(\chipp(a_1),\chipp(a_2),\chipp(a_3),\chipp(a_4))\in\mu_4^4,
\]
where $\mu_4=\{1,-1,i,-i\}$. We say that two quadruples $\u_1,\u_2\in\mu_4^4$ are equivalent if $\u_2$ can be obtained from $\u_1$ by performing some or all of the following operations: (1) permutation of the components; (2) componentwise multiplication by an element of $\mu_4$; (3) componentwise complex conjugation.
The quotient $\mu_4^4/\!\sim$ obtained by this equivalence relation has 8 elements, displayed in \cref{table}. 
For later reference, we label these 8 elements with the names $U_1,\ld,U_8$.
We denote the equivalence class of an element $\u\in\mu^4_4$ by $[\u]$.

\begin{table}
\centering
\begin{tabular}{cccccc}
\toprule
 &	 $[\chipp(\ua)]\in \mu_4^4/\!\sim$ 	& 	\quad $b_{\sform,q}$\ \quad	&	\quad $c_{\sform,q}$\ \quad\quad  	& 	$b_{\sform,q}+c_{\sform,q}$ 	& 	$b_{\sform,q}-c_{\sform,q}$	\\
\midrule
$U_1$ 	&	 [(1, 1, 1, 1)] 	& 7	& 12	& 19	& -5 	 	\\
$U_2$ 	&	 [(1, 1, 1,-1)] 	& -5	& 0	& -5	& -5	 	\\
$U_3$ 	&	 [(1, 1, 1, i\,)]	& -1	& -6	& -7	& 5	 	\\
$U_4$ 	&	 [(1, 1,-1,-1)]	& 7	& -4	& 3	& 11	 	\\
$U_5$ 	&	 [(1, 1,-1, i\,)] 	& -1	& 2	& 1	& -3	 	\\
$U_6$ 	&	 [(1, 1, i, i\,)]	& 3	& 4	& 7	& -1	 	\\
$U_7$ 	&	 [(1, 1, i,-i\,)]  	& -1	& 0	& -1	& -1	 	\\
$U_8$ 	&	 [(1,-1, i,-i\,)] 	& 3	& -4	& -1	& 7	 \\	
\bottomrule
\end{tabular}
\vspace{3pt}
\caption{Table displaying the quantities appearing in \cref{erre:modp:4}.} \label{table}
\end{table}

\begin{proposition}\label{erre:modp:4}
Let $\form= a_1x_1^4 + a_2x_2^4  + a_3x_3^4 + a_4x_4^4$ with $\ua\in(\Z\setminus\{0\})^4$ as above. 
Let $q$ be a prime number with $q\not\in\Sigma_\sform$ and $q \equiv 1\pmod 4$.
Then
\begin{equation}\label{eq:modp:4}
\sol_\sform(0,q) = q^3+(2\rhopp + \Kpp)\, q(q-1),
\end{equation}
where 
\begin{align*}
\Hpp&:=\wbchipp(a_1a_2a_3a_4)\pi^2_{4,q}/q,\\
\Kpp&:=b_{\sform,q}+\chipp(-1)c_{\sform,q},
\end{align*} 
and $b_{\sform,q}, c_{\sform,q}\in\Z$ depend on $[\chipp(\ua)]$ as indicated  in \cref{table}.
\end{proposition}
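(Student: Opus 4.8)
The plan is to start from the exact formula in \cref{erre:Jacobi}, specialized to $\s=k=4$: we have $\sol_\sform(0,q) = q^3 + \sum \wb\chi_1(a_1)\cdots\wb\chi_4(a_4)\,J_0(\chi_1,\ld,\chi_4)$, where the sum ranges over quadruples of characters $\chi_i\in\X_q^{(4)}=\{\chipp,\chipp^2,\chipp^3\}$ with $\chi_1\cdots\chi_4=\uno$. The first step is to enumerate all such quadruples up to permutation and identify the product of conjugate characters $\wb\chi_1\cdots\wb\chi_4$ in each case; by the constraint $\chi_1\cdots\chi_4=\uno$ one finds the possibilities for the multiset $\{\chi_1,\chi_2,\chi_3,\chi_4\}$. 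The relevant multisets (accounting for multiplicity of how many ordered tuples give the same multiset) are exactly those whose $J_0$ was computed in \cref{explicit:Jacobi}: $(\chipp,\chipp,\chipp,\chipp)$ and its conjugate, $(\chipp,\chipp,\chipp^3,\chipp^3)$, $(\chipp^2,\chipp^2,\chipp,\chipp^3)$, and $(\chipp^2,\chipp^2,\chipp^2,\chipp^2)$.

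Second, I would substitute the values of $J_0$ from \eqref{4.2}--\eqref{4.5} and collect terms. The quadruple $(\chipp,\chipp,\chipp,\chipp)$ contributes $\wb\chipp(a_1a_2a_3a_4)(q-1)\pi_{4,q}^2 = q(q-1)\Hpp$, and its conjugate contributes $q(q-1)\wb\Hpp$; together these give $2q(q-1)\rhopp$ once we note $\rhopp = \Re\Hpp$. The remaining multisets (those involving $\chipp^2$ and/or the $\chipp,\chipp^3$ pair) all have $J_0$ equal to an integer multiple of $q(q-1)$, and their character-value prefactors $\wb\chi_1(a_1)\cdots\wb\chi_4(a_4)$ are products of fourth roots of unity that depend only on $[\chipp(\ua)]$. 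The combinatorial bookkeeping — counting, for each unordered multiset of characters, how many ordered quadruples $(\chi_1,\ld,\chi_4)$ realize it, and how the prefactors combine over the assignments of $a_i$ to the character slots — is what produces the integers $b_{\sform,q}$ (the "diagonal" part independent of $\chipp(-1)$) and $c_{\sform,q}$ (the part carrying a factor $\chipp(-1)$, coming from \eqref{4.4}). One then checks that these sums depend only on the equivalence class $[\chipp(\ua)]$ under the three operations (permutation, multiplication by $\mu_4$, conjugation): permutation invariance is clear from symmetry of the sum, conjugation invariance swaps $\Hpp\leftrightarrow\wb\Hpp$ and leaves $2\rhopp$ fixed, and multiplication by a fixed $\zeta\in\mu_4$ multiplies the prefactor in each term by $\wb\chi_1(\zeta)\cdots\wb\chi_4(\zeta) = \wb{(\chi_1\cdots\chi_4)}(\zeta) = \uno(\zeta) = 1$. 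Finally one tabulates $b_{\sform,q},c_{\sform,q}$ for each of the eight classes $U_1,\ld,U_8$, obtaining \cref{table}, and reads off \eqref{eq:modp:4} with $\Kpp = b_{\sform,q}+\chipp(-1)c_{\sform,q}$.

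The main obstacle is the combinatorial accounting in the second step: carefully converting \cref{erre:Jacobi}'s sum over \emph{ordered} quadruples of characters into a sum over \emph{unordered} multisets (with the correct multinomial multiplicities $\binom{4}{\cdot}$), and then tracking how the coefficients $\wb\chi_i(a_i)$ distribute over the $a_i$ — in particular that for a multiset like $\{\chipp,\chipp,\chipp^3,\chipp^3\}$ one must sum over the $\binom{4}{2}=6$ ways of deciding which two of $a_1,a_2,a_3,a_4$ sit in the $\chipp$-slots, producing an expression like $\sum_{\{i,j\}}\wb\chipp(a_ia_j)\chipp(a_ka_\ell)$, and similarly for the mixed multiset involving $\chipp^2$. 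Verifying that each such symmetric function of $(\chipp(a_1),\ld,\chipp(a_4))\in\mu_4^4$ takes the claimed integer value on each representative in \cref{table} is a finite but genuinely error-prone computation; I would organize it by noting that $\chipp^2 = \chippl$ is the quadratic (Legendre) character, so $\chipp^2(a_i)=\pm1$, which collapses many of the terms, and then handle the eight classes case by case, using conjugation and $\mu_4$-scaling to reduce to the listed normal forms. The $J_0$ evaluations themselves (\cref{explicit:Jacobi}) are already given, so no further Gauss-sum work is needed; everything reduces to this bookkeeping plus the invariance check.
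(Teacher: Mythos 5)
Your proposal is correct and is essentially the paper's own proof: apply \cref{erre:Jacobi} with $\X_q^{(4)}=\{\chipp,\chipp^2,\chipp^3\}$, substitute the $J_0$ values from \cref{explicit:Jacobi}, split the contribution of $(\chipp,\chipp,\chipp,\chipp)$ and its conjugate into $2q(q-1)\rhopp$, package the remaining (integer) contributions into the symmetric sums $b_{\sform,q}$ and $c_{\sform,q}$ (the latter carrying $\chipp(-1)$ from \eqref{4.4}), check dependence only on $[\chipp(\ua)]$, and tabulate the eight classes. One small correction to your invariance check: the conjugation invariance needed is that of $b_{\sform,q}$ and $c_{\sform,q}$ themselves, which follows by reindexing the symmetric sums under the swap $\chipp\leftrightarrow\chipp^3$ (not via $\Hpp\leftrightarrow\wb\Hpp$, which only shows the $2\rhopp$ term is well defined) -- a trivial fix that does not affect the argument.
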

\begin{proof}
We have $\X_q^{(4)}=\{\chipp,\chipp^2,\chipp^3\}$, i.e. there are only three nontrivial biquadratic characters of $\FF_q$. 
Thus \cref{erre:Jacobi} and \cref{explicit:Jacobi} give 
\[
\sol_\sform(0,q) = q^3 + q(q-1)(b_{\sform,q}+\chipp(-1)c_{\sform,q}) + (q-1)d_{\sform,q},
\]
where
\[
\begin{aligned}
b_{\sform,q}&=\chipp^2(a_1a_2a_3a_4) + \frac 1 4 \sum_{\sigma\in\mfk S_4}\chipp(a_{\sigma(1)}a_{\sigma(2)})\chipp^3(a_{\sigma(3)}a_{\sigma(4)});\\
c_{\sform,q}&=\frac 1 2 \sum_{\sigma\in\mfk S_4}\chipp^2(a_{\sigma(1)})\chipp^2(a_{\sigma(2)})\chipp(a_{\sigma(3)})\chipp^3(a_{\sigma(4)});\\
d_{\sform,q}&= \wbchipp(a_1a_2a_3a_4)\pi_{4,q}^2 + \chipp(a_1a_2a_3a_4)\wb\pi_{4,q}^2.
\end{aligned}
\]
Here $\mfk S_4$ denotes the set of permutations of $\{1,2,3,4\}$.
We observe that both $b_{\sform,q}$ and $c_{\sform,q}$ are symmetric polynomial combinations of the components of $\chipp(\ua)$. They are both homogeneous of degree $8$, so they are invariant with respect to multiplying the entries of $\chipp(\ua)$ by some $\lambda\in\mu_4$. Moreover, we notice that both $b_{\sform,q}$ and $c_{\sform,q}$ are invariant under conjugation. Therefore $b_{\sform,q}$ and $c_{\sform,q}$ depend only on the class $[\chipp(\ua)]\in\mu_4^4/\!\sim$. Now, a straightforward computation gives the values listed in \cref{table} in all the 8 cases. 
The proposition follows, since moreover $d_{\sform,q}=2\Re(\wbchipp(a_1a_2a_3a_4)\pi^2_{4,q})$.
\end{proof}

We remark that in the above statements we have $\rhop,\rhopp\in [-1,1]$ for all $p\equiv 1 \pmod 3$ and $q\equiv 1 \pmod 4$, because $\abs{\pi_{3,p}}=\sqrt p$ and $\abs{\pi_{4,q}^2}=q$. In fact, in the next sections we are going to use the fact that for all $\rho\in (-1,1)$ the inequalities $\rhop\less \rho$ and $\rhopp\less\rho$ are satisfied for a positive proportion of the primes. Notice moreover that $\chipp(-1)=1$ if $q\equiv 1\pmod 8$ and  $\chipp(-1)=-1$ if $q\equiv 5\pmod 8$. Therefore a necessary condition to have $\sol_\sform(0,q)\less q^3$ in the case $\s=k=4$, is that $\Kpp=b_{\sform,q}\pm c_{\sform,q}\less 2$ for some choice of sign $\pm$. Compare this with \cref{table}.

\section{Hecke L-functions and asymptotic estimates}\label{sec:hecke}

There is a universal strategy, which we will implement later, to study the range of values of $H_{F,p}$ and $H_{F,q}$ from the previous section, or more generally quantities likewise computed from Jacobi sums. 
In this section we collect the main ingredients of the method: following Weil the Jacobi sums can be interpreted as Hecke characters; the theory of Hecke L-functions provides ``generalized prime number theorem''-type estimates; finally these estimates are feeded into equidistribution lemmas. 
This game plan is inspired by Moreno \cite{Moreno}, even though in detail we follow more closely an approach of Heath-Brown and Patterson \cite[p.115]{HeathPatterson} by using the generalized prime number theorem of Kubilyus and the equidistribution lemma of Erd{\H{o}}s and Tur{\'a}n. 

\subsection{Hecke characters}\label{sec:hecke:L}

Let $K$ be a number field of degree $d:=[K:\Q]$. 
A \emph{Hecke character} (also named Gr{\"o}ssencharakter) of $K$ is a character of the id\`ele class group $\mbb A_K^\times / K^\times$. 
More down to earth, let $\O_K$ be the ring of integers of $K$, let $\m\subseteq\O_K$ be a nonzero ideal and let $\I_\m$ be the set of the ideals of $\O_K$ that are coprime to $\m$. 
Since $\O_K$ is a Dedekind domain, $\I_\m$ is a multiplicative monoid generated by the prime ideals of $\O_K$ that don't divide $\m$. 
A multiplicative homomorphism
\[
 H:\ \I_\m\to\C^\times
\]
is a Hecke character of $K$ if there is a continuous group homomorphism $\chi_\infty: (K\otimes_\Q \R)^\times \to \C^\times$ such that $H((\alpha))=\chi_\infty(\alpha\otimes 1)$ for all $\alpha\in \O_K$ satisfying $\alpha\equiv 1 \pmod \m$. 
In other words, $H$ is a Hecke character if, for the same $\alpha$,
\begin{equation}\label{eq:exponents}
H((\alpha))=\prod_{\si: K\to \C} \si(\alpha)^{k_{\si}}\abs{\si(\alpha)}^{c_\si}
\end{equation}
for some integers $(k_\si)_\si$ and complex numbers $(c_\si)_\si$. 
We say that the $2n$-tuple $(k_\si,c_\si)_\si$ is a \emph{vector of exponents} of $H$. 
The ideal $\m$ is a \emph{defining ideal} of $H$ and $\chi_\infty$ is the \emph{infinity type} of $H$.
A Hecke character $H$ is unitary if $\abs{H(\mfk a)}=1$ for all $\mfk a\in\I_\m$. 

As a word of caution, we mention the fact that some authors define $\chi_\infty^{-1}$ to be the infinity type of $H$. 
Moreover, sometimes in the literature the Hecke characters are required to be unitary by definition, while those that are not unitary are called quasicharacters. 
For more details on the basic facts and properties of Hecke characters, we refer to the fundational article of Hecke \cite{Hecke} or to the first chapter of Kubilyus \cite{Kubilyus}. 

According to the general theory, we know that the unitary  Hecke characters of $K$ with defining ideal $\m$ form a finitely generated abelian group $G(K,\m)$. 
This group contains a natural free subgroup $G^{(1)}(K,\m)$ of order $d-1$ whose elements are called \emph{Hecke characters of the first kind}. 
Then the group $G(K,\m)$ of all unitary Hecke characters (which in the literature are sometimes called Hecke characters \emph{of the second kind} \cite{Kubilyus}) splits into a direct product 
\[
G(K,\m) = G^{(1)}(K,\m)\times T(K,\m),
\]
where $T(K,\m)$ is the subgroup of Hecke characters of finite order, which are sometimes called \emph{abelian characters}. 
This is a finite subgroup with cardinality $h(\m):= 2^{r_1} h(K) \varphi(\m)$, where $r_1$ is the number of real embeddings of $K$, $h(K)$ is the ideal class number and $\varphi(\m):=\#(\O_K/\m)^\times$ is the Euler function. 
In other words, every unitary Hecke character $H$ can be written uniquely as a product $H=\chi\xi$ of an abelian character and a Hecke character of the first kind.  
Every Hecke character can be normalized to a unitary one through multiplication by a real power of the norm character. 

In this paper we are concerned with two important examples of Hecke characters.

\begin{definition}\label{def:symbol}
Let $\s\in\N_+$, let $K$ be a number field containing all $\s$-th roots of unity, with ring of integers $\O_K$, and let $a\in\O_K\setminus\{0\}$. 
Let also $\m_1:=(a\s)\subseteq \O_K$ and $\m_2:=(\s)\subseteq \O_K$. 
Recall the definition of $\chi_{\s,\p}(\cdot)$ from \cref{sec:char:power} and that $\chi_{\s,\p}$ can be seen as a character of the finite field $\O_K/\p$. 
Then we define the power residue symbol and the (normalized) Jacobi sum symbol
\[
\left(\frac a \cdot\right)_{\s}:\ \I_{\m_1}\to \C^\times
\ ,\quad\ 
\J_{\s}(\cdot):\ \I_{\m_2}\to \C^\times
\]
by setting $\left(\frac a \p\right)_{\s}:=\chi_{\s,\p}(a)$ and $\J_{\s}(\p):=-J(\chi_{\s,\p},\chi_{\s,\p})(N\p)^{-1/2}$ for all prime ideal $\p$ coprime to $\m_1$ for the first, to $\m_2$ for the second, and then extending by multiplicativity. 
Here $N\p:=\#(\O_K/\p)$ denotes the norm of $\p$.
\end{definition}

\begin{proposition}\label{hecke}
Keep the notation of \cref{def:symbol}. 
\begin{enumerate}[(i)]
\item The power residue symbol $\left(\frac a \cdot\right)_\s$ is a unitary abelian character of $K$ with trivial infinity type and with $\m_{a,\s}:=(a\s)^{f_{a,\s}}$ as a defining ideal, for some  $\f_{a,\s}\in\N_+$. 
\item The Jacobi sum symbol $\J_{\s}(\cdot)$ is a Hecke character of $K$ with defining ideal $\m_{\J_\s}:=(\s^2)$. It is unitary if $s\geq 3$. 
Moreover if $s\in\{3,4\}$ then the infinity type of $\J_{\s}(\cdot)$ satisfies $\chi_\infty(\alpha\otimes 1)= \alpha / {\abs{\alpha}}$ for all $\alpha\in K^\times$.
\end{enumerate}
\end{proposition}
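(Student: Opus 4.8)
The plan is to verify the two parts separately, in each case reducing to the abstract criterion for being a Hecke character given by \eqref{eq:exponents}, i.e.\ exhibiting the infinity type explicitly on principal ideals generated by elements congruent to $1$ modulo the relevant modulus. For part~(i), the power residue symbol $\left(\frac a\cdot\right)_\s$ takes values in $\mu_\s$ by construction, hence is automatically unitary, and its order-$\s$ image forces it to have finite order as a character on $\I_{\m_1}$; so it suffices to check that it factors through a ray class group modulo some power of $(a\s)$. This is the content of the classical reciprocity theory of power residue symbols: the $\s$-th power residue symbol $\left(\frac a\p\right)_\s$, viewed as a function of $\p$, is a ray class character of conductor dividing $(a\s)^{f}$ for a suitable exponent $f$ (for $\s\in\{3,4\}$ one can even take $f$ quite small via the explicit reciprocity laws for cubic and quartic residues, but we only need finiteness here). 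Since a finite-order Hecke character has trivial infinity type, the claimed vector of exponents $(k_\si,c_\si)=(0,0)$ is immediate. I would cite Hecke~\cite{Hecke} or Kubilyus~\cite{Kubilyus} for the statement that such ray class characters are abelian Hecke characters, and \cite[Sec.~8]{Ireland} for the fact that the conductor divides a power of $(a\s)$.

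For part~(ii), the key input is Weil's theorem identifying Jacobi sums with Hecke characters: for a prime ideal $\p$ of $K$ coprime to $(\s)$ with residue characteristic $p$, the Jacobi sum $J(\chi_{\s,\p},\chi_{\s,\p})$ is, up to the normalizing factor $(N\p)^{-1/2}$, the value at $\p$ of a Hecke character of $K$ whose conductor divides $(\s^2)$; this is precisely the ``Jacobi sum Hecke character'' constructed by Weil and treated in detail in \cite{book:GJS}. The unitarity for $\s\ge 3$ follows because $|J(\chi_{\s,\p},\chi_{\s,\p})|=(N\p)^{1/2}$ whenever $\chi_{\s,\p}^{\,2}\ne\uno$ — which holds for $\s\ge 3$ since $\chi_{\s,\p}$ has exact order $\s$ — so the normalized value has absolute value $1$. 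For the infinity type when $\s\in\{3,4\}$: by \eqref{eq:pi} and the discussion in \cref{sec:char:power} we have $\pi_{\s,p}=J(\chi_{\s,p},\chi_{\s,p})$ with $(\pi_{\s,p})=\p$ and $\pi_{\s,p}\overline{\pi_{\s,p}}=p=N\p$ in the principal ideal domain $\O_K=\Z[\zeta_\s]$, so $\J_\s(\p)=-\pi_{\s,p}/|\pi_{\s,p}|$ for a suitably chosen generator. One then checks on principal ideals $(\alpha)$ with $\alpha\equiv 1\pmod{\s^2}$ that the multiplicative choice of generator is forced to be $\alpha$ itself up to roots of unity killed by the congruence, whence $\J_\s((\alpha))=\pm\alpha/|\alpha|$, giving the infinity type $\chi_\infty(\alpha\otimes 1)=\alpha/|\alpha|$ (equivalently, vector of exponents with $k_{\mathrm{id}}=1$, $c_{\mathrm{id}}=-1$ and $k_{\bar{\mathrm{id}}}=c_{\bar{\mathrm{id}}}=0$ under a choice of embedding).

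The main obstacle I anticipate is the bookkeeping in part~(ii): correctly normalizing the Jacobi sum symbol so that $\p\mapsto \J_\s(\p)$ is genuinely multiplicative (not merely up to a unit) as $\p$ ranges over prime ideals, and pinning down the infinity type with the right sign and the right embedding, since the freedom in choosing the generator $\pi_{\s,p}$ of $\p$ and the dependence of $\chi_{\s,\p}$ on $\p$ versus $\overline\p$ must be tracked consistently. The cleanest route is to invoke the existing treatment of Jacobi sum Hecke characters in \cite{book:GJS} for the existence, unitarity, and conductor bound, and to derive only the explicit formula for the infinity type by the direct computation with $\pi_{\s,p}$ sketched above — this keeps the proof short while isolating the one genuinely paper-specific assertion (the shape $\chi_\infty(\alpha\otimes1)=\alpha/|\alpha|$) that later sections rely on. Everything else in the statement is standard and can be attributed to the cited references.
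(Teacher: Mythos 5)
Parts of your plan coincide with the paper's proof: statement (i) is indeed handled by a citation to class field theory (the paper quotes Koch rather than Hecke/Kubilyus, but the content is the same), and for (ii) the existence of the Hecke character, the defining ideal $(\s^2)$ and the unitarity for $\s\geq 3$ are likewise quoted from Weil \cite{Weil} (your unitarity argument via $\abs{J(\chi,\chi)}=(N\p)^{1/2}$ when $\chi^2\neq\uno$ is exactly the content of \cite[eq.~(10)]{Weil}).

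The gap is in the one step you elected to do by hand, the infinity type. From $(J(\chi_{\s,\p},\chi_{\s,\p}))=\p$ you only learn that, for $\alpha\equiv 1\pmod{\s^2}$, the number $\J_\s((\alpha))\,N((\alpha))^{1/2}=\prod_i\bigl(-J(\chi_{\s,\p_i},\chi_{\s,\p_i})\bigr)^{e_i}$ is \emph{some} generator of $(\alpha)$, i.e.\ equals $u\alpha$ with $u$ a root of unity of $K$. The congruence $\alpha\equiv 1\pmod{\s^2}$ constrains $\alpha$, not the Jacobi sums, so it cannot by itself ``kill'' $u$: to pin $u$ down you need the classical primarity congruences for Jacobi sums (e.g.\ $J(\chi_{3,\p},\chi_{3,\p})\equiv -1\pmod 3$, and the analogous congruence modulo a power of $(1+i)$ in the quartic case), the fact that distinct roots of unity remain distinct modulo the relevant modulus, and a separate evaluation at the degree-two (inert) primes, where the Jacobi sum is a unit multiple of the rational prime rather than of a ``new'' generator, and its unit must also be identified. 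None of this is in your sketch, and symptomatically your formula ends with $\pm\alpha/\abs{\alpha}$: the unresolved sign is precisely the issue, since the minus sign in the definition of $\J_\s$ is what makes the values primary, and the proposition asserts the infinity type is $\alpha/\abs{\alpha}$ exactly. The paper avoids this computation altogether: Weil gives explicit formulas for the exponents of the infinity type (\cite[eq.~(9) and p.~491]{Weil}), which for $\s\in\{3,4\}$ give $\J_\s((\alpha))=\bar\alpha^{0}\alpha^{1}N((\alpha))^{-1/2}=\alpha/\abs{\alpha}$ for $\alpha\equiv 1\pmod{\s^2}$. Either quote those formulas, or supply the primarity congruences (available in \cite{book:GJS} or \cite{Ireland}) to complete your hands-on route.
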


\begin{proof}
	Statement (i) is a consequence of Class Field Theory \cite[Theorem 1.13(8) in Ch. 2.\S~1.8, and Example 36 in Ch. 1.\S~6.3]{Koch}. 
	The assertions in (ii) follow instead from the work of Weil \cite{Weil}, as follows. 
	The fact that $\J_{\s}(\cdot)$ is a Hecke character is the main theorem of that paper: notice in particular that the minus sign in the definition of $\J_{\s}(\cdot)$ reflects the different sign convention for Jacobi sums in Weil's paper \cite[eq. (I)]{Weil} and in ours (\cref{def:Jacobi}). 
	The fact that $\J_{\s}(\cdot)$ is unitary for $s\geq 3$ follows from \cite[eq. (10)]{Weil}. 
	To compute the infinity type, Weil gives explicit general formulas in \cite[eq.(9) and the bottom of p.491]{Weil}. 
	According to these formulas, if $s\in\{3,4\}$, we get that 
	\[
	\J_{\s}((\alpha))= \bar{\alpha}^{\omega_1}\alpha^{\omega_2}N((\alpha))^{-1/2}
	\]
	for every $\alpha\in \O_K$ such that  $\alpha\equiv 1 \bmod {s^2}$, where 
	\[
	\omega_i=\left\lfloor \frac i s + \frac i s \right\rfloor.
	\]	
	Then, we have $\omega_1=0$ and $\omega_2=1$. 
	Since $N((\alpha))=\alpha \bar\alpha = \abs{\alpha}^2$, the claim is proved.    
\end{proof}

\subsection{Hecke L-functions}

Given 
a Hecke character $H$ of $K$, one considers the attached Hecke L-function 
\[
L(H,s) = \sum_{\mfk a\in \I_\m} H(\mfk a) (N\mfk a) ^{-s} = \prod_{\p\in\I_\m\cap\op{Spec}\O_K} (1 - H(\p)(N\p)^{-s})^{-1},
\]
where $\op{Spec} \O_K$ is the set of prime ideals of $\O_K$. 
Hecke L-functions form a class of relatively well-behaved L-functions. 
If $H$ is unitary then both the Dirichlet series and the infinite Euler product above converge absolutely on the right half plane $\Re(s)\geq 1$.
Moreover $L(H,s)$ has a meromorphic analytic continuation on all the complex plane, which is entire if $H$ is nontrivial. 

Analytic estimates for $L(H,s)$ can be given in terms of the ``size'' of the character $H$. 
Following Kubilyus \cite{Kubilyus} we fix arbitrarily a basis $\xxi = (\xi_1,\ld,\xi_{d-1})$ of the group of Hecke characters of the first kind, so that every Hecke character $H$ can be written uniquely as 
\begin{equation}\label{Hecke:basis}
	H = \chi \xi_1^{m_1}, \ld, \xi_{d-1}^{m_{d-1}},
\end{equation}
for some abelian character $\chi$ and some integers $m_i\in\Z$. 
Then we define the \emph{size of $H$ with respect to $\xxi$} by 
\begin{equation} \label{def:size}
v_\xxi(H):= \prod_{i=1}^{d-1}(\abs {m_i}+3). 
\end{equation}

With this notation, a classical result concerning zero-free regions of Hecke L-functions states that $L(H,s)\neq 0$ if $\s=\sigma + it$ satisfies
\begin{equation}\label{eq:zerofree}
\sigma \great 1-\frac{c(K,\xxi)}{\log (\abs{t}+3) + \log v_\xxi(H)},
\end{equation}
for some constant $c(K,\xxi)$ independent of $H$ \cite[Lemma 2]{Kubilyus}. 
%
In fact, more recent results for zero-free regions of Hecke L-functions are available, which provide more precise estimates than \eqref{eq:zerofree} both in the $v_\xxi$ and $t$ aspects \cite{Coleman,zeroregion} (see also \cref{rmk:Hecke:size}). 
For more about the theory of (Hecke) L-functions, see \cite{Hecke}, \cite[Chapter 5.10]{IK}, \cite{Lang} or \cite{MurtyBook}. 

\begin{remark}\label{rmk:Hecke:size}
	In the literature there is no universally accepted notation for the ``size'' of an Hecke character. 
	For example, Coleman \cite{Coleman} defines it as the $L^2$-norm of some suitable vector of exponents $(k_\si,c_\si)_\si$ of the Hecke character, while Mitsui \cite{Mitsui} uses a quantity related to the $L^1$-norm of this vector of exponents. 
	In the book of Iwaniec and Kowalski \cite{IK}, instead, the role of $v_\xxi(H)$ is played by the analytic conductor $\qq(H)$. 
	The analytic conductor is a quantity that is computed in terms of the norm of the algebraic conductor of $H$ (which is the largest defining ideal of $H$, with respect to inclusion) and the $\gamma$-factors of the functional equation of the $L(H,s)$. 
	In fact, all these notions are related. 
	For example, Hecke \cite{Hecke} describes the Hecke characters of the first kind by means of explicit formulas, which themselves are given in terms of the choice of a basis for the units of $\O_K$. 
	From such description one can explicit a choice of a basis $\xxi$ for the set of Hecke characters of the first kind. 
	Furthermore, Hecke provides explicit formulas for the $\gamma$-factors of $L(H,s)$. 
	From these formulas it is possible to verify that 
	\[
	\log v_\xxi(H) \asymp \log \qq(H),
	\]
	where the implied constant may depend on $K$ and $\m$, but is independent of $H$. 
	Similar considerations apply to the ``sizes'' defined by Coleman and Mitsui. 
\end{remark}

\subsection{Asymptotic estimates}\label{sec:asymp}

We are interested in Hecke characters primarily because they give access to the following version of the prime number theorem \cite[Lemma 4]{Kubilyus}.  
\begin{lemma}\label{pnt}
Let $\m$ be an ideal of $\O_K$ and let  $\xxi$ be a basis of $G^{(1)} (K,\m)$. 
Then there are effective constants $c_1(K,\xxi), c_2(K,\xxi)\great 0$ 
such that for each nontrivial unitary Hecke character $H\in G(K,\m)$ and every $T\geq 2$ we have 
\begin{equation}\label{eq:pnt}
\left|   
\sum_{\substack{\p\in\I_\m\cap \op{Spec}\O_K\\N\p\leq T}}  H(\p) 
\right|
    \leq 
c_1(K,\xxi) \,T\, \exp\left( \frac{-2c_2(K,\xxi)\, {\log T}}{\log v_\xxi(H) + \sqrt {\log T}}\right).
\end{equation}
\end{lemma}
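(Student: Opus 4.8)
The plan is to follow the classical route to the prime number theorem with error term, in the shape due to de la Vall\'ee Poussin and Landau, carried out uniformly in the conductor of $H$; this is essentially what Kubilyus does. Write $v:=v_\xxi(H)$, let $\Lambda_K$ be the von Mangoldt function of $\O_K$ (supported on prime powers $\p^k$ with $\p\nmid\m$, where it takes the value $\log N\p$), and set $\psi(T,H):=\sum_{N\mfk a\leq T}\Lambda_K(\mfk a)H(\mfk a)$. Taking the logarithmic derivative of the Euler product gives $-\frac{L'}{L}(H,s)=\sum_{\mfk a\in\I_\m}\Lambda_K(\mfk a)H(\mfk a)(N\mfk a)^{-s}$ for $\Re(s)>1$, so a truncated Perron formula yields
\[
\psi(T,H)=\frac{1}{2\pi i}\int_{\sigma_0-iU}^{\sigma_0+iU}\Bigl(-\frac{L'}{L}(H,s)\Bigr)\frac{T^s}{s}\,ds+R,
\]
with $\sigma_0:=1+1/\log T$, a height $U\geq 2$ to be chosen, and $R$ a truncation error controlled by the usual tail estimates for the Dirichlet series, of size $\ll \frac{T}{U}(\log(vU))^2$. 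Since $H$ is nontrivial, $L(H,s)$ is entire, so shifting the contour leftward produces \emph{no main term}: the only relevant analytic feature is the absence of zeros.

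Next I would assemble the analytic input. Hecke's functional equation together with the Phragm\'en--Lindel\"of principle gives polynomial growth of $L(H,s)$ in $|t|$ and in the analytic conductor $\qq(H)$ in any fixed vertical strip; by \cref{rmk:Hecke:size} we may replace $\log\qq(H)$ by $\log v$ throughout. Feeding this and the zero-free region \eqref{eq:zerofree} into the standard Borel--Carath\'eodory/Landau estimate for the logarithmic derivative of an $L$-function near a zero-free region yields
\[
\Bigl|\frac{L'}{L}(H,s)\Bigr|\ll \log\bigl(v(|t|+3)\bigr)
\]
uniformly for $s=\sigma+it$ with $\sigma\geq 1-\tfrac12\delta(|t|)$, where $\delta(y):=c(K,\xxi)\bigl(\log(y+3)+\log v\bigr)^{-1}$ is half the width of the zero-free region at height $y$. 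I expect this step --- making the dependence of the $L'/L$ bound on $v$ completely explicit and uniform in $H$ --- to be the main technical obstacle; the rest is a balancing of parameters.

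With that in hand I would move the contour to the broken line $\Re(s)=1-\tfrac12\delta(U)$ for $|t|\leq U$, which lies inside the zero-free region. No residue is picked up: $L'/L$ has no pole there since $H$ is nontrivial and $L(H,s)$ is entire, and the pole of $T^s/s$ at $s=0$ lies to the left of the new contour because $0<1-\tfrac12\delta(U)$. The vertical part contributes $\ll T^{\,1-\delta(U)/2}(\log(vU))^2$ and the two horizontal segments $\ll \tfrac{T}{U}(\log(vU))^2$, so together with $R$,
\[
|\psi(T,H)|\ll (\log(vU))^2\Bigl(T\exp\bigl(-\tfrac12\delta(U)\log T\bigr)+\tfrac{T}{U}\Bigr).
\]
Since $\exp\bigl(-\tfrac12\delta(U)\log T\bigr)=\exp\bigl(-\tfrac12 c(K,\xxi)\log T/(\log v+\log(U+3))\bigr)$, choosing $\log U\asymp\sqrt{\log T}$ balances this against $1/U=\exp(-\log U)$, and a short computation gives a bound of exactly the shape $T\exp\bigl(-2c_2(K,\xxi)\log T/(\log v+\sqrt{\log T})\bigr)$, after absorbing the factor $(\log(vU))^2$ into a slightly smaller $c_2$ and the implied constant into $c_1$.

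Finally, I would pass from the weighted sum $\psi(T,H)$ to $\sum_{N\p\leq T}H(\p)$. The prime-power terms with $k\geq 2$ contribute $\ll \sqrt{T}$ by Chebyshev's bound for $\O_K$, so $\sum_{N\p\leq T}H(\p)\log N\p=\psi(T,H)+O(\sqrt T)$; removing the weight $\log N\p$ by partial summation, and using that the exponential factor in the bound for $\psi(u,H)$ is nondecreasing in $u$, only gains a factor $\ll 1/\log T$ and leaves the error term of the same form. This establishes \eqref{eq:pnt}. All constants produced are effective because the inputs --- the zero-free region \eqref{eq:zerofree}, the functional equation, and the convexity bound --- are.
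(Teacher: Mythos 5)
Your proposal is the standard Perron-formula/contour-shift argument driven by the zero-free region \eqref{eq:zerofree}, which is exactly what the paper does: it gives no independent proof but cites Kubilyus's Lemma~4 and the ``standard arguments'' of Hecke and \cite[Thm 5.13]{IK}, so your route and the paper's coincide. One small slip to fix in the last step: the factor $\exp\bigl(-c\log u/(\log v_\xxi(H)+\sqrt{\log u})\bigr)$ is \emph{decreasing} in $u$, not nondecreasing, so in the partial summation you should instead split the integral at $\sqrt{T}$, bound the range $u\leq\sqrt T$ trivially by $\sqrt T$, and use the bound at $u=\sqrt T$ on the upper range; this preserves the shape of \eqref{eq:pnt} at the cost of shrinking the constant $c_2$, which is harmless.
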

\Cref{pnt} is proved via standard arguments concerning zero-free regions of Hecke L-functions \cite{Hecke} \cite[Thm 5.13]{IK}, using \eqref{eq:zerofree}. 
Refinements can be given using the more precise estimates for zero-free regions due to Coleman et al.\  \cite{Coleman,zeroregion}.  

Assuming that $v_\xxi(H)\leq \sqrt {\log T}$, 
the expression on the right-hand side of \eqref{eq:pnt} simplifies to 
\[
c_1 T e^{- c_2 \sqrt{\log T}}. 
\]
The strength of \cref{pnt} is appreciated by noticing that the number of summands in the left-hand side of \eqref{eq:pnt} is asymptotic to $T/\log T$ by a classical theorem of Landau.  

We now collect some estimates that can be easily checked by partial integration-summation (e.g. \cite[Thm 421, 22.5.2]{HardyWright}). It is useful, in order to simplify the calculations and the final estimates, to use the fact that 
\begin{equation}\label{eq:error:log}
e^{- \alpha \sqrt{\log T}} = o((\log T)^{-A}),
\end{equation}
for any fixed $A,\alpha\great 0$ and for $T\to\infty$.

\begin{lemma}\label{asymptotic}
Let $\mcl A\subseteq \N$ be a set of positive integers such that for $T\to \infty$ the following estimate holds, for some $c,d\great 0$,  and where $\Li(T):=\int_{2}^{T} dx/\log x$:
\[
\#\mcl A \cap [1,T] = c \Li(T) + O(Te^{- d \sqrt{\log T}}).
\]
Then:
\begin{alignat}{2}
\label{eq:asymp:log}
	& \sum_{p\in\mcl A\cap [1,T]} \log p 
	&&= (c+o(1)) T;%
\\
\label{eq:asymp:1/2} 
	&\sum_{p\in\mcl A\cap [1,T]} p^{-1/2} 
	&&= (2c+o(1)) \frac{\sqrt T}{\log T};
\\
\label{eq:asymp:1}
	&\sum_{p\in\mcl A\cap [1,T]} p^{-1} 
	&&= c\log\log T + O(1);%
\\
\label{eq:asymp:3/2}
	&\sum_{p\in\mcl A\cap [1,T]} p^{-3/2} 
	&&= O(1);%
\\
\label{eq:asymp:2}
	&\sum_{p\in\mcl A\cap [1,T]} p^{-2} 
	&&= O(1).
\end{alignat}
\end{lemma}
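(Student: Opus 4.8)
The plan is to derive the five estimates of \cref{asymptotic} from the hypothesis
\[
\pi_{\mcl A}(T) := \#\mcl A\cap[1,T] = c\Li(T) + O\!\left(Te^{-d\sqrt{\log T}}\right)
\]
by partial summation (Abel summation), treating each sum as a Riemann--Stieltjes integral $\int_{2^-}^{T} g(x)\,d\pi_{\mcl A}(x)$ with $g(x)=\log x$, $x^{-1/2}$, $x^{-1}$, $x^{-3/2}$, $x^{-2}$ respectively. The general scheme in each case is: write the sum as $g(T)\pi_{\mcl A}(T) - \int_2^T g'(x)\pi_{\mcl A}(x)\,dx$, substitute the asymptotic for $\pi_{\mcl A}$, split into the main term (involving $\Li(x)$) and the error term (involving $xe^{-d\sqrt{\log x}}$), evaluate the main term by a further integration by parts against $\Li'(x)=1/\log x$, and bound the error term using \eqref{eq:error:log}, i.e.\ $e^{-\alpha\sqrt{\log T}}=o((\log T)^{-A})$ for every $A>0$.

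First I would record the Stieltjes form: for any $C^1$ function $g$ on $[2,T]$,
\[
\sum_{p\in\mcl A\cap[2,T]} g(p) = g(T)\pi_{\mcl A}(T) - \int_2^T g'(x)\,\pi_{\mcl A}(x)\,dx.
\]
For \eqref{eq:asymp:log} with $g(x)=\log x$: the boundary term is $(\log T)(c\Li(T)+O(Te^{-d\sqrt{\log T}}))$, and since $\Li(T)\sim T/\log T$ this contributes $(c+o(1))T$, while $\int_2^T \pi_{\mcl A}(x)/x\,dx = c\int_2^T \Li(x)/x\,dx + (\text{error}) = O(T/\log T) = o(T)$ after noting $\int_2^T\Li(x)/x\,dx \asymp T/\log T$; the error integral $\int_2^T x^{-1}\cdot xe^{-d\sqrt{\log x}}\,dx = \int_2^T e^{-d\sqrt{\log x}}\,dx = o(T)$ as well. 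For \eqref{eq:asymp:1/2} with $g(x)=x^{-1/2}$, $g'(x)=-\tfrac12 x^{-3/2}$: the boundary term is $T^{-1/2}(c\Li(T)+\cdots)=(c+o(1))\sqrt T/\log T$, and $\tfrac12\int_2^T x^{-3/2}\pi_{\mcl A}(x)\,dx = \tfrac{c}{2}\int_2^T x^{-3/2}\Li(x)\,dx + o(\sqrt T/\log T)$; integrating $\int_2^T x^{-3/2}\Li(x)\,dx$ by parts gives $\big[-2x^{-1/2}\Li(x)\big]_2^T + 2\int_2^T x^{-1/2}/\log x\,dx \sim (c+o(1))\cdot 2\sqrt T/\log T$ contributions that combine to the stated $(2c+o(1))\sqrt T/\log T$; the error integral $\tfrac12\int_2^T x^{-3/2}\cdot x e^{-d\sqrt{\log x}}\,dx = \tfrac12\int_2^T x^{-1/2}e^{-d\sqrt{\log x}}\,dx = o(\sqrt T/\log T)$ via \eqref{eq:error:log}. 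For \eqref{eq:asymp:1} with $g(x)=x^{-1}$: the boundary term is $O(1/\log T)$, the main integral is $c\int_2^T \Li(x)/x^2\,dx$, and integrating by parts $\int_2^T\Li(x)x^{-2}\,dx = \big[-x^{-1}\Li(x)\big]_2^T + \int_2^T x^{-1}/\log x\,dx$, whose last integral is $\log\log T + O(1)$ by the substitution $u=\log x$; the error integral $\int_2^T x^{-2}\cdot xe^{-d\sqrt{\log x}}\,dx = \int_2^T x^{-1}e^{-d\sqrt{\log x}}\,dx$ converges (again by \eqref{eq:error:log}, or directly $u=\log x$ gives $\int e^{-d\sqrt u}\,du<\infty$), so altogether $c\log\log T + O(1)$. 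Finally \eqref{eq:asymp:3/2} and \eqref{eq:asymp:2} are the easiest: for $g(x)=x^{-\theta}$ with $\theta>1$ the boundary term is $o(1)$ and $\theta\int_2^\infty x^{-\theta-1}\pi_{\mcl A}(x)\,dx$ converges absolutely because $\pi_{\mcl A}(x)=O(x)$, giving $O(1)$.

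The main obstacle — though a mild one — is bookkeeping the error integrals of the shape $\int_2^T x^{\theta} e^{-d\sqrt{\log x}}\,dx$ for $\theta \in\{0,-1/2,-1\}$ and confirming they are genuinely $o$ of the respective main terms (or $O(1)$ for $\theta<0$ small enough). The clean way is the substitution $u=\log x$, turning these into $\int e^{(\theta+1)u - d\sqrt u}\,du$; when $\theta+1>0$ the integrand is $e^{(\theta+1)u}\cdot o(u^{-A})$ for all $A$, so the integral up to $u=\log T$ is $o(T^{\theta+1}(\log T)^{-A})$, which beats the main term (of order $T^{\theta+1}/\log T$) by taking $A$ large; when $\theta+1\le 0$ it converges outright. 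One should also check that the main-term integrals like $\int_2^T\Li(x)x^{-\theta-1}\,dx$ really are $\asymp$ the claimed order and that the secondary boundary terms at $x=2$ are absorbed into the $O(1)$ or $o(\cdot)$; these are routine once $\Li(x)\sim x/\log x$ and $\int_2^T x^{-\theta}/\log x\,dx$ are evaluated by the same $u=\log x$ substitution. No deeper input is needed — the content of the lemma is entirely in the partial-summation manipulation and the elementary estimate \eqref{eq:error:log}.
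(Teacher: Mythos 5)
Your proposal is correct, and it is exactly the argument the paper has in mind: the paper merely remarks that these estimates "can be easily checked by partial integration-summation" (citing Hardy--Wright) together with the bound \eqref{eq:error:log}, which is precisely the Abel-summation-plus-$u=\log x$ bookkeeping you carry out, with the right constants ($c$, $2c$, $c$) in each case. No further comparison is needed.
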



\subsection{Equidistribution}\label{sec:ET}

The estimates coming from \cref{pnt} will be used to show that the values of some Hecke characters equidistribute on the unit circle. Classical tools to prove such results are Weyl's equidistribution lemma or its quantitative version due to Erd{\H{o}}s and Tur{\'a}n \cite[Theorem III]{ET}. The following proposition is a direct consequence of the Erd{\H{o}}s-Tur{\'a}n equidistribution lemma.

\begin{lemma}\label{ET}
Let $\{h_a\}_{a\in\mcl A}$ be a sequence of complex numbers of modulus 1 indexed by a finite set $\mcl A$ and for every  $n\in\N_+$ let $S_n:=\sum_{a\in\mcl A} \Re(h_a^n)$. Let $\phi_1,\phi_2$ be real numbers satisfying $0\leq \phi_1\less\phi_2\leq \pi$. 
Then 
\[
\#\{a\in\mcl A:\ \Re h_a \in [\cos\phi_2,\cos\phi_1]\} = \frac {\phi_2-\phi_1}{\pi}\#\mcl A + E
\] 
with 
\[
\abs{E} \leq C\left( \frac{\#\mcl A}{N} + \sum_{n=1}^N \frac 1 n \abs{S_n}\right)
\]
for every $N\in\N_+$ and for an absolute constant $C\great 0$.
\end{lemma}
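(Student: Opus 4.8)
The plan is to deduce Lemma~\ref{ET} directly from the Erd\H{o}s--Tur\'an equidistribution lemma \cite[Theorem III]{ET}, which we may treat as a black box. Recall the shape of that result: if $\theta_1,\ldots,\theta_M$ are points in $[0,1)$ (or, equivalently, in $\R/\Z$), then for every positive integer $N$ the discrepancy
\[
D_M := \sup_{0\leq a\leq b\leq 1}\left| \frac{\#\{j:\ \theta_j\in[a,b]\}}{M} - (b-a)\right|
\]
satisfies
\[
D_M \leq C\left(\frac{1}{N} + \sum_{n=1}^N \frac{1}{n}\left|\frac{1}{M}\sum_{j=1}^M e^{2\pi i n\theta_j}\right|\right)
\]
for an absolute constant $C>0$. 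So the first step is purely bookkeeping: for each $a\in\mcl A$ write $h_a = e^{2\pi i \theta_a}$ with $\theta_a\in[0,1)$, apply the above with $M=\#\mcl A$, and clear the denominator $M$ inside the sum.

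The one genuine point requiring care is that our statement is about the \emph{real part} $\Re h_a$ lying in an interval $[\cos\phi_2,\cos\phi_1]$, i.e. about the argument $\theta_a$ lying in the union $[\phi_1/2\pi,\phi_2/2\pi]\cup[1-\phi_2/2\pi,1-\phi_1/2\pi]$ of \emph{two} arcs symmetric about the real axis (collapsing to one arc only in the degenerate cases $\phi_1=0$ or $\phi_2=\pi$). Thus I would apply the discrepancy bound to each of the two arcs separately and add; each contributes an arc of length $(\phi_2-\phi_1)/2\pi$, so the two together give total measure $(\phi_2-\phi_1)/\pi$, matching the main term in the statement, and the error is at most $2D_M \cdot M$. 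Then, to convert the exponential sums $\frac1M\sum_j e^{2\pi i n\theta_j} = \frac1M\sum_{a\in\mcl A} h_a^n$ appearing in the Erd\H{o}s--Tur\'an bound into the quantities $S_n = \sum_{a\in\mcl A}\Re(h_a^n)$, observe that the points $\theta_a$ come in conjugate-symmetric... well, they need not, so instead I simply bound $\bigl|\sum_a h_a^n\bigr| \le \bigl|\Re\sum_a h_a^n\bigr| + \bigl|\Im\sum_a h_a^n\bigr|$; but a cleaner route, which is what I would actually write, is to note that after applying the lemma to the union of the arc and its mirror image we may equivalently symmetrize: replacing the family $\{h_a\}$ by $\{h_a\}\cup\{\bar h_a\}$ doubles $M$ and makes the exponential sums equal to exactly $\sum_a (h_a^n + \bar h_a^n) = 2S_n$, while the counting function for the symmetrized family over the arc $[\phi_1/2\pi,\phi_2/2\pi]$ equals the counting function we want. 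This makes the $|S_n|$ appear on the nose and absorbs the factor $2$ into the constant $C$.

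Carrying this out, the bound becomes $|E|\le C'(\#\mcl A/N + \sum_{n=1}^N n^{-1}|S_n|)$ with $C'$ absolute, which is exactly the claim after renaming $C'$ to $C$. The main (and only real) obstacle is the two-arc issue just discussed — i.e. matching the region $\{\Re h_a\in[\cos\phi_2,\cos\phi_1]\}$ to intervals on $\R/\Z$ and making sure the symmetrization produces $S_n$ rather than a complex exponential sum — and handling the edge cases $\phi_1 = 0$, $\phi_2=\pi$ where one of the two arcs degenerates to a point or the two arcs touch; in all cases the stated inequality holds with the same form, since a larger measured set only helps the main term match and the discrepancy bound is uniform over all subintervals. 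Everything else is a direct quotation of \cite[Theorem III]{ET} together with the trivial estimate $\bigl|\frac1M\sum_a h_a^n\bigr|\le \frac1M\bigl|\sum_a h_a^n\bigr|$.
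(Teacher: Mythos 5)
Your route is the same as the paper's: the paper gives no argument at all, simply asserting the lemma as a direct consequence of Erd\H{o}s--Tur\'an \cite[Theorem III]{ET}, and your reduction (write $h_a=e^{2\pi i\theta_a}$, symmetrize to $\{h_a\}\cup\{\bar h_a\}$ so the Weyl sums become $2S_n$, apply the discrepancy bound to the single arc $[\phi_1/2\pi,\phi_2/2\pi]$, absorb factors of $2$ into $C$) is correct and is exactly how one would flesh that citation out; in the non-degenerate case $0<\phi_1<\phi_2<\pi$ the symmetrized count over that arc equals the count in the statement on the nose. The one place where your justification is off is the degenerate cases: when $\phi_1=0$ or $\phi_2=\pi$, any $a$ with $h_a=1$ (resp. $h_a=-1$) has both symmetrized copies landing at the arc endpoint, so the symmetrized count \emph{overcounts} the desired quantity by $\#\{a: h_a=1\}$ (resp. $\#\{a: h_a=-1\}$); saying "a larger measured set only helps the main term match" does not address this, since the main term is fixed and the overcount goes straight into $E$. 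The fix is one line: by the uniformity of the discrepancy bound over all subintervals (apply it to $[0,\epsilon]$ or $[\tfrac12,\tfrac12+\epsilon]$ and let $\epsilon\to 0$, or use a Fej\'er-kernel positivity argument), the number of symmetrized points sitting exactly at angle $0$ or $\tfrac12$ is at most $2\#\mcl A\cdot D_{2\#\mcl A}$, hence the overcount is itself bounded by the Erd\H{o}s--Tur\'an error and absorbed into $C$. This is worth writing out, because the case $\phi_2=\pi$ is precisely the one invoked later in the paper (in the proofs of the equidistribution propositions for $\Hp$ and $\Hpp$).
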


See \cite[Chapter 5]{IK} for a general discussion on L-functions and equidistribution and \cite[Exercise 3.2]{MurtyBook} for more precise versions of the  Erd{\H{o}}s-Tur{\'a}n inequality. 
In this article, the above results will be used to show the equidistribution of $\Hp$ and $\Hpp$ of \cref{erre:modp:3,erre:modp:4} as $p,q$ vary. 
In other words, equidistribution of Jacobi sum symbols at prime elements. 
We remark that there are also equidistribution results for Gauss sums, which in turn are related to a famous problem of Kummer \cite{Moreno,HeathPatterson,Patterson}. 

%
%

\section{Exceptional forms and the term $\Kpp$}\label{sec:few}
\subsection{Exceptional biquadratic diagonal forms}\label{sec:exceptional}

In this paragraph we study a special family of biquadratic diagonal forms.

\begin{definition}\label{def:exceptional}
We say that a biquadratic diagonal form $\form$ is exceptional if there are positive integers $a,b,c_1,c_2,c_3,c_4$ and a permutation $\sigma\in\mfk S_4$ such that 
\[\form= a(c_1x_{\si(1)})^4+b(c_2x_{\si(2)})^4+4a(c_3x_{\si(3)})^4+4b(c_4x_{\si(4)})^4.\]
\end{definition}

We will prove the following characterization of exceptional forms.
\begin{theorem}\label{thm:exceptional:iff}
A biquadratic diagonal form $\form$ is exceptional if and only if for all prime numbers $q\not\in \Sigma_\sform$ we have $\sol_\sform(0,q)\geq q^3$.
\end{theorem}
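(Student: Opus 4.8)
The strategy is to prove both implications using the explicit formula \eqref{eq:modp:4} for $\sol_\sform(0,q)$ from \cref{erre:modp:4}. Write $\sol_\sform(0,q) = q^3 + (2\rhopp + \Kpp)q(q-1)$ with $\Kpp = b_{\sform,q} + \chipp(-1)c_{\sform,q}$. Since $q(q-1)>0$, the condition $\sol_\sform(0,q)\geq q^3$ for \emph{all} $q\not\in\Sigma_\sform$ is equivalent to $2\rhopp + \Kpp \geq 0$ for all such $q$. Recall from the discussion after \cref{erre:modp:4} that $\rhopp\in[-1,1]$, and that for all $\rho\in(-1,1)$ the inequality $\rhopp<\rho$ holds for a positive proportion of primes $q$ (this is the equidistribution result established via \cref{sec:hecke}). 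The plan is therefore to show: (a) if $\form$ is exceptional then $\Kpp\geq 2$ for all $q\not\in\Sigma_\sform$ with $q\equiv 1\pmod 4$, forcing $2\rhopp+\Kpp > 0$; (b) conversely if $\form$ is not exceptional then there is a positive proportion of primes $q$ for which $\Kpp\leq 1$, and — combining with the equidistribution of $\rhopp$ near $-1$ — a positive proportion for which $2\rhopp + \Kpp < 0$, giving $\sol_\sform(0,q) < q^3$.

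\textbf{The forward direction (exceptional $\Rightarrow$ $\sol_\sform(0,q)\geq q^3$).} Suppose $\form = a(c_1x_1)^4 + b(c_2x_2)^4 + 4a(c_3x_3)^4 + 4b(c_4x_4)^4$ (up to permutation, which does not affect $\sol_\sform(0,q)$). For a prime $q\not\in\Sigma_\sform$ with $q\equiv 1\pmod 4$, compute $\chipp(\ua)$ where $\ua = (ac_1^4, bc_2^4, 4ac_3^4, 4bc_4^4)$. Since $c_i^4$ is a fourth power, $\chipp(c_i^4) = 1$, so $\chipp(\ua) = (\chipp(a), \chipp(b), \chipp(4)\chipp(a), \chipp(4)\chipp(b))$. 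Now $4 = 2^2$, so $\chipp(4) = \chippl(2) \in\{1,-1\}$ (the quadratic residue symbol). Hence $\chipp(\ua) = \lambda\cdot(\chipp(a),\chipp(b),\pm\chipp(a),\pm\chipp(b))$, i.e. after scaling by $\lambda = \chipp(a)^{-1}$ we land on a quadruple of the shape $(1, u, \pm 1, \pm u)$ with $u = \chipp(b)\chipp(a)^{-1}\in\mu_4$ and the \emph{same} sign in positions $3$ and $4$. One checks case by case that every such class lies in $\{U_1, U_2, U_4, U_6, U_8\}$: for $u=1$ we get $[(1,1,1,1)]=U_1$ or $[(1,1,-1,-1)]=U_4$; for $u=-1$ we get $U_4$ or $U_1$; for $u=\pm i$ we get $[(1,i,1,i)]=U_6$ or $[(1,i,-1,-i)]=U_8$. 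For these five classes, the table gives $b_{\sform,q}+c_{\sform,q}$ and $b_{\sform,q}-c_{\sform,q}$, and inspecting the last two columns one sees that in each of these rows \emph{both} $b_{\sform,q}+c_{\sform,q}\geq 2$ and $b_{\sform,q}-c_{\sform,q}\geq 2$ — indeed the values are $(19,-5)$ for $U_1$... wait: $U_1$ has $b-c = -5$, which is \emph{not} $\geq 2$. So one must be more careful: the sign $\chipp(-1)$ is not free — it is $+1$ if $q\equiv 1\pmod 8$ and $-1$ if $q\equiv 5\pmod 8$, but which of $b+c$, $b-c$ is relevant is tied to $q\bmod 8$, and simultaneously $\chipp(4) = \chippl(2)$ is \emph{also} determined by $q\bmod 8$ ($2$ is a fourth power mod $q$ iff... $\chippl(2)=1$ iff $q\equiv \pm1\pmod 8$). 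So for $q\equiv 1\pmod 8$: $\chipp(-1)=1$ and $\chipp(4)=\chippl(2)=1$, so the class is among $\{U_1,U_4,U_6\}$ (same sign $+$), and we need $\Kpp = b+c \geq 2$, i.e. $19, 3, 7$ — all $\geq 2$. Good. For $q\equiv 5\pmod 8$: $\chipp(-1)=-1$ and $\chippl(2)=-1$, so the class is among $\{U_4,U_1,U_8\}$ (same sign $-$), and we need $\Kpp = b - c\geq 2$, i.e. $11, -5, 7$. Here $U_1$ gives $-5$! This means I have the case analysis slightly off — presumably the correct bookkeeping shows $U_1$ does not actually occur for $q\equiv 5\pmod 8$ with this form, or the relevant product $ab$ forces $u\neq 1$; in any case the clean statement to aim for is: \emph{for every $q\not\in\Sigma_\sform$, $q\equiv 1\pmod 4$, the pair $(\,[\chipp(\ua)],\, \chipp(-1)\,)$ is such that $\Kpp = b_{\sform,q} + \chipp(-1)c_{\sform,q} \geq 2$}, which follows by a finite check once one correctly tracks the joint constraint relating $[\chipp(\ua)]$ and $q\bmod 8$ through $\chippl(2)$. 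Then $2\rhopp + \Kpp \geq -2 + 2 = 0$, so $\sol_\sform(0,q) \geq q^3$.

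\textbf{The converse ($\sol_\sform(0,q)\geq q^3$ for all $q\Rightarrow$ exceptional).} Argue by contrapositive: suppose $\form$ is not exceptional. I want a positive proportion of primes $q$ with $\sol_\sform(0,q) < q^3$, equivalently $2\rhopp + \Kpp < 0$. The key input from \cref{sec:plan:1} / \cref{sec:kummer} (which I may assume, as it is stated earlier and is exactly the content behind \cref{thm:main:intro:4}) is that non-exceptionality implies $\Kpp \leq 1$ for a positive proportion of primes $q$. Fix such a positive-density set $\P'$ of primes with $\Kpp\leq 1$ on $\P'$. Since $\Kpp$ is integer-valued and $\leq 1$, on $\P'$ we have $\Kpp \leq 1$, so $2\rhopp + \Kpp < 0$ whenever $\rhopp < -1/2$. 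By the equidistribution of $\rhopp$ on $[-1,1]$ (proved in \cref{sec:equidistribution}, valid on any positive-density subset of primes defined by congruence/splitting conditions — here one needs equidistribution to persist on $\P'$, which is where Step 1's refinement via Hecke $L$-functions and Chebotarev over the compositum of the cyclotomic and Kummer fields is used), the set $\{q\in\P' : \rhopp < -1/2\}$ has positive density. On this set $\sol_\sform(0,q) < q^3$, contradicting the hypothesis. Hence $\form$ must be exceptional. \textbf{The main obstacle} is making the converse's last step rigorous: one needs the equidistribution of $\Hpp$ to hold \emph{jointly} with the Kummer-theoretic conditions that guarantee $\Kpp\leq 1$, i.e. equidistribution of the Jacobi-sum Hecke character $\J_4$ along the primes lying in a prescribed Chebotarev class of $\op{Gal}(L/K)$. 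This requires twisting the Hecke character by the relevant abelian (Kummer) characters and applying \cref{pnt} and \cref{ET} to each twist — the same machinery as \cref{prop:step1}, but one must verify the twisted characters remain nontrivial, which is precisely where the non-exceptionality hypothesis re-enters. The forward direction, by contrast, is a purely finite verification with the table.
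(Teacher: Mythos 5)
Your converse direction is sound and is essentially the paper's own argument: non-exceptionality gives, via Kummer theory and Chebotarev, a positive-density set $\P_{\sform,\u}$ on which $\Kpp\leq 1$, and the equidistribution of $\rhopp$ along that set (proved by twisting the Jacobi-sum Hecke character by the power-residue characters, i.e.\ \cref{equi:4}) then produces primes with $2\rhopp+\Kpp<0$; this is exactly how \cref{main:modp} completes the sufficiency. The genuine gap is in the forward direction. Your reduction ``$\sol_\sform(0,q)\geq q^3$ for all $q\notin\Sigma_\sform$ iff $2\rhopp+\Kpp\geq 0$ for all such $q$'' is only meaningful for $q\equiv 1\pmod 4$: for $q\equiv 3\pmod 4$ the character $\chipp$ and hence $\rhopp,\Kpp$ and formula \eqref{eq:modp:4} are not defined, and your argument never treats these primes at all. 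This case is not automatic: by \cref{1144:3mod4}, for $q\equiv 3\pmod 4$ one has $\sol_\sform(0,q)=q^3+\left(\frac{a_1a_2a_3a_4}{q}\right)q(q-1)$, so the claimed inequality holds precisely because the product of the coefficients of an exceptional form, $a\cdot b\cdot 4a\cdot 4b\cdot(c_1c_2c_3c_4)^4$, is a perfect square, forcing the Legendre symbol to be $+1$. Your proof neither invokes nor establishes this, so as written it does not prove that exceptional forms satisfy the condition at all primes outside $\Sigma_\sform$.

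Within $q\equiv 1\pmod 4$ your table computation is also left unresolved, though it can be repaired: since $\chipp(4)=\chippl(2)$, for $q\equiv 1\pmod 8$ the class $[\chipp(\ua)]$ has the shape $[(1,u,1,u)]$ for some $u\in\mu_4$, hence lies in $\{U_1,U_4,U_6\}$ and $\Kpp=b_{\sform,q}+c_{\sform,q}\in\{19,3,7\}$; for $q\equiv 5\pmod 8$ it has the shape $[(1,u,-1,-u)]$, which is $U_4$ for $u=\pm1$ (note $(1,-1,-1,1)\sim U_4$, not $U_1$ --- this is where your list $\{U_4,U_1,U_8\}$ goes wrong) and $U_8$ for $u=\pm i$, so $\Kpp=b_{\sform,q}-c_{\sform,q}\in\{11,7\}$. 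In all cases $\Kpp\geq 2$, hence $2\rhopp+\Kpp\geq 0$; but your proposal leaves this exact verification as ``presumably the correct bookkeeping shows\ldots'', which is the heart of the finite check. For comparison, the paper's proof of this half (\cref{1144:1mod4}) bypasses the table and the bound $\rhopp\geq-1$ entirely: for $q\equiv1\pmod4$ it chooses $\lambda\in\FF_q$ with $\lambda^4=-4$, transforms the exceptional form into $ax_1^4+bx_2^4-ax_3^4-bx_4^4$, and obtains $\sol_\sform(0,q)=\sum_{t\in\FF_q}n_t^2\geq q^3$ from $\sum_t n_t=q^2$ by Cauchy--Schwarz, which is both shorter and independent of the Jacobi-sum machinery.
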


We first show that the condition is necessary through the following two lemmas which treat separately the cases $q\equiv 1 \pmod 4$ and $q\equiv 3\pmod 4$. 
The proof of sufficiency is posponed to \cref{sec:main}.


\begin{lemma}\label{1144:1mod4}
Let $\form= a(c_1x_{\si(1)})^4+b(c_2x_{\si(2)})^4+4a(c_3x_{\si(3)})^4+4b(c_4x_{\si(4)})^4$ be an exceptional form and let $q$ be a prime number with $q\equiv 1 \pmod 4$ and $q\ndivides ab c_1c_2c_3c_4$. 
Then $\sol_\sform(0,q) \geq q^3$.
\end{lemma}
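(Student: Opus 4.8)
The plan is to derive this directly from \cref{erre:modp:4}. Since $q\equiv 1\pmod 4$ and $q\nmid abc_1c_2c_3c_4$, the prime $q$ does not divide $a_1a_2a_3a_4=16\,a^2b^2(c_1c_2c_3c_4)^4$, so $q\notin\Sigma_\sform$ and \cref{erre:modp:4} applies, giving $\sol_\sform(0,q)=q^3+(2\rhopp+\Kpp)\,q(q-1)$, where $\Kpp=b_{\sform,q}+\chipp(-1)c_{\sform,q}$ is determined by the class $[\chipp(\ua)]\in\mu_4^4/\!\sim$. Since $q(q-1)>0$ and $\rhopp\in[-1,1]$ (as recorded after \cref{erre:modp:4}), it suffices to prove $\Kpp\geq 2$: then $2\rhopp+\Kpp\geq 0$ and the asserted inequality $\sol_\sform(0,q)\geq q^3$ follows.

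First I would note that permuting the variables changes neither $\sol_\sform(0,q)$ nor the class $[\chipp(\ua)]$, so one may assume $\sigma=\mathrm{id}$ and $\ua=(ac_1^4,\,bc_2^4,\,4ac_3^4,\,4bc_4^4)$. The crucial local fact is the identity $\chipp(4)=\chipp(2)^2=\chipp^2(2)=\bigl(\tfrac{2}{q}\bigr)=\chipp(-1)$, valid for every prime $q\equiv 1\pmod 4$ (both sides equal $1$ when $q\equiv 1\pmod 8$ and $-1$ when $q\equiv 5\pmod 8$). Together with $\chipp(c_j)^4=1$ this gives $\chipp(\ua)=(\lambda,\mu,\epsilon\lambda,\epsilon\mu)$, where $\lambda:=\chipp(a)$, $\mu:=\chipp(b)$ and $\epsilon:=\chipp(-1)\in\{1,-1\}$; multiplying componentwise by $\overline{\lambda}$ then shows $[\chipp(\ua)]=[(1,\nu,\epsilon,\epsilon\nu)]$ with $\nu:=\mu\overline{\lambda}\in\mu_4$.

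It then remains to run through the few possibilities for $\nu$ (up to complex conjugation one may take $\nu\in\{1,-1,i\}$, since conjugating $(1,\nu,\epsilon,\epsilon\nu)$ gives $(1,\overline{\nu},\epsilon,\epsilon\overline{\nu})$), identify the resulting class in \cref{table}, and read off $\Kpp$. When $\epsilon=1$, so $\chipp(-1)=1$ and $\Kpp=b_{\sform,q}+c_{\sform,q}$, one obtains $[(1,1,1,1)]=U_1$ ($\Kpp=19$), $[(1,1,-1,-1)]=U_4$ ($\Kpp=3$), and $[(1,1,i,i)]=U_6$ ($\Kpp=7$). When $\epsilon=-1$, so $\chipp(-1)=-1$ and $\Kpp=b_{\sform,q}-c_{\sform,q}$, one obtains $[(1,1,-1,-1)]=U_4$ for $\nu=\pm1$ ($\Kpp=11$) and $[(1,i,-1,-i)]=[(1,-1,i,-i)]=U_8$ for $\nu=i$ ($\Kpp=7$). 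In every case $\Kpp\geq 2$, which completes the argument. There is no genuine obstacle here: once one spots that the factor $4$ multiplying two of the coefficients contributes exactly the sign $\chipp(-1)$, the rest is a finite bookkeeping check, the only slightly delicate point being to recognize the quadruple $(1,i,-1,-i)$, consisting of all four fourth roots of unity, as the class $U_8$ of \cref{table}.
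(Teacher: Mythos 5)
Your proof is correct, but it follows a genuinely different route from the paper. You deduce the inequality from the exact Jacobi-sum formula of \cref{erre:modp:4}: using $\chipp(c_j)^4=1$ and the identity $\chipp(4)=\chipp^2(2)=\bigl(\tfrac{2}{q}\bigr)=\chipp(-1)$, you reduce $[\chipp(\ua)]$ to $[(1,\nu,\epsilon,\epsilon\nu)]$ and check against \cref{table} that in every case $\Kpp\in\{19,3,7,11,7\}\geq 2$, whence $2\rhopp+\Kpp\geq 0$ and $\sol_\sform(0,q)\geq q^3$; all the individual steps (that $q\notin\Sigma_\sform$, that $\rhopp\in[-1,1]$, the table look-ups for $U_1,U_4,U_6,U_8$) check out. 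The paper instead argues elementarily: since $q\equiv 1\pmod 4$ there is $\lambda=1+\omega$ with $\lambda^4=-4$ in $\FF_q$, so after the substitution $(c_1x_{\si(1)},c_2x_{\si(2)},\lambda c_3x_{\si(3)},\lambda c_4x_{\si(4)})$ one counts solutions of $ax_1^4+bx_2^4=ax_3^4+bx_4^4$, which equals $\sum_t n_t^2\geq q^{-1}\bigl(\sum_t n_t\bigr)^2=q^3$ by Cauchy--Schwarz, with $n_t=\#\{(y,z):ay^4+bz^4=t\}$. Note that your key observation $\chipp(4)=\chipp(-1)$ is exactly the statement that $-4$ is a fourth power mod $q$, i.e.\ the same local fact the paper exploits through $\lambda^4=-4$; the difference is in how the inequality is then extracted. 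The paper's argument is more elementary and self-contained (no dependence on \cref{erre:modp:4} or on the correctness of \cref{table}), while your computation is more informative: it identifies precisely which classes $U_1,U_4,U_6,U_8$ (with the relevant sign of $\chipp(-1)$) occur for exceptional forms, which dovetails with the complementary analysis in \cref{kummer:exceptional} and explains directly why exceptional forms never satisfy \eqref{eq:c:e:0}.
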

\begin{proof}
Since $\#\FF_q^\times$ is divisible by four, $\FF_q$ contains a fourth root of unity $\omega\in\FF_q$ with $\omega^2=-1$. Let $\lambda:=1+\omega$, and notice that $\lambda^4=-4$.
Consider now $\formm= ax_1^4+bx_2^4-ax_3^4-bx_4^4$.
Then the map $(x_1,\ld,x_4)\mapsto (c_1x_{\si(1)},c_2x_{\si(2)},\lambda c_3x_{\si(3)}, \lambda c_4x_{\si(4)})$ gives a bijection between $\Sol_\sform(0,q)$ and $\Sol_{\sformm}(0,q)$.

For every $t\in\FF_q$ define $n_t:=\#\{(y,z)\in\FF_q^2: ay^4+bz^4 = t\}$. 
Then we deduce that 
\[
\sol_\sform(0,q) = \sol_{\sformm}(0,q) = \sum_{t\in\FF_q} n_t^2.
\]
However, it is clear that  $\sum_{t\in\FF_q} n_t = q^2$. 
Hence, from the quadratic-arithmetic mean inequality (or Cauchy-Schwartz) we get $\sol_\sform(0,q)\geq q^3$.
\end{proof}

\begin{lemma}\label{1144:3mod4}
Let $\form= a_1 x_1^4+\ld + a_4 x_4^4$ with $a_1,\ld,a_4\in\Z\setminus \{0\}$ and let $q$ be a prime number with $q\equiv 3 \pmod 4$ and $q\ndivides a_1a_2a_3a_4$. 
Then $\sol_\sform(0,q) = q^3 + \left(\frac{a_1a_2a_3a_4}{q}\right) q(q-1)$, where $\left(\frac{\cdot}{q}\right)$ denotes the Legendre symbol.
\end{lemma}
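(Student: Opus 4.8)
The plan is to exploit the congruence $q\equiv 3\pmod 4$, which forces $\gcd(4,q-1)=2$ and thereby collapses the biquadratic characters of $\FF_q$ onto the single quadratic character; this is the complementary situation to \cref{erre:modp:4}. Concretely, the first step is to observe that the multiplicative characters of $\FF_q^\times$ of order dividing $4$ form a cyclic group of order $\gcd(4,q-1)=2$, so that $\X_q^{(4)}$ reduces to the singleton $\{\chippl\}$, where $\chippl=\left(\frac{\cdot}{q}\right)$ is the Legendre symbol regarded as a multiplicative character of $\FF_q$. Since $\chippl$ has order $2$, the only quadruple $(\chi_1,\ldots,\chi_4)$ of characters in $\X_q^{(4)}$ with $\chi_1\chi_2\chi_3\chi_4=\uno$ is $(\chippl,\chippl,\chippl,\chippl)$.

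Next I would feed this into \cref{erre:Jacobi} with $k=\s=4$. Since $\chippl$ is real-valued it coincides with its own complex conjugate, and it is multiplicative, so the sum over characters there collapses to a single term, giving $\sol_\sform(0,q)=q^3+\chippl(a_1a_2a_3a_4)\,J_0(\chippl,\chippl,\chippl,\chippl)$ with $\chippl(a_1a_2a_3a_4)=\left(\frac{a_1a_2a_3a_4}{q}\right)$. Thus everything comes down to evaluating the modified Jacobi sum $J_0(\chippl,\chippl,\chippl,\chippl)$.

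For that I would use \cref{Jacobi}: equation \eqref{3.2} gives $J_0(\chippl,\chippl,\chippl,\chippl)=\frac{q-1}{q}G(\chippl)^4$, reducing the problem to the classical quadratic Gauss sum. Its square is evaluated either by citing the standard fact $G(\chippl)^2=\chippl(-1)\,q$ or, to stay within the paper's toolkit, by applying \eqref{3.1} with $\ell=2$ to the pair $(\chippl,\chippl)$ --- noting that $J(\chippl)=1$ directly from the definition \eqref{def:Jacobi} of the generalized Jacobi sum --- which yields $G(\chippl)^2=\chippl(-1)\,q$. Since $q\equiv 3\pmod 4$ we have $\chippl(-1)=\left(\frac{-1}{q}\right)=-1$, hence $G(\chippl)^4=q^2$ and $J_0(\chippl,\chippl,\chippl,\chippl)=q(q-1)$. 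Substituting back gives the claimed identity $\sol_\sform(0,q)=q^3+\left(\frac{a_1a_2a_3a_4}{q}\right)q(q-1)$.

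There is no serious obstacle here: once one notices the degeneration of $\X_q^{(4)}$ to the quadratic character, the lemma is essentially a bookkeeping exercise, and the only point requiring a moment's care is tracking the sign $\chippl(-1)=-1$ produced by $q\equiv 3\pmod 4$, which is precisely what turns the main term correction into the Legendre symbol $\left(\frac{a_1a_2a_3a_4}{q}\right)$ rather than its absolute value. As an alternative route, one could substitute $y_i=x_i^2$ after noting that $x\mapsto x^4$ and $x\mapsto x^2$ have identical fibre sizes over $\FF_q$ when $q\equiv 3\pmod 4$, thereby reducing $\sol_\sform(0,q)$ to the number of $\FF_q$-points on the diagonal quadric $a_1y_1^2+\cdots+a_4y_4^2=0$ and invoking the classical count for that.
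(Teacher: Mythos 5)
Your proof is correct and essentially the paper's: both arguments rest on the collapse of the quartic characters to the single Legendre character $\chippl$ when $q\equiv 3\pmod 4$, followed by the identical evaluation $J_0(\chippl,\chippl,\chippl,\chippl)=\tfrac{q-1}{q}G(\chippl)^4=q(q-1)$ via $J(\chippl)=1$, \eqref{3.1}, \eqref{3.2} and $\chippl(-1)=-1$. The only cosmetic difference is that the paper first replaces $\form$ by the quadratic form $a_1x_1^2+\cdots+a_4x_4^2$ (using the equal fibre sizes of $x\mapsto x^4$ and $x\mapsto x^2$) and applies \cref{erre:Jacobi} with $k=2$, whereas you apply it directly with $k=4$ — which is exactly the alternative reduction you note at the end.
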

\begin{proof}
Recall that the Legendre symbol $\chippl(\cdot\bmod q):=\left(\frac{\cdot}{q}\right)$ is the only nontrivial quadratic character of $\FF_q$, so: $\X_q^{(2)} = \{\chippl\}$.
For $a\in\FF_q^\times$ we have $\chippl(a)=1$ if and only if $a$ is a quadratic residue modulo $q$, and we have $\chippl(a)=-1$ otherwise.

Let $\formm: a_1 x_1^2+\ld + a_4 x_4^2$ be a quadratic form with the same coefficients as $\form$. 
Since $J(\chippl)=1$, we get $G(\chippl)^2 = \chippl(-1) q$ by (3.1).
Then by (3.2) we get $J_0(\chippl,\chippl,\chippl,\chippl) = (q-1)q$, and since $\X_q^{(2)} = \{\chippl\}$ we see that
\[
\sol_{\sformm}(0,q) = q^3 + \chippl(a_1a_2a_3a_4) (q-1) q,
\]
by \cref{erre:Jacobi} and multiplicativity of $\chippl$.
Finally, we notice that $\sol_{\sform}(0,q) = \sol_{\sformm}(0,q)$, because, since $q\equiv 3\pmod 4$, we have $\#\{x\in\FF_q:\ x^4=y\} = \#\{x\in\FF_q:\ x^2=y\}$ for all $y\in\FF_q$. 
\end{proof}

If $\form= a(c_1x_{\si(1)})^4+b(c_2x_{\si(2)})^4+4a(c_3x_{\si(3)})^4+4b(c_4x_{\si(4)})^4$  is an exceptional form, the product of its coefficients is a perfect square. 
Then \cref{1144:3mod4} implies that $\sol_\sform(0,q) \geq q^3$ if $q\equiv 3\pmod 4$ and $q\not \in\Sigma_\sform$. 
Together with \cref{1144:1mod4} we conclude that $\sol_\sform(0,q) \geq q^3$ for every $q\not\in\Sigma_\sform$, as claimed in \cref{thm:exceptional:iff}.

%
%

\subsection{Computing $\Kpp$ via Kummer's theory}\label{sec:kummer}

In order to prove that the condition in \cref{thm:exceptional:iff} is sufficient, we need to analyze in more detail the formula given in \cref{erre:modp:4}.
Fix $\form= a_1x_1^4 + a_2x_2^4  + a_3x_3^4 + a_4x_4^4$ with $a_1,a_2,a_3,a_4\in\Z\setminus\{0\}$ and recall that $\mu_4=\{1,-1,i,-i\}$. 
We notice that the term $\Kpp$ in \eqref{eq:modp:4} depends only on $\chipp(-1)$ and $\chipp(a_1),\ld,\chipp(a_4)$, 
and that the character $\chipp$ depends on the choice of a prime ideal $\pp$ of $\Z[i]$ above $q$. 
A prime $q\equiv 1 \bmod 4$ splits in $\Z[i]$ as $q=\pp \wb\pp$ and we have $\chi_{4,\wb \pp} = \wb{\chi_{4,\pp}}$. 

Let $\Pquattro$ denote the set of prime numbers $q$ that satisfy $q\equiv 1\bmod 4$ and $q\not\in\Sigma_\sform$. 
If $q\in\Pquattro$, let $\chipp(\ua,-1)\in\mu_4^4\times\{\pm 1\}$ be a shorthand for $((\chipp(a_1),\ld,\chipp(a_4)),\chipp(-1))$. 
For all $\u\in\mu_4^4\times\{\pm 1\}$ let  $\wb\u\in\mu_4^4\times\{\pm 1\}$ be obtained from $\u$ by componentwise complex conjugation and let 
\[
\P_{\sform,\u} := \{q\in\P_{\sform,1}:\ \chipp(\ua,-1) \in\{\u, \wb\u\}\}.
\]
The natural setting to study these sets is over the Gaussian quadratic field, via Kummer's theory. 
Let $K=\Q(i)$ and let $\Delta_\sform\subseteq K^\times/(K^\times)^4$ be the (finite abelian) subgroup multiplicatively generated by $a_1, a_2,a_3,a_4,-1$. 
Notice that $-1 \bmod (K^\times)^4 = 4 \bmod (K^\times)^4$, because $(1+i)^4=-4$.
Moreover, observe that $(K^\times)^4 \cap \Q_+ = (\Q^\times)^4$, where $\Q_+$ denotes the multiplictive group of strictly positive rational numbers.
Therefore we can view $\Delta_\sform$ as the subgroup of $\Q_+/(\Q^\times)^4\subseteq K^\times/(K^\times)^4$  multiplicatively generated by $a_1, a_2,a_3,a_4,4$. 
Notice that $\Q_+/(\Q^\times)^4\cong \bigoplus_{\ell \text{ prime}} \Z/4\Z$ as an abelian group.

Let $L=K(\sqrt[4]{\Delta_\sform})$. 
By Kummer's theory \cite[Ch. I.\S~5]{NeukirchCFT} 
 we have that $L/K$ is a finite abelian extension of exponent 4 with Galois group $G:=\op{Gal}(L/K)\cong \op{Hom}(\Delta_\sform,\mu_4)$. 
The isomorphism $\psi: G\to \op{Hom}(\Delta_\sform,\mu_4)$ and the dual $\hat\psi: \Delta_\sform\to \op{Hom}(G,\mu_4)$ are induced by the perfect pairing $G\times\Delta_\sform\to\mu_4$ given by $(\sigma,a)\mapsto \frac {\sigma(\sqrt[4]{a})}{\sqrt[4]{a}})$. 
The link with the power residue characters is given by the fact that 
\[
\left(\frac a \p\right)_4 = \frac{(\p,L/K)(\sqrt[4]{a})}{\sqrt[4]{a}}
\]
for all $a\in \O_K \cap (L^\times)^4$ and all prime ideal $\p\subseteq \O_K$ coprime with $ma$ where  $m=2a_1\dots a_4$.
Here $(\p,L/K)\in \op{Gal}(L/K)$ denotes the Frobenius element of $\p$, which is well-defined because $L/K$ is abelian.
In other words, the values of $\chi_{4,\p}$ on $a_1,a_2,a_3,a_4,-1$ are obtained by applying $\hat\psi(a_1),\ld,\hat\psi(-1) \in  \op{Hom}(G,\mu_4)$ to the Frobenius element $(\p,L/K)\in G$. 
Or dually, by applying $\psi((\p,L/K))\in \op{Hom}(\Delta_\sform,\mu_4)$ to $a_1,a_2,a_3,a_4,-1\in \Delta_\sform$.


\subsection{The sets $\P_{\sform,\u}$ and Chebotarev's theorem}\label{sec:chebotarev}

Following the discussion in \cref{sec:kummer}, we consider the map 
\begin{center}
\begin{tabular}{>{$}r<{$}>{$}c<{$}>{$}r<{$}>{$}c<{$}}
\aphi_\sform: \ 	&  \op{Hom}(\Delta_\sform,\mu_4) 	& \too 		& \mu_4^4\times\{\pm 1\}\\
			& 				\chi		& \longmapsto &((\chi(a_1),\ld,\chi(a_4)),\chi(-1))
\end{tabular}
\end{center}

\begin{proposition}\label{chebotarev}
Let $\form=a_1 x_1^4 + a_2x_2^4+a_3 x_3^4 + a_4 x_4^4$ with $a_1,\ld,a_4\in\Z\setminus\{0\}$ and let $\u\in\mu_4^4\times\{\pm 1\}$ be in the image of $\aphi_\sform$. 
Then $\P_{\sform,\u}\neq \emptyset$ and moreover for $T\to \infty$ we have
\begin{equation}\label{eq:chebotarev}
\#\P_{\sform,\u}\cap [1,T] = \d \Li(T)+ O(Te^{-\a\sqrt{\log T}}) 
\end{equation}
for some $\d\geq \frac 1 {1024}$ and some effectively computable absolute constant $\a\great 0$.
\end{proposition}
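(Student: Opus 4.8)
The plan is to deduce Proposition~\ref{chebotarev} from the Chebotarev density theorem applied to the abelian extension $L/K$ with $K=\Q(i)$, combined with the identification of $\chi_{4,\p}$ on the generators of $\Delta_\sform$ with the image of the Frobenius under $\psi$, as set up in \cref{sec:kummer}. First I would observe that, by the dictionary established in \cref{sec:kummer}, for a prime number $q\in\Pquattro$ and a prime ideal $\p\subseteq\O_K$ above $q$, the tuple $\chipp(\ua,-1)$ equals $\aphi_\sform\bigl(\psi((\p,L/K))\bigr)$; since the two primes $\p,\wb\p$ above $q$ have conjugate power residue characters (as recalled just before the statement, $\chi_{4,\wb\p}=\wb{\chi_{4,\p}}$), the unordered pair $\{\chipp(\ua,-1),\overline{\chipp(\ua,-1)}\}$ depends only on $q$, and it equals $\{\u,\wb\u\}$ precisely when the Frobenius conjugacy class (here a single element, as $G$ is abelian) of $\p$ lies in $\psi^{-1}(\aphi_\sform^{-1}(\{\u,\wb\u\}))$. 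Thus $\P_{\sform,\u}$ is, up to the finitely many ramified primes and those in $\Sigma_\sform$, the set of rational primes $q\equiv 1\bmod 4$ that split in $\O_K$ into primes with Frobenius in a fixed nonempty union $\mcl C$ of at most two elements of $G$.

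Next I would apply Chebotarev to $L/K$ in the effective form that matches \cref{pnt}: the count of prime ideals $\p$ of $\O_K$ with $N\p\leq T$ and $(\p,L/K)\in\mcl C$ is $\frac{\#\mcl C}{\#G}\Li(T)+O(Te^{-\a\sqrt{\log T}})$, with $\a$ effective and absolute (it depends only on $L$, whose degree is bounded by $[L:K]\leq 2^{?}$; one may absorb this into an absolute constant after noting the extension is controlled). Since each such $\p$ has $N\p=q$ a rational prime $\equiv 1\bmod 4$, and the two primes above such a $q$ either both lie over $\mcl C$ or contribute conjugately, one converts the ideal count into a count of rational primes $q$ with a harmless factor of $2$ and an error of the same shape; this yields \eqref{eq:chebotarev} with $\d=\frac{\#\mcl C}{\#G}$ (or half of it, after bookkeeping), and $\#\mcl C\geq 1$, $\#G=\#\Delta_\sform\leq 4^{\,5}=1024$ by Kummer theory since $\Delta_\sform$ is generated by $5$ elements of $K^\times/(K^\times)^4\cong\bigoplus\Z/4\Z$. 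Hence $\d\geq \frac{1}{1024}$. The nonemptiness $\P_{\sform,\u}\neq\emptyset$ is then immediate from positivity of the main term (or directly from Chebotarev, every element of $G$ being a Frobenius), once $\u$ is assumed to be in the image of $\aphi_\sform$ so that $\psi^{-1}(\aphi_\sform^{-1}(\{\u,\wb\u\}))$ is nonempty.

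The main obstacle is purely bookkeeping rather than conceptual: one must be careful that the exceptional primes (the ramified ones, those dividing $m=2a_1\cdots a_4$, and those in $\Sigma_\sform$) are finite in number and can be discarded without affecting the asymptotic, and that the passage from counting prime ideals of $\O_K$ to counting rational primes is done consistently — in particular, whether a given $q$ contributes one or two ideals to the relevant class, and how the pair $\{\u,\wb\u\}$ (possibly with $\u=\wb\u$) interacts with this. One also needs the effective, uniform-enough version of Chebotarev for $L/K$; this is exactly the content of \cref{pnt} (or its abelian specialization), applied to the characters of $G=\op{Gal}(L/K)$, so no new analytic input is required. The bound $\#G\leq 1024$, giving $\d\geq\frac{1}{1024}$, comes for free from the Kummer-theoretic description $G\cong\op{Hom}(\Delta_\sform,\mu_4)$ and the fact that $\Delta_\sform$ has at most $5$ generators.
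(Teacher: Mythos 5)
Your route is the same as the paper's: Kummer theory identifies $\chipp(\ua,-1)$ with $\aphi_\sform(\psi((\pp,L/K)))$, effective Chebotarev for the abelian extension $L/K$ counts the prime ideals whose Frobenius lies in $\psi^{-1}(\aphi_\sform^{-1}(\{\u,\wb\u\}))$, the degree-two primes (those above $q\equiv 3\bmod 4$) are at most $\sqrt T$ and get absorbed into the error term, and the remaining degree-one primes are converted into rational primes $q\in\Pquattro$. The qualitative conclusions — nonemptiness of $\P_{\sform,\u}$, the asymptotic \eqref{eq:chebotarev} with an absolute effective $\a$ — all come out correctly this way.

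The one genuine gap is in the constant $\d\geq\frac 1{1024}$. The factor of two that you leave undecided (``or half of it, after bookkeeping'') is genuinely there: for $q\in\P_{\sform,\u}$ \emph{both} conjugate ideals $\pi_{4,q}$ and $\wb\pi_{4,q}$ have Frobenius in the chosen class set, because $\{\u,\wb\u\}$ is conjugation-stable and $\chi_{4,\wb\pp}=\wb{\chi_{4,\pp}}$; hence the ideal count is twice the rational-prime count and $\d=\d'/2$, where $\d'=\#\aphi_\sform^{-1}(\{\u,\wb\u\})/\#\Delta_\sform$. In the worst case $\u=\wb\u$ the numerator can be $1$, so with your bound $\#\Delta_\sform\leq 4^5=1024$ you only reach $\d\geq\frac 1{2048}$, short of the claim. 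The paper avoids this by the sharper bound $\#\Delta_\sform\leq 512$: the generator $-1$ has order at most $2$ in $K^\times/(K^\times)^4$ (since $(-1)^2=1$ is a fourth power; equivalently $-1\equiv 4$ modulo fourth powers and $4^2=16$ is a fourth power in $\Q$), so $\Delta_\sform$ is generated by four elements of order at most $4$ and one element of order at most $2$. This gives $\d\geq\frac 1{2\cdot 512}=\frac 1{1024}$, and with that single sharpening your argument yields exactly the stated proposition.
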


\begin{proof}
Denote for brevity $\aphi=\aphi_\sform$ and recall that we described an isomorphism $\psi:\ \op{Gal}(L/K) \to \op{Hom}(\Delta_\sform,\mu_4)$ in \cref{sec:kummer}. 
Then by Chebotarev's theorem \cite[Thm. 3.4]{Serre} the set
\[
\P:=\{\p\in\op{Spec}{\O_K}:\ \psi((\p,L/K)) \in \aphi^{-1}(\{\u,\wb\u\})\}
\]
satisfies 
\begin{equation}\label{eq:c:e:1}
\#\{\p\in \P:\ N\p\leq T\} = \d' \Li(T)+ O(Te^{-\a\sqrt {\log T}}),
\end{equation}
for some $\a\great 0$ and $\d'=\frac {\# \aphi^{-1}(\{\u,\wb\u\})}{\#\Delta_\sform}$. 
Since $\Delta_\sform$ is an abelian group generated by 4 elements of order at most 4, and an element of order 2, we have $\#\Delta_\sform\leq 512$. 
In particular the degree of $L=\Q(i,\sqrt[4]{\Delta_\sform})$ over $\Q$ is at most 1024, and so we can take $\a$ to be an effectively computable absolute constant by \cite{Chebotareveffective}. 
For the sake of completeness, we remark that also the constant implied in the $O$-notation can be computed effectively, and it is an absolute constant if the Dirichlet zeta function of $L$ has no real zero, while it may depend on the discriminant of $L$ otherwise. 
Since $\u$ is in the image of $\aphi$, we have $\# \aphi^{-1}(\{\u,\wb\u\})\geq 1$ and so $\d'\geq \frac 1 {512}$.  
Notice that for every $T$ there are at most $\sqrt T$ primes $\p\in\O_K$ of degree two with $N\p\leq T$. 
Indeed, these are the primes of the form $\p=p\O_K$ where $p$ is a (rational) prime number with $p\equiv 3\pmod 4$, and $N\p=p^2$. 
Therefore the estimate in \eqref{eq:c:e:1} is also valid when we restrict to the primes of degree 1 which are coprime with $2a_1a_2a_3a_4$. These come in conjugate pairs, which correspond bijectively to rational primes $q\in\Pquattro$ via $q=\pi_{4,q}\wb{\pi}_{4,q}$. For such $q$ we have
\[
\pi_{4,q}\in\P\iff \wb\pi_{4,q}\in\P\iff q\in\P_{\sform,\u},
\]
therefore we get \eqref{eq:chebotarev} with $\d=\d'/2\geq \frac 1 {1024}$. 
\end{proof}

\subsection{Characters of $\Delta_\sform$, exceptional forms and the inequality $\Kpp\leq 1$}\label{sec:galois}

In this paragraph we finally compute the term $\Kpp$ of \cref{erre:modp:4} when $\form$ is not exceptional and we deduce, together with \cref{chebotarev}, that $\Kpp\leq 1$  for a positive proportion of the primes $q\ndivides \Sigma_\sform$. 
Let $\bphi_\sform: \ \op{Hom}(\Delta_\sform,\mu_4) \longrightarrow (\mu_4^4/\!\sim)\times\{\pm 1\}$ be the composition of $\aphi_\sform$ with the natural projection $\proj: \mu_4^4\times\{\pm 1\} \to (\mu_4^4/\!\sim)\times\{\pm 1\}$. See \cref{sec:diagonal:sol} for the definition of $\mu_4^4/\!\sim$.
For brevity, we denote the elements of $\mu_4^4/\!\sim$ by $U_1,\ld,U_8$ as shown in \cref{table}.

\begin{lemma}\label{kummer:exceptional}
Let $\form=a_1 x_1^4 + a_2x_2^4+a_3 x_3^4 + a_4 x_4^4$ with $a_1,\ld,a_4\in\Z\setminus\{0\}$. 
Assume that, in the image of $\bphi_\sform$, there is no element $(U,u_5)$ with
\begin{equation}\label{eq:c:e:0}
(U,u_5) \in \{(U_i,1):\ i\in\{2,3,5,7,8\}\} \cup \{(U_i,-1):\ i\in\{1,2,5,6,7\}\}.
\end{equation}
Then $\form$ is exceptional.
\end{lemma}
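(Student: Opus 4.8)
The statement is the contrapositive of a necessary condition for exceptionality: assuming that the image of $\bphi_\sform$ avoids a certain explicit list of ten pairs $(U_i,u_5)$, I must produce a factorization of $\form$ into the exceptional shape. The plan is to reason purely at the level of the finite abelian group $\Delta_\sform \leq \Q_+/(\Q^\times)^4$ and its character group $\op{Hom}(\Delta_\sform,\mu_4)$, exploiting the perfect Kummer pairing from \cref{sec:kummer}. First I would observe that $\bphi_\sform$ records, for each character $\chi$, the unordered-up-to-$\mu_4$-and-conjugation class of the quadruple $(\chi(a_1),\ld,\chi(a_4))$ together with $\chi(-1)=\chi(4)$. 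The hypothesis says that, for \emph{no} $\chi\in\op{Hom}(\Delta_\sform,\mu_4)$, does $(\bphi_\sform(\chi))$ land among the forbidden pairs in \eqref{eq:c:e:0}. I would then translate ``$\form$ is exceptional'' into the group-theoretic assertion that, after a permutation of the indices, $a_1 a_3^{-1}$ and $a_2 a_4^{-1}$ both lie in the subgroup of $\Q_+/(\Q^\times)^4$ generated by $4$ (equivalently $a_1\equiv 4^{\epsilon_1}a_3$, $a_2\equiv 4^{\epsilon_2}a_4$ mod fourth powers, which is exactly the condition $\form=a(c_1x)^4+b(c_2x)^4+4a(c_3x)^4+4b(c_4x)^4$ up to scaling and permutation, once one absorbs the $c_i^4$'s and common factors).

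**Key steps.** (1) Reduce to the images in $\mu_4^4/\!\sim$: since $\Delta_\sform$ is generated by the classes of $a_1,\ld,a_4$ and of $4$, a character $\chi$ is determined by the five values $\chi(a_i)$ and $\chi(4)$; as $i\to $ the corresponding class $\bar a_i\in\Q_+/(\Q^\times)^4 \cong \bigoplus_\ell \Z/4\Z$, the relations among $\bar a_1,\ld,\bar a_4,\bar 4$ are exactly the linear relations over $\Z/4\Z$ among these vectors. (2) Spell out the eight classes $U_1,\ld,U_8$: $U_1$ is the class of a constant quadruple, $U_4$ is two pairs of ``$\pm$''-related entries (i.e.\ $(1,1,-1,-1)$, which modulo $\mu_4$ and conjugation is ``$a_1\equiv a_2$, $a_3\equiv a_4$, $a_1/a_3\equiv 4$''), $U_2$ is one entry differing from the other three by a sign, etc. The forbidden list \eqref{eq:c:e:0} is precisely the complement, within $(\mu_4^4/\!\sim)\times\{\pm1\}$, of the pairs that are \emph{consistent} with the exceptional pattern ``$a_1\equiv a_2$, $a_3\equiv a_4$, $a_1\equiv 4 a_3$'' after reindexing — this is the bookkeeping heart of the argument and should be done by inspecting \cref{table} together with the value of $\chipp(-1)$. (3) Argue that if the exceptional pattern fails, then one can choose $\chi\in\op{Hom}(\Delta_\sform,\mu_4)$ — using surjectivity of evaluation against a suitable primitive vector orthogonal to the relations that \emph{would} force exceptionality — whose $\bphi_\sform$-image lies in the forbidden set, contradicting the hypothesis. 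Concretely: if no two of $\bar a_1,\ld,\bar a_4$ are congruent modulo $\langle \bar 4\rangle$, pick $\chi$ separating them; if exactly two coincide mod $\langle\bar 4\rangle$ but the paired differences are not both in $\langle\bar 4\rangle$, pick $\chi$ detecting that; in each residual configuration exhibit an offending character. (4) Conclude $\form$ is exceptional.

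**Main obstacle.** The hard part is the finite but intricate case analysis in step (3): one must show that \emph{every} non-exceptional configuration of $(\bar a_1,\ld,\bar a_4,\bar 4)$ admits a character whose image is forbidden. This amounts to a careful traversal of the ways four elements of $\Q_+/(\Q^\times)^4$ can sit relative to the cyclic subgroup $\langle\bar 4\rangle\cong\Z/2\Z$ (note $\bar 4$ has order $2$ since $16$ is a fourth power), organized by how many coincide modulo $\langle\bar4\rangle$ and what the pairwise differences are; for each such combinatorial type one produces an explicit $\chi$ and reads off from \cref{table} which $U_i$ and which sign $\chipp(-1)$ results, checking membership in the list \eqref{eq:c:e:0}. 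Verifying that the complement of the ``exceptional-compatible'' pairs is \emph{exactly} the ten-element set in \eqref{eq:c:e:0}, and that the character-existence step never fails, is where all the real work lies; the rest is formal manipulation of the Kummer pairing and of fourth powers in $\Q^\times$.
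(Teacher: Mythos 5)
Your overall framework (work in $\Q_+/(\Q^\times)^4$ via the Kummer pairing and, contrapositively, produce a character with forbidden image whenever $\form$ is not exceptional) is the right one and matches the paper's, but two of your structural claims are wrong and they break the argument. First, your group-theoretic translation of ``exceptional'' is incorrect: exceptionality means that, after a permutation, the coefficient classes pair up with ratio \emph{exactly} $\bar 4$ (i.e.\ $\bar a_3=\bar 4\,\bar a_1$ and $\bar a_4=\bar 4\,\bar a_2$), not merely with ratios in $\langle\bar 4\rangle=\{\bar 1,\bar 4\}$. For instance $x_1^4+3x_2^4+x_3^4+3x_4^4$ satisfies your condition with $\epsilon_1=\epsilon_2=0$ but is not exceptional (the coefficients of an exceptional form can never all have $2$-adic valuation $\equiv 0 \bmod 4$), and indeed the character with $\chi(3)=1$, $\chi(4)=-1$ sends it to the forbidden pair $(U_1,-1)$. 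Since your case analysis in step (3) is organized around the weaker condition, it would at best prove ``hypothesis $\Rightarrow$ ratios lie in $\langle\bar4\rangle$'', which does not imply exceptionality; detecting such forms requires characters that are trivial on the $a_i$ but nontrivial on $-1$ (equivalently, the $2$-adic valuation character), which your scheme never produces.

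Second, your bookkeeping claim in step (2) is false: exceptional forms realize only $(U_1,1),(U_4,1),(U_6,1),(U_4,-1),(U_8,-1)$, so the complement of the exceptional-compatible pairs has eleven elements, while the list \eqref{eq:c:e:0} has ten --- the pair $(U_3,-1)$ is incompatible with exceptionality yet not forbidden. The lemma survives only because a character with image $(U_3,-1)$ has square with image $(U_2,1)$, which \emph{is} forbidden; your plan contains no such mechanism, whereas the paper uses exactly this squaring observation to pin down $\bphi_\sform(\chi_2)\in\{(U_4,-1),(U_8,-1)\}$ for the character $\chi_2=i^{v_2(\cdot)}$. Finally, the real content --- exhibiting, for \emph{every} non-exceptional configuration, a character with forbidden image --- is only sketched (``pick $\chi$ separating them''), and you acknowledge this. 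In the paper this is carried out by normalizing the coefficients, using $\chi_2$ to force the shape $(d_1,d_2,4d_3,4d_4)$ or $(d_1,2d_2,4d_3,8d_4)$ with odd $d_i$, using the odd-prime valuation characters to force the $d_i$ to pair up prime by prime, and then, in each non-exceptional case, constructing an explicit auxiliary character at one or two odd primes so that $\chi_2\achi$ has image $(U_1,-1)$ or $(U_6,-1)$. As it stands, the proposal is a plan with an incorrect target condition and a missing key step, not a proof.
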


\begin{proof}
We notice that the image of $\bphi_\sform$ doesn't change, if we multiply one coefficient of $\form$ by the fourth power of an integer, or if we multiply all its coefficients by the same nonzero integer, or if we permute its cofficients. 
Therefore we may assume without loss of generality that $\op{gcd}(a_1,a_2,a_3,a_4)=1$ and that none of $a_1,\ld,a_4$ is divisible by nontrivial fourth powers. 

For every prime $\ell$ we consider the group homomorphism $\chi'_\ell:\ \Q_+/(\Q^\times)^4\to \mu_4$ given by $ r \mapsto i^{v_\ell(r)}$, where $v_\ell(\cdot)$ is the $\ell$-adic valuation. 
Let $\chi_\ell=(\chi'_\ell)|_{\Delta_\sform}\in\op{Hom}(\Delta_\sform,\mu_4)$ be the restriction of $\chi'_\ell$ with respect to the inclusion $\Delta_\sform\hookrightarrow \Q_+/(\Q^\times)^4$.

We cannot have $\bphi_\sform(\chi_2) = (U_3,-1)$, otherwise $\bphi_\sform(\chi_2^2) = (U_2,1)$ is in the image of $\bphi_\sform$. Since $\chi_2(-1)=\chi'_2(4)=-1$, we must have $\bphi_\sform(\chi_2)\in\{(U_4,-1), (U_8,-1)\}$. 
By the remarks made at the beginning of the proof, we may therefore assume that either
\begin{enumerate}[(a)]
\item $\form= d_1 x_1^4 + d_2 x_2^4 + 4 d_3 x_3^4 + 4 d_4 x_4^4$, or
\item $\form= d_1 x_1^4 + 2d_2 x_2^4 + 4 d_3 x_3^4 + 8 d_4 x_4^4$,
\end{enumerate}
for some odd integers $d_1,d_2,d_3,d_4$ with $\op{gcd}(d_1,d_2,d_3,d_4)=1$, none of which is divisible by nontrivial fourth powers. 

Notice that for a prime number $\ell\neq 2$ we must have $\bphi_\sform(\chi_\ell)\in\{(U_1,1),(U_4,1), (U_6,1)\}$. 
This means that $\ell$ doesn't divide $d_1d_2d_3d_4$ or else there are exactly two indices $i,j\in\{1,\ld,4\}$ such that $\ell\divides d_i$ and $\ell\divides d_j$, and moreover $v_\ell(d_i)=v_\ell(d_j)\in\{1,2,3\}$.

Suppose that $\form$ is not exceptional. 
Then observe that one of the following cases must occur, for some distinct odd prime numbers $p_1,p_2$:
\begin{enumerate}[(i)]
\item $p_1$ divides both $d_1$ and $d_2$;
\item $p_1$ divides both $d_3$ and $d_4$;
\item $p_1$ divides $d_j$ and $d_3$, $p_2$ divides $d_j$ and $d_4$ for some $j\in\{1,2\}$;
\item $p_1$ divides $d_1$ and $d_j$, $p_2$ divides $d_2$ and $d_j$ for some $j\in\{3,4\}$.
\end{enumerate}
We define auxiliary values $\aux(1)=\aux(3)=2$ and $\aux(2)=1$. 
Now for each case (i)-(iv) we consider the following auxiliary character $\achi\in\op{Hom}(\Delta_\sform,\mu_4)$: 
in (i) $\achi=\chi_{p_1}^{\aux(v_{p_1}(d_1))}$; 
in (ii) $\achi=\chi_{p_1}^{\aux(v_{p_1}(d_3))}$; 
in (iii) and (iv) $\achi=\chi_{p_1}^{\aux(v_{p_1}(d_j))}\chi_{p_2}^{\aux(v_{p_2}(d_j))}$. 
In each case we get that $\aphi_\sform(\achi)$ is equal to $((-1,-1,1,1),1)$ or $((1,1,-1,-1),1)$. 
But then we see that $\bphi_\sform(\chi_2\achi)=(U_1,-1)$ in case (a) above, and  $\bphi_\sform(\chi_2\achi)=(U_6,-1)$ in case (b). 
Both are contrary to our assumptions, so $\form$ is exceptional.
\end{proof}

\begin{proposition}\label{K:exceptional}
Let $\form$ be a biquadratic diagonal form that is not exceptional. 
Choose $\u\in\mu_4^4\times\{\pm 1\}$ in the image of $\aphi_\sform$ such that $\proj(\u)\in (\mu_4^4/\!\sim)\times\{\pm 1\}$ satisfies \eqref{eq:c:e:0}.  
Then $\Kpp\leq 1$ for all $q\in \P_{\sform,\u}$.
\end{proposition}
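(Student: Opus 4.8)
The plan is to combine Proposition~\ref{erre:modp:4}, which expresses $\Kpp = b_{\sform,q} + \chipp(-1)c_{\sform,q}$ as a function of the class $[\chipp(\ua)] \in \mu_4^4/\!\sim$ together with the sign $\chipp(-1)$, with the table of values of $b_{\sform,q} \pm c_{\sform,q}$ given in \cref{table}. The key observation is that the pair $([\chipp(\ua)], \chipp(-1))$ is exactly $\bphi_\sform$ applied to the character $\psi((\pp, L/K))$ attached to the Frobenius of $\pp$, as explained in \cref{sec:kummer,sec:chebotarev}. So if $q \in \P_{\sform,\u}$ with $\proj(\u) = (U, u_5)$ satisfying \eqref{eq:c:e:0}, then $(U,u_5) = \bphi_\sform(\chi)$ for a suitable character $\chi$, and $\Kpp$ equals $b_{\sform,q} + u_5\, c_{\sform,q}$, which is precisely one of the entries in the columns $b_{\sform,q}+c_{\sform,q}$ (when $u_5 = 1$) or $b_{\sform,q}-c_{\sform,q}$ (when $u_5 = -1$) of \cref{table}.

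The proof therefore reduces to a finite verification. First I would recall from \cref{erre:modp:4} that $\Kpp$ depends only on $(U, u_5) = \bphi_\sform(\chipp(\ua,-1))$ — more precisely, that $\Kpp = b_{\sform,q} + u_5 c_{\sform,q}$ with $b_{\sform,q}, c_{\sform,q}$ read off from the row $U$ of \cref{table}, and $u_5 = \chipp(-1)$. Next I would note that this quantity is invariant under the conjugation $\u \mapsto \wb\u$ (both $b_{\sform,q}$ and $c_{\sform,q}$ are conjugation-invariant, as observed in the proof of \cref{erre:modp:4}, and $\chipp(-1) = \pm 1$ is real), so it is well-defined on the set $\P_{\sform,\u}$, which depends only on the unordered pair $\{\u, \wb\u\}$. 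Then I would simply inspect \cref{table}: for each $i \in \{2,3,5,7,8\}$ the entry in column $b_{\sform,q}+c_{\sform,q}$ is respectively $-5, -7, 1, -1, -1$, all $\leq 1$; and for each $i \in \{1,2,5,6,7\}$ the entry in column $b_{\sform,q}-c_{\sform,q}$ is respectively $-5, -5, -3, -1, -1$, all $\leq 1$. Hence in every case allowed by \eqref{eq:c:e:0} we get $\Kpp \leq 1$, as claimed.

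The only genuinely substantive point — and the place where one must be careful rather than merely compute — is the identification of $\Kpp$ for $q \in \P_{\sform,\u}$ with the corresponding table entry: one must check that the class $[\chipp(\ua)]$ and the sign $\chipp(-1)$ that enter \cref{erre:modp:4} are the same data as $\bphi_\sform(\psi((\pp,L/K)))$ used to define $\P_{\sform,\u}$, and that passing to $\wb\u$ (i.e.\ choosing $\wb\pp$ over $\pp$) does not change $\Kpp$. Both are already established in the material preceding the statement: the link between $\chipp$ and the Frobenius of $\pp$ is spelled out in \cref{sec:kummer}, and the invariance of $b_{\sform,q}, c_{\sform,q}$ under conjugation is in the proof of \cref{erre:modp:4}. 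Once these are invoked, the rest is the table lookup described above. I expect no real obstacle; the delicate bookkeeping is entirely bundled into the way $\P_{\sform,\u}$ and $\bphi_\sform$ were set up in the preceding subsections.
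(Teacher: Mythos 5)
Your proof is correct and follows essentially the same route as the paper: the paper's one-line argument likewise observes that $\proj(\u)=\proj(\wb\u)$, so for every $q\in\P_{\sform,\u}$ the pair $([\chipp(\ua)],\chipp(-1))$ equals $\proj(\u)$ and hence satisfies \eqref{eq:c:e:0}, after which $\Kpp\leq 1$ is read off from \cref{table} exactly as in your lookup. The only (harmless) difference is that you route the identification through the Frobenius/Kummer description, which is not actually needed here, since $\P_{\sform,\u}$ is defined directly by the condition $\chipp(\ua,-1)\in\{\u,\wb\u\}$.
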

\begin{proof}
Notice that $\proj(\u)=\proj(\wb\u)$, so for all $q\in\P_{\sform,\u}$ we verify from \eqref{eq:c:e:0} and \cref{table} that $\Kpp\leq 1$.
\end{proof}

%
%

\section{Equidistribution and the terms $\Hp$ and $\Hpp$}\label{sec:equidistribution}

In this section we investigate the remaining terms $\rhop,\rhopp$ in \cref{erre:modp:3} and \cref{erre:modp:4}. 
The main fact that we exploit is that $\Hp$ and $\Hpp$ essentially take the values of infinite order unitary Hecke characters. 
This enables us to show that they equidistribute on the unit circle as $p,q\to \infty$, using \cref{ET} and the estimates given by \cref{pnt}.

\subsection{Equidistribution of $\Hp$}\label{sec:equidistribution:3}

The case of cubic forms is almost straightforward.

\begin{proposition}\label{equi:3}
Let $\form= a_1 x_1^3 + a_2 x_2^3 + a_3 x_3^3$ be a cubic diagonal form with $a_1,a_2,a_3\in\Z\setminus \{0\}$. 
For all $\b\in(-1,1]$ let
\[
\P_{\sform,\b} := \{\text{$p$ prime}:\ p \equiv 1 \mod* 3,\  p\not \in \Sigma_\sform,\  \text{and}\ \rhop\leq \beta\}.
\]
Then $\P_{\sform,\b}$ is nonempty, and for $T\to \infty$ we have
\begin{equation}\label{equi:H:3}
\#\P_{\sform,\b}\cap [1,T] = \d \Li(T)+ O(T e^{-\a\sqrt{\log T}})
\end{equation}
for some 
absolute constant $\a\great 0$ and with $\d =\frac 1 {2\pi} \op{arccos}(-\b)$.
\end{proposition}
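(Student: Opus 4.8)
The plan is to realize $\Hp = \wbchip(a_1a_2a_3)\pi_{3,p}/\sqrt p$ as (up to conjugation) the value at a prime element of a single fixed unitary Hecke character of $K:=\Q(\zeta_3)$ of infinite order, and then apply the Erd\H{o}s--Tur\'an equidistribution lemma (\cref{ET}) with the power-sum bounds coming from \cref{pnt}. First I would set up notation over $K=\Q(\zeta_3)$, whose ring of integers is the Eisenstein integers, with class number $1$. For a rational prime $p\equiv 1\pmod 3$ with $p\notin\Sigma_\sform$, choose the prime ideal $\p\mid p$ so that $\p=(\pi_{3,p})$ with $\pi_{3,p}=J(\chip,\chip)$; recall from \cref{def:symbol} and \cref{hecke}(ii) that $\J_3(\cdot)$, defined by $\J_3(\p):=-J(\chi_{3,\p},\chi_{3,\p})(N\p)^{-1/2}$, is a unitary Hecke character of $K$ with defining ideal $(9)$ and infinity type $\chi_\infty(\alpha\otimes 1)=\alpha/|\alpha|$; in particular it has infinite order. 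Likewise, by \cref{hecke}(i) the power residue symbol $\bigl(\tfrac{a_1a_2a_3}{\cdot}\bigr)_3$ is a unitary abelian (finite-order) character of $K$. Hence the product $H:=\overline{\bigl(\tfrac{a_1a_2a_3}{\cdot}\bigr)_3}\cdot(-\J_3(\cdot))$ — with the sign chosen so that $H(\p)=\wbchip(a_1a_2a_3)\pi_{3,p}/\sqrt p=\Hp$ — is a unitary Hecke character of $K$ with some defining ideal $\m$, and it has infinite order because its infinity type is nontrivial. The only subtlety here is bookkeeping the sign/normalization and noting that $\Hp$ is only well-defined up to conjugation (from the choice of $\p$ vs.\ $\overline\p$), which is harmless since $\Re\Hp$ is insensitive to it.

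Next I would bound the power sums. Fix $n\in\N_+$ and consider $S_n(T):=\sum_{q\le T}\Re(\Hp^{\,n})$ where $q$ ranges over primes in $\Pquattro$-type conditions — more precisely over rational primes $p\equiv1\pmod3$, $p\notin\Sigma_\sform$, $p\le T$. Since $\Hp=H(\p)$, we have $\Hp^{\,n}=H(\p)^n=H^n(\p)$, so $S_n(T)$ is (up to the at most $O(\sqrt T)$ inert/ramified primes and the finitely many bad primes, and up to the trivial identification of conjugate pairs $\p,\overline\p$ with one rational prime) a partial sum of $\Re H^n(\p)$ over degree-one primes $\p$ of $K$ with $N\p\le T$. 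Because $H$ has infinite order, $H^n$ is a nontrivial unitary Hecke character for every $n\ge1$, so \cref{pnt} applies: with $v_\xxi(H^n)\ll n$ (the infinity-type exponents scale linearly in $n$ and the finite part is fixed), for $n\le(\log T)^{1/4}$, say, we get $v_\xxi(H^n)\le\sqrt{\log T}$ and hence $|S_n(T)|\ll_{\,\sform} T\,e^{-c_2\sqrt{\log T}}$. Summing the Erd\H{o}s--Tur\'an error over $n\le N$ with $N=\lfloor(\log T)^{1/4}\rfloor$ gives an error term $\ll \tfrac{T}{N}+N\cdot T e^{-c_2\sqrt{\log T}}\log N \ll T e^{-\a\sqrt{\log T}}$ for a suitable $\a>0$, using \eqref{eq:error:log}.

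Then I would feed this into \cref{ET}, applied with the interval $[\cos\phi_2,\cos\phi_1]$ corresponding to $\Re\Hp\le\beta$, i.e.\ taking $\phi_1=0$, $\phi_2=\arccos(\beta)$ (so the density factor is $\tfrac{\phi_2-\phi_1}{\pi}=\tfrac1\pi\arccos\beta$); note $\Re\Hp\le\beta\iff\Re\Hp\in[\cos\pi,\cos(\arccos\beta)]=[-1,\beta]$. By \cref{ET} the count of $p\le T$ in $\Pform_{,\b}$ equals $\tfrac1\pi\arccos(\beta)\cdot\#\{p\le T:\ p\equiv1\ (3),\ p\notin\Sigma_\sform\}+E$ with $|E|\ll T e^{-\a\sqrt{\log T}}$. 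By Dirichlet's theorem (or \cref{pnt} applied to abelian characters) the main count is $\tfrac12\Li(T)+O(Te^{-\a\sqrt{\log T}})$, so combining gives $\#\P_{\sform,\b}\cap[1,T]=\d\Li(T)+O(Te^{-\a\sqrt{\log T}})$ with $\d=\tfrac1{2\pi}\arccos(-\beta)$ — here the argument of $\arccos$ flips to $-\beta$ because $\tfrac12\cdot\tfrac1\pi\arccos\beta=\tfrac1{2\pi}(\pi-\arccos(-\beta))$... wait: $\arccos\beta=\pi-\arccos(-\beta)$, so $\tfrac12\cdot\tfrac1\pi\arccos\beta\ne\tfrac1{2\pi}\arccos(-\beta)$ in general; I would therefore double-check the interval endpoints, most likely the correct reading is $\Re\Hp\le\beta$ corresponding to $\phi_1=\arccos\beta$ up to the complementary interval, giving density $\tfrac1\pi(\pi-\arccos\beta)=\tfrac1\pi\arccos(-\beta)$ before halving, hence $\d=\tfrac1{2\pi}\arccos(-\beta)$ as stated. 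Finally, nonemptiness of $\P_{\sform,\b}$ for every $\b\in(-1,1]$ follows since $\d>0$, so the count tends to infinity. The main obstacle is the first paragraph: correctly assembling $\Hp$ as a genuine infinite-order unitary Hecke character over $K=\Q(\zeta_3)$ with all normalizations (the minus sign, the $(N\p)^{-1/2}$, the conjugate) matched to \cref{erre:modp:3}, and verifying $v_\xxi(H^n)\ll n$ so that \cref{pnt} is applicable in the needed range of $n$; the equidistribution machinery afterward is routine.
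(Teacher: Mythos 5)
Your overall route coincides with the paper's: interpret $\Hp$ (up to conjugation and a sign) through powers of the unitary Hecke character $\J_3(\cdot)\bigl(\tfrac{a_1a_2a_3}{\cdot}\bigr)_3^{-1}$ of $\Q(\zeta_3)$, bound the power sums $S_n=\sum_{p\le T}\Re(\Hp^{\,n})$ via \cref{pnt} using $v_\xxi\ll n$, discard the $O(\sqrt T)$ degree-two primes, and conclude with \cref{ET}; your endpoint bookkeeping giving $\delta=\frac1{2\pi}\arccos(-\beta)$ is also what the paper does (it takes $\phi_1=\arccos\beta$, $\phi_2=\pi$). Your worry about $-\J_3$ not being multiplicative is cosmetic: the paper avoids it by working for each $n$ with the genuine Hecke character $H_n=\J_3^n\bigl(\tfrac{a_1a_2a_3}{\cdot}\bigr)_3^{-n}$, and since only $|S_n|$ enters Erd\H{o}s--Tur\'an, the sign $(-1)^n$ is irrelevant.

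The step that fails is the truncation $N=\lfloor(\log T)^{1/4}\rfloor$. The Erd\H{o}s--Tur\'an error contains the term $\#\mcl A/N\asymp T(\log T)^{-1/4}$, and by \eqref{eq:error:log} this is much \emph{larger} than $Te^{-\alpha\sqrt{\log T}}$ for every $\alpha>0$; the inequality you assert goes the wrong way. As written you only get the count with error $O(T(\log T)^{-1/4})$, not the error claimed in \cref{equi:3}, and that weaker error is not merely aesthetic: it would not survive the partial summation needed later (e.g.\ \eqref{eq:asymp:1/2} of \cref{asymptotic}, used in \cref{tau} for $s=3$, requires an error term that stays below $\sqrt T/\log T$ after summation by parts). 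The repair is to truncate exponentially later: the simplified bound $|S_n|\ll Te^{-c_2\sqrt{\log T}}+\sqrt T$ from \eqref{eq:pnt} only needs $\log v_\xxi(H_n)\le\sqrt{\log T}$, i.e.\ $n\le c_3^{-1}e^{\sqrt{\log T}}$, not $v_\xxi(H_n)\le\sqrt{\log T}$ as you imposed. With $N=\lfloor c_3^{-1}e^{\sqrt{\log T}}\rfloor$ (the paper's choice) one has $T/N\ll Te^{-\sqrt{\log T}}$, and the correct form of the Erd\H{o}s--Tur\'an sum, $\sum_{n\le N}|S_n|/n\le(1+\log N)\max_{n\le N}|S_n|\ll\sqrt{\log T}\,\bigl(Te^{-c_2\sqrt{\log T}}+\sqrt T\bigr)$, is $\ll Te^{-\alpha\sqrt{\log T}}$ for any $\alpha<\min\{1,c_2\}$; note that your written bound carries a spurious extra factor $N$, which would be harmless at your small $N$ but fatal at the correct one.
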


\begin{proof}
Notice that $\P_{\sform,1}$ is just the set of all primes $p\equiv 1\pmod 3$ with $p\not\in\Sigma_\sform$, because $\rhop\leq 1$ is always satisfied. 
Now, recall from \cref{hecke} that the Jacobi sum symbol and the power residue symbols are Hecke characters of cyclotomic fields. 
For every $n\in\N$ we consider the unitary Hecke character
\[
H_{n}(\cdot) := \J_3(\cdot)^{n}
 \left(\frac {a_1a_2a_3}{\cdot}\right)_3^{-n}
\]
of the number field $K=\Q(e^{2\pi i /3})$. 
We have that $\m=\m_{a_1a_2a_3,3}\cap\m_{\J_3}\subseteq \O_K$ is a defining ideal of $H_n$ for every $n\in\N$ and the infinity type of $H_n$ is $\alpha\mapsto (\alpha/\abs{\alpha})^n$. 
Since the field $K$ has degree $d=2$, the size of $H_n$ satisfies $v_\xxi(H_n)\leq c_3 n$ for some $c_3>0$ independent of $n$.  
Moreover for $n\neq 0$ the character $H_n$ is nontrivial, because, since $\m$ is a lattice in $\C$, there exists $\alpha\in\Z[e^{{2\pi i}/{3}}]$ such that $\alpha\equiv 1\pmod \m$ and $\alpha^n\not\in\R$. 

The primes $\p\in\I_\m$ above a prime $\primo\equiv 1\pmod 3$ come in conjugate pairs, they satisfy $N\p=\primo$ and we have either $\p=\pi_{3,\primo}$ or $\wb{\p}=\pi_{3,\primo}$. Therefore from the definitions we have
\[
	H_n(\p)+H_n(\wb{\p}) = 2\Re((H_{\sform,\primo})^n).
\]
On the other hand the primes $\p\in\I_\m$ above a prime $\primo\equiv 2\pmod 3$ satisfy $N\p=\primo^2$, and so there are at most $\sqrt T$ of them satisfying $N\p\leq T$, for every given $T\great 0$. 
By these remarks, and by \cref{pnt} applied to $H_n(\cdot)$ we get, for every $T\geq 2$  and every positive integer $n\leq c_3^{-1} \exp(\sqrt {\log T} )$: 
\begin{equation}\label{equi:H:3:1}
\left|
\sum_{p \in\P_{\sform,1}\cap[1,T]}
\Re( (H_{\sform,p})^n)
\right|
        \leq   c_1\, T\, e^{-c_2\sqrt{\log T}} + \sqrt T,
\end{equation}
for some absolute 
 constants $c_1, c_2\great 0$. 
We observe that $\Hp$ belongs to the unit circle for all $p\in\P_{\sform,1}$ and that $\#\P_{\sform,1}\cap[1,T] = \frac 1 2 \Li(T)+ O(T e^{-c_{3}\sqrt{\log T}})$ for some effective absolute constant $c_3\great 0$, by the prime number theorem on arithmetic progressions. 
Now by \cref{ET} applied with $\phi_1=\op{arccos}(\beta)$, $\phi_1=\pi$ and $N=\lfloor  c_3^{-1} e^{\sqrt {\log T}}\rfloor$ we get the asymptotics displayed in \eqref{equi:H:3}, for any $\alpha\less \min\{1,c_2,c_3\}$.
\end{proof}

\subsection{Equidistribution of $\Hpp$}\label{sec:equidistribution:4}

For a biquadratic form $\form$ we need  that $\Hpp$ equidistributes when $q\to\infty$ ranges in the set $q\in\P_{\sform,\u}$ that we defined in \cref{sec:kummer}, for a fixed $\u\in\mu_4^4\times\{\pm 1\}$. 
To detect those primes among the primes in $\Pquattro$ 
(and so to handle sums indexed by them) we use character sums, as follows. 
We define the auxiliary polynomial $\pol(x):=1+x+x^2+x^3$ and the auxiliary sum
\begin{equation}\label{eq:detect}
\Saux(\u,\mbf v)
	=     \sum_{\k\in(\Z/4\Z)^5}  C(\u,\k) \prod_{i=1}^5 v_i^{k_i}   
\end{equation}
for all $\u,\mbf v\in\mu_4^4\times\{\pm 1\}$, where $C(\u,\k)= {2^{1-\epsilon_\u}} {4^{-5}}\Re(u_1^{k_1}u_2^{k_2}u_3^{k_3}u_4^{k_4}u_5^{k_5})$, $\epsilon_\u=1$ if $\u=\wb\u$ and $\epsilon_\u=0$ otherwise.  
Observe that 
\[
\Saux(\u,\mbf v)=   \frac {2^{-\epsilon_\u}} {4^5}\left(\prod_{i=1}^5\pol(u_i v_i) + \prod_{i=1}^5\pol(u_i^{-1} v_i)\right), 
\]  
from which we see that $\Saux(\u,\mbf v)=1$ if $\mbf v\in\{\u,\wb\u\}$ and $\Saux(\u,\mbf v)=0$ otherwise. 
Therefore, for $q\in\Pquattro$ we have $q\in \P_{\sform,\u}$ if and only if $\Saux(\u,\chipp(\ua,-1))=1$, where $\P_{\sform,\u}$ and $\chipp(\ua,-1)$ are as in \cref{sec:kummer}. 
In particular, for all $T\geq 1$, all $\u\in\mu_4^4\times\{\pm 1\}$ and every function $h: \Pquattro\to \C$ we have
\begin{equation}\label{eq:detect:2}
\sum_{q\in \P_{\sform,\u}\cap [1,T]} h(q) 
	= \sum_{q\in \Pquattro\cap [1,T]} \Saux(\u,\chipp(\ua,-1))\ h(q).
\end{equation}
This is a common technique in analytical number theory, see e.g. \cite[Lemma 4]{Kubilyus} for an application of this trick in a similar context. 

\begin{proposition}\label{equi:4}
Let $\form= a_1 x_1^4 +\ld + a_4 x_4^4$ be a biquadratic diagonal form with $a_1,\ld,a_4\in\Z\setminus \{0\}$. 
For all $\u\in\mu_4^4\times\{\pm 1\}$ and all $\b\in(-1,1]$ let
\[
\P_{\sform,\u,\b} := \{q\in\P_{\sform,\u}:\  \rhopp\leq \beta\}.
\]
If $\u$ is in the image of $\aphi_\sform$, 
then $\P_{\sform,\u,\b}$ is nonempty, and for $T\to \infty$ we have
\begin{equation}\label{equi:H:4}
\#\P_{\sform,\u,\b}\cap [1,T] = \d \Li(T)+ O(T e^{-\a\sqrt{\log T}})
\end{equation}
for some effective absolute constant $\a\great 0$ and with $\d \geq \frac 1 {1024\pi} \op{arccos}(-\b)$.
\end{proposition}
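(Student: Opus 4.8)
The plan is to mimic closely the proof of \cref{equi:3} (the cubic case), replacing the Jacobi sum symbol $\J_3$ by $\J_4$ and using the detector identity \eqref{eq:detect:2} to restrict the sum over primes to the subset $\P_{\sform,\u}$. Concretely, work over $K=\Q(i)$, which has degree $d=2$, and for every $n\in\N$ consider the unitary Hecke character
\[
H_n(\cdot):=\J_4(\cdot)^n\left(\frac{a_1a_2a_3a_4}{\cdot}\right)_4^{-n}
\]
of $K$. By \cref{hecke}, $\J_4(\cdot)$ and the power residue symbol are Hecke characters, $H_n$ is unitary of infinity type $\alpha\mapsto(\alpha/\abs\alpha)^n$, and a common defining ideal $\m$ can be taken independent of $n$; as in the cubic case, for $n\neq 0$ the character $H_n$ is nontrivial because $\m$ is a lattice in $\C$ and one can find $\alpha\equiv 1\pmod\m$ with $\alpha^n\notin\R$. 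Since $d=2$ the size satisfies $v_\xxi(H_n)\leq c\,n$ for some $c>0$ independent of $n$, so \cref{pnt} applies for all $n\leq c^{-1}\exp(\sqrt{\log T})$.

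Next I would translate the prime-ideal sum into a sum over rational primes $q\in\Pquattro$. A rational prime $q\equiv1\pmod4$ splits as $q=\pi_{4,q}\wb\pi_{4,q}$ with $N\pi_{4,q}=q$, and from \cref{erre:modp:4} and \cref{def:symbol} one has $H_n(\pi_{4,q})+H_n(\wb\pi_{4,q})=2\Re\big((\Hpp)^n\big)$, where $\Hpp=\wbchipp(a_1a_2a_3a_4)\pi_{4,q}^2/q$; the inert primes $q\equiv3\pmod4$ contribute at most $\sqrt T$ ideals with norm $\leq T$ and are absorbed into the error term. Thus \cref{pnt} gives, for $n\leq c^{-1}\exp(\sqrt{\log T})$,
\[
\Big|\sum_{q\in\Pquattro\cap[1,T]}\Re\big((\Hpp)^n\big)\Big|\leq c_1\,T\,e^{-c_2\sqrt{\log T}}+\sqrt T .
\]
Now apply the detector \eqref{eq:detect:2} with $h(q)=\Re\big((\Hpp)^n\big)$. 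Expanding $\Saux(\u,\chipp(\ua,-1))$ via \eqref{eq:detect} as a finite $\mu_4$-linear combination (over $\k\in(\Z/4\Z)^5$) of terms $\chipp(a_1)^{k_1}\cdots\chipp(a_4)^{k_4}\chipp(-1)^{k_5}$, each such product times $(\Hpp)^n$ is again (up to a root of unity) of the form $(\Hpp')^n$ for a Hecke character $H'_n$ built from $\J_4$ and appropriate power residue symbols of $a_1,\dots,a_4,-1$, all with a common defining ideal and size $\ll n$. Applying \cref{pnt} to each of these finitely many (at most $4^5$) characters and recombining, one obtains
\[
\Big|\sum_{q\in\P_{\sform,\u}\cap[1,T]}\Re\big((\Hpp)^n\big)\Big|\leq c_1'\,T\,e^{-c_2'\sqrt{\log T}}
\]
for all $n\leq (c')^{-1}\exp(\sqrt{\log T})$, with effective absolute constants. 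Here one must check that, when we run these characters through the detector, the only character that can be trivial is the one with $n=0$ (or else the corresponding exponential sum trivially contributes $\#\P_{\sform,\u}\cap[1,T]$ which is itself $\ll_\epsilon$ the main term and harmless); this is where the hypothesis that $\u$ lies in the image of $\aphi_\sform$ together with \cref{chebotarev} enters, guaranteeing $\#\P_{\sform,\u}\cap[1,T]=\d\Li(T)+O(Te^{-\a\sqrt{\log T}})$ with $\d\geq\frac1{1024}$.

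Finally, feed these bounds into \cref{ET} with $\mcl A=\P_{\sform,\u}\cap[1,T]$, $h_q=\Hpp$, $\phi_1=\op{arccos}(\b)$, $\phi_2=\pi$, and $N=\lfloor(c')^{-1}e^{\sqrt{\log T}}\rfloor$: the error term $\frac{\#\mcl A}{N}+\sum_{n=1}^N\frac1n|S_n|$ is $O\big(\#\mcl A\cdot e^{-\sqrt{\log T}}+(\log N)\,T\,e^{-c_2'\sqrt{\log T}}\big)=O(T e^{-\a\sqrt{\log T}})$ after using \eqref{eq:error:log}, yielding \eqref{equi:H:4} with $\d\geq\frac{1}{1024\pi}\op{arccos}(-\b)$ (the product of the Chebotarev density $\geq\frac1{1024}$ and the equidistribution factor $\frac1\pi\op{arccos}(-\b)$). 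Nonemptiness of $\P_{\sform,\u,\b}$ for $\b\in(-1,1]$ follows since $\op{arccos}(-\b)>0$ forces the right side to tend to $+\infty$. The main obstacle is bookkeeping: verifying uniformly in $n$ that the finitely many auxiliary Hecke characters produced by the detector are genuine Hecke characters with a common defining ideal and size $\ll n$, and that nontriviality holds for $n\neq0$; the analytic input (\cref{pnt}, \cref{ET}) is then routine, exactly as in the cubic case.
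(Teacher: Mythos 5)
Your overall strategy is exactly the paper's: interpret $(\Hpp)^n$ through Jacobi-sum Hecke characters over $\Q(i)$, use the detector \eqref{eq:detect:2} to restrict sums to $\P_{\sform,\u}$, absorb the detector's finite-order factors into a family of auxiliary Hecke characters with a common defining ideal and size $\ll n$, apply \cref{pnt} uniformly for $n\leq c^{-1}\exp(\sqrt{\log T})$, and finish with \cref{ET} and \cref{chebotarev}. However, your key displayed identity is false as written. With your definition $H_n=\J_4(\cdot)^n\big(\frac{a_1a_2a_3a_4}{\cdot}\big)_4^{-n}$ one gets, at a split prime $\pp$ above $q$,
\[
H_n(\pp)=(-1)^n\Big(\frac{\pi_{4,q}}{\sqrt q}\Big)^{n}\,\wbchipp(a_1a_2a_3a_4)^n,
\qquad\text{whereas}\qquad
(\Hpp)^n=\wbchipp(a_1a_2a_3a_4)^n\Big(\frac{\pi_{4,q}^2}{q}\Big)^{n},
\]
because $\Hpp$ is built from the \emph{square} of the Jacobi sum (\cref{erre:modp:4}), i.e.\ from $\J_4(\pp)^2$. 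Hence $H_n(\pi_{4,q})+H_n(\wb\pi_{4,q})=2\Re(H_n(\pp))\neq 2\Re((\Hpp)^n)$ in general, and no reindexing $n\mapsto 2n$ of your $H_n$ repairs it, since the power residue symbol must stay at exponent $-n$ while $\J_4$ goes to exponent $2n$. The fix is what the paper does: take $H_{n,\k}:=\J_4(\cdot)^{2n}\big(\frac{a_1a_2a_3a_4}{\cdot}\big)_4^{-n}\prod_{i}\big(\frac{a_i}{\cdot}\big)_4^{k_i}\big(\frac{-1}{\cdot}\big)_4^{k_5}$, which also kills the sign coming from the minus in \cref{def:symbol} and has infinity type $\alpha\mapsto(\alpha/\abs{\alpha})^{2n}$; your nontriviality and $v_\xxi\ll n$ arguments then go through verbatim.

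Two smaller points. Your parenthetical that a trivial auxiliary character would be ``harmless'' is wrong: if some $H_{n,\k}$ with fixed $n\neq 0$ were trivial, the corresponding prime-ideal sum would be of order $T/\log T$, comparable to $\#\P_{\sform,\u}\cap[1,T]$, and the Erd\H{o}s--Tur\'an error would be of the same order as the main term. The point is moot only because the nontrivial infinity type forces $H_{n,\k}$ to be nontrivial for every $\k$ once $n\neq 0$, which is the argument you should rely on. Also, the ``recombining'' step needs the conjugation bookkeeping that the paper makes explicit in \eqref{eq:detect:3}: the conjugate prime pairs with the character indexed by $-\k$, and one uses $C(\u,\k)=C(\u,-\k)$ before summing over $\k$. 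With these corrections your argument coincides with the paper's proof.
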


\begin{proof}
For all $n\in\N_+$ and all $\k=(k_1,\ldots,k_5)\in(\Z/4\Z)^5$ we define the unitary Hecke character
\[
H_{n,\k}(\cdot) := \J_{4}(\cdot)^{2n}
 \Big(\frac {a_1a_2a_3a_4}{\cdot}\Big)_{4}^{-n}
 \Big(\frac {a_1}{\cdot}\Big)_{4}^{k_1}
 \Big(\frac {a_2}{\cdot}\Big)_{4}^{k_2}
 \Big(\frac {a_3}{\cdot}\Big)_{4}^{k_3} 
 \Big(\frac {a_4}{\cdot}\Big)_{4}^{k_4}
 \Big(\frac {-1}{\cdot}\Big)_{4}^{k_5}
\]
of the number field $K=\Q(i)$. 
For every $n$ and $\k$ as above we have that 
\[
\m=\m_{a_1,4}\cap\m_{a_2,4}\cap\m_{a_3,4}\cap\m_{a_4,4}\cap\m_{-1,4}\cap\m_{\J_4}\subseteq\O_K
\]
 is a defining ideal of $H_{n,\k}$ and $\alpha\mapsto (\alpha/\abs{\alpha})^{2n}$ is its infinity type. 
Observe that $H_{n,\k}$ is nontrivial for $n\neq 0$ because, $\m$ being a lattice in $\C$, there exists $\alpha\in\Z[i]$ such that $\alpha\equiv 1\pmod \m$ and $\alpha^{2n}\not\in\R$. 
Moreover, the size of $H_{2n,\k}$ satisfies $v_\xxi(H_{n,\k})\leq c_3 n$ for some $c_3>0$ independent of $n$. 

The primes $\pp\in\I_\m$ with degree $\deg(\pp)\neq 1$ are precisely those above a prime $\pprimo\equiv 3\pmod 4$. 
These primes satisfy $N\pp=\pprimo^2$, and so there are at most $\sqrt T$ of them satisfying $N\pp\leq T$, for every given $T\great 0$. 
Therefore, by \cref{pnt} applied to $H_{n,\k}(\cdot)$ we get, for every $T\geq 2$, every positive integer $n\leq c_3^{-1} \exp(\sqrt{\log T})\in\N_+$ and every $\k\in(\Z/4\Z)^5$:
\begin{equation}\label{eq:hnk:1}
\left| {
\sum_{\substack{\pp\in\op{Spec}\O_K\cap\I_\m\\ N\pp\leq T,\, \deg(\pp)=1}}
  H_{n,\k}(\pp)
}\right|
  \leq c_1\, T\, e^{-c_2\sqrt {\log T}},
\end{equation}
for some 
constants $c_1,c_2\great 0$ independent of $n\in\N_+$ and $\k\in(\Z/4\Z)^5$.
The primes $\pp\in\I_\m$ with $\deg(\pp)$ come in conjugate pairs, they satisfy $N\p=\pprimo$ for some $\pprimo\equiv 1\pmod 4$ and we have either $\pp=\pi_{4,\pprimo}$ or $\wb{\pp}=\pi_{4,\pprimo}$. 
In particular, given such $\pp$ and $\mbf v = \chipp(\ua,-1)$ we have:
\begin{equation}\label{eq:detect:3}
H_{n,\k}(\pp) + H_{n,-\k}(\wb\pp) = 2\Re((\Hpp)^n) \prod_{i=1}^5 v_i^{k_i}
\end{equation}
 for all $n\in\N_+$ and all $\k\in(\Z/4\Z)^5$. 
Then \eqref{eq:detect}, \eqref{eq:detect:2} and \eqref{eq:detect:3} imply
\begin{equation}\label{eq:hnk:2}
\sum_{\pprimo \in \P_{\sform,\u}\cap [1,T]}  
    2\Re((\Hpp)^n) 
= 
\sum_{\k\in(\Z/4\Z)^5} C(\u,\k)
 \sum_{\substack{\pp\in\op{Spec}\O_K\cap\I_\m\\ N\pp\leq T,\,\deg(\pp)=1}}
  H_{n,\k}(\pp).
\end{equation}
for some real numbers $C(\u,\k)$ satisfying $C(\u,\k)=C(\u,-\k)$ and $\abs{C(\u,\k)}\leq \frac{1}{512}$. 
Finally, by \eqref{eq:hnk:1} and \eqref{eq:hnk:2} we deduce that 
\begin{equation}\label{eq:hnk:3}
\left|
\sum_{q \in \P_{\sform,\u}\cap [1,T]}
     \Re((\Hpp)^n) 
\right|
        \leq c_1\,  T\, e^{-c_2\sqrt{\log T}},
\end{equation}
for all $T\geq 2$ and all positive $n\leq c_3^{-1} \exp{\sqrt{\log T}}$.
We observe that $\Hpp$ belongs to the unit circle for all $p\in\P_{\sform,\u}\cap [1,T]$ and that $\#\P_{\sform,\u}\cap [1,T]$ is estimated in \cref{chebotarev}. 
Now by \cref{ET} applied with $\phi_1=\op{arccos}(\b)$, $\phi_1=\pi$ and $N=\lfloor c_3^{-1} \exp{\sqrt{\log T}}\rfloor$ we get the asymptotics displayed in \eqref{equi:H:3}, for any $\alpha\less \min\{1,c_2,\a'\}$. 
\end{proof}

%
%

\section{Detecting the existence of long gaps - the proof}
\label{sec:main}

\subsection{Congruences with few solutions}
\label{sec:main:cong}

For the remaining part of the article let $\s\in\{3,4\}$ and let $\form= a_1 x_1^\s +\ld + a_\s x_\s^\s$, with $a_1,\ld,a_\s\in\N_+$ be either a cubic diagonal form or a biquadratic diagonal form that is not exceptional according to \cref{def:exceptional}.

\begin{proposition}\label{main:modp}
Let $\s, \form$ be as above. 
Then we can choose a set $\P_\sform$ of prime numbers and effectively computable absolute constants $\ax,\bx,\dxx\great 0$ such that for all $p\in\P_\sform$ and all $m\in\Z$ we have 
\begin{align}
\sol_\sform(0,p) & \leq p^{\s-1}\left( 1  - \bx (p^{1-\frac \s 2} - p^{- \frac \s 2})\right),\label{eq:main:modp:0}\\
\sol_\sform(m,p) & \leq p^{\s-1}\left( 1  + (\s-1)^\s p^{\frac 1 2 -\frac \s 2}\right),\label{eq:main:modp:1}
\end{align}
and for $T\to \infty$ we have, for some $\dx\geq\dxx$:
\begin{equation}\label{eq:main:modp:P}
\#\P_\sform \cap[1,T]= \dx \Li(T)  + O ( T e^{-\ax \sqrt{\log T}}).
\end{equation}
\end{proposition}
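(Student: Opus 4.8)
The plan is to obtain the proposition by feeding the exact formulas of \cref{sec:diagonal} into the equidistribution results of \cref{sec:equidistribution}, using in addition --- when $\s=4$ --- the inequality $\Kpp\le 1$ of \cref{sec:few}. Throughout I would take $\bx:=\tfrac12$ and let $\ax$ be the effective absolute constant furnished by \cref{equi:3} if $\s=3$, resp.\ by \cref{equi:4} if $\s=4$; since $\s\in\{3,4\}$ is a single fixed choice, replacing $\ax$ and $\dxx$ by the smaller of the two corresponding values makes them genuinely absolute. In each case $\P_\sform$ is chosen as an equidistribution-controlled subfamily of the primes $p\equiv 1\pmod\s$ with $p\notin\Sigma_\sform$, namely one on which $\rhop$ (resp.\ $\rhopp$) is bounded above by a fixed negative constant.

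First I would handle $\s=3$. Apply \cref{equi:3} with $\b=-\tfrac14$ and set $\P_\sform:=\P_{\sform,-1/4}$; this immediately gives \eqref{eq:main:modp:P} with $\dx=\tfrac1{2\pi}\op{arccos}\tfrac14=:\dxx>0$ and the stated error term. For $p\in\P_\sform$ we have $\rhop\le-\tfrac14$, so \cref{erre:modp:3} yields
\[
\sol_\sform(0,p)=p^2+2\rhop(p\sqrt p-\sqrt p)\le p^2-\tfrac12(p\sqrt p-\sqrt p)=p^2\bigl(1-\tfrac12(p^{-1/2}-p^{-3/2})\bigr),
\]
which is \eqref{eq:main:modp:0} for $\s=3$. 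For \eqref{eq:main:modp:1}: if $p\ndivides m$, \cref{erre:1} (where the degree $k$ equals $\s$) gives $\sol_\sform(m,p)\le p^2+8p=p^2(1+8p^{-1})$; if $p\divides m$ then $\sol_\sform(m,p)=\sol_\sform(0,p)\le p^2$, which is smaller still.

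Next, $\s=4$. The new ingredient is the control of $\Kpp$ on a positive-density set. Since $\form$ is not exceptional, the contrapositive of \cref{kummer:exceptional} provides $\chi\in\Hom(\Delta_\sform,\mu_4)$ with $\bphi_\sform(\chi)$ satisfying \eqref{eq:c:e:0}; set $\u:=\aphi_\sform(\chi)$, so that $\u$ lies in the image of $\aphi_\sform$ and $\proj(\u)=\bphi_\sform(\chi)$ satisfies \eqref{eq:c:e:0}. Apply \cref{equi:4} with this $\u$ and $\b=-\tfrac34$ and put $\P_\sform:=\P_{\sform,\u,-3/4}$; this gives \eqref{eq:main:modp:P} with $\dx\ge\tfrac1{1024\pi}\op{arccos}\tfrac34=:\dxx>0$ and an effective absolute $\ax$. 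For $q\in\P_\sform$ we have $\Kpp\le1$ by \cref{K:exceptional} and $\rhopp\le-\tfrac34$, hence $2\rhopp+\Kpp\le-\tfrac12$, so \cref{erre:modp:4} gives
\[
\sol_\sform(0,q)=q^3+(2\rhopp+\Kpp)\,q(q-1)\le q^3-\tfrac12\,q(q-1)=q^3\bigl(1-\tfrac12(q^{-1}-q^{-2})\bigr),
\]
which is \eqref{eq:main:modp:0}. For \eqref{eq:main:modp:1}: if $q\ndivides m$, \cref{erre:1} gives $\sol_\sform(m,q)\le q^3+81\,q^{3/2}=q^3(1+81q^{-3/2})$; if $q\divides m$, use \eqref{eq:main:modp:0}.

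I expect the main (and essentially only) difficulty to be the calibration in the biquadratic case, which is also the place where non-exceptionality is used: one needs simultaneously $\Kpp\le1$ on a set of positive density \emph{and} $\rhopp$ pushed far enough to the negative side that $2\rhopp+\Kpp$ stays below a fixed negative number --- which is exactly what combining \cref{K:exceptional} with the equidistribution of $\Hpp$ in \cref{equi:4} delivers. A minor technical point is keeping $\ax$ and the implied constants independent of $\form$ (and of the chosen $\u$); this is fine because $[L:\Q]\le1024$ holds uniformly in $\form$, so the Chebotarev and Hecke-$L$ inputs of \cref{sec:few,sec:equidistribution} are uniform.
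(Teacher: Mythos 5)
Your proposal is correct and follows essentially the same route as the paper: choose $\P_\sform$ via \cref{equi:3} (resp.\ \cref{kummer:exceptional}, \cref{K:exceptional}, \cref{chebotarev} and \cref{equi:4}) so that $2\rhop$ (resp.\ $2\rhopp+\Kpp$) is at most $-\bx$, then read off \eqref{eq:main:modp:0} from \cref{erre:modp:3} (resp.\ \cref{erre:modp:4}) and \eqref{eq:main:modp:1} from \cref{erre:1} together with the $p\divides m$ case. Your explicit calibration in the biquadratic case (taking $\rhopp\leq -(1+\bx)/2$, e.g.\ $\b=-3/4$ with $\bx=1/2$, so that $2\rhopp+\Kpp\leq-\bx$) is the correct reading of the paper's choice $\P_{\sform,\u,-\b/2}$, which as written would only give $2\rhopp+\Kpp\leq 1-\b$.
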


\begin{proof}
If $\s=3$ the inequality \eqref{eq:main:modp:0} and the asymptotics \eqref{eq:main:modp:P} follow from \cref{erre:modp:3} and \cref{equi:3} by choosing any $\bx\in (0,2)$ and letting $\P_\sform:=\P_{\sform,-\bx/2}$. 
If $\s=4$ we may choose any $\bx\in (0,1)$, and let $\u$ be any element in the image of $\aphi_\sform$ such that $\proj(\u)$ satisfies \eqref{eq:c:e:0}. 
Then \eqref{eq:main:modp:0} and \eqref{eq:main:modp:P} follow from \cref{erre:modp:4}, \cref{chebotarev} and \cref{equi:4} with $\P_\sform:=\P_{\sform,\u,-\b/2}$. 
For both $\s\in\{3,4\}$ and for the same choice of $\P_{\sform}$, \eqref{eq:main:modp:1} follows from \eqref{eq:main:modp:0} when $p\divides m$ and it follows from \cref{erre:1} otherwise.

\end{proof}

From \cref{main:modp} we deduce that a biquadratic diagonal form satisfying $\sol_\sform(0,p)\geq p^3$ for all but finitely many primes $p$ must be exceptional in the sense of \cref{def:exceptional}. 
This observation, together with the arguments of \cref{sec:exceptional}, completes the proof of  \cref{thm:exceptional:iff}. 
For non-exceptional diagonal forms, \cref{main:modp} implies that the ratio $\sol_\sform(m,M)/M^{\s-1}$ can be made strictly less than 1 for suitable $m$ and $M=p$ prime. 
In the next proposition we make this ratio arbitrarily small by using products of primes.

\begin{proposition}\label{main:modM}
Let $\s,\form,\bx,\P_\sform$ be as in \cref{main:modp}. 
Let $\P_1\subseteq\P_2\subset \P_\sform$ with $\#\P_2\less\infty$ and let $m\in\Z$ with $m\equiv 0 \pmod p$ for all $p \in\P_1$. 
Then we have $\sol_\sform(m,M)\leq \e M^{s-1}$ for  $M:=\prod\limits_{p\in\P_2} p$ and all $\e\great 0$ that satisfy
\begin{equation}\label{eq:modM:e}
\log \e 
\geq 
-
\sum_{p\in\P_1}   \bx (p^{1 - \frac \s 2} - p^{- \frac \s 2})
+
\sum_{p\in\P_2\setminus \P_1}   (\s-1)^\s   p^{\frac 1 2 - \frac \s 2}.
\end{equation}
\end{proposition}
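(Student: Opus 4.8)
The plan is to combine the multiplicativity of $\sol_\sform(\cdot,M)$ in its second argument with the two pointwise estimates of \cref{main:modp}, exactly as sketched in the overview of Step~2 in \cref{sec:plan:2}. First I would invoke \cref{erre:multiplicative} to write $\sol_\sform(m,M)=\prod_{p\in\P_2}\sol_\sform(m,p)$, since $M=\prod_{p\in\P_2}p$ is squarefree. Next I would split the product over $\P_2$ into the subproduct over $\P_1$ and the subproduct over $\P_2\setminus\P_1$. For $p\in\P_1$ we have $m\equiv 0\pmod p$ by hypothesis, so \eqref{eq:main:modp:0} applies and gives $\sol_\sform(m,p)\leq p^{\s-1}\bigl(1-\bx(p^{1-\s/2}-p^{-\s/2})\bigr)$; for $p\in\P_2\setminus\P_1$ the weaker bound \eqref{eq:main:modp:1} gives $\sol_\sform(m,p)\leq p^{\s-1}\bigl(1+(\s-1)^\s p^{1/2-\s/2}\bigr)$. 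Multiplying, and factoring out $\prod_{p\in\P_2}p^{\s-1}=M^{\s-1}$, yields
\[
\frac{\sol_\sform(m,M)}{M^{\s-1}}
\leq
\prod_{p\in\P_1}\bigl(1-\bx(p^{1-\frac\s2}-p^{-\frac\s2})\bigr)
\prod_{p\in\P_2\setminus\P_1}\bigl(1+(\s-1)^\s p^{\frac12-\frac\s2}\bigr).
\]

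Then I would take logarithms and use the elementary inequalities $\log(1+t)\leq t$ for $t>-1$ (valid for the second product, and also for the first since $\bx(p^{1-\s/2}-p^{-\s/2})\in[0,1)$ for $p\geq 2$ and the admissible range of $\bx$). This produces
\[
\log\frac{\sol_\sform(m,M)}{M^{\s-1}}
\leq
-\sum_{p\in\P_1}\bx(p^{1-\frac\s2}-p^{-\frac\s2})
+\sum_{p\in\P_2\setminus\P_1}(\s-1)^\s p^{\frac12-\frac\s2}.
\]
The right-hand side is precisely the quantity in \eqref{eq:modM:e}, so if $\e>0$ satisfies that inequality then $\log\bigl(\sol_\sform(m,M)/M^{\s-1}\bigr)\leq\log\e$, i.e. $\sol_\sform(m,M)\leq\e M^{\s-1}$, as claimed.

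I expect no serious obstacle here: the statement is essentially a bookkeeping consequence of \cref{main:modp} and \cref{erre:multiplicative}. The only point requiring a moment of care is the justification that $\log(1+t)\leq t$ is legitimately applied to the factors coming from $\P_1$, i.e. checking that $\bx(p^{1-\s/2}-p^{-\s/2})<1$ for all primes $p$ in the admissible range of $\bx$ (recall $\bx\in(0,2)$ when $\s=3$ and $\bx\in(0,1)$ when $\s=4$); since $p^{1-\s/2}-p^{-\s/2}\leq p^{1-\s/2}\leq p^{-1/2}\leq 2^{-1/2}$ this is immediate, so each factor is a positive real number less than $1$ and the logarithm is well defined and the bound valid. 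The hypothesis $\P_1\subseteq\P_2$ guarantees that the two index sets in the final sum partition $\P_2$, so nothing is double-counted or omitted.
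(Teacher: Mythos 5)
Your proposal is correct and follows the paper's proof essentially verbatim: multiplicativity via \cref{erre:multiplicative}, the two bounds of \cref{main:modp} split over $\P_1$ and $\P_2\setminus\P_1$, then $\log(1+x)\leq x$. One tiny quibble with your side-justification: for $\s=3$ the bound $\bx\cdot 2^{-1/2}$ can exceed $1$ since $\bx$ may be close to $2$, but positivity of the factors still holds because every $p\in\P_\sform$ satisfies $p\equiv 1\pmod 3$, hence $p\geq 7$ (or simply because $\sol_\sform(0,p)\geq 1$ forces $1-\bx(p^{1-\s/2}-p^{-\s/2})>0$), and in any case the paper only needs $x>-1$, which is clear.
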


\begin{proof}
By \cref{erre:multiplicative} we have that 
\[
\frac{\sol_\sform(m,M)}{M^{\s-1}}= 
\prod_{p\in\P_1}\frac{\sol_\sform(0,p)}{p^{\s-1}}
\prod_{p\in\P_2\setminus\P_1}\frac{\sol_\sform(m,p)}{p^{\s-1}}.
\]
By \cref{main:modp} and the inequality $\log (1+x)\leq x$, valid for all $x\great -1$, we have that 
\[
\log\left(\prod_{p\in\P_1}\frac{\sol_\sform(0,p)}{p^{\s-1}} \right)	\leq \bx\sum_{p\in\P_1} (-p^{1 - \frac \s 2}+p^{-\frac \s 2}),
\]
and
\[
\log\left(\prod_{p\in\P_2\setminus\P_1}\frac{\sol_\sform(m,p)}{p^{\s-1}} \right)	\leq (\s-1)^\s \sum_{p\in\P_2\setminus\P_1} p^{\frac 1 2-\frac s 2},
\]
so the proposition follows.
\end{proof}

\subsection{Low density along arithmetic progressions}
\label{sec:main:ap}

Let $\s,\form$ be as in \cref{sec:main:cong}. 
\begin{definition}\label{def:erre}
For $n\in\N$ we define $\erre_\sform(n):=\#\Erre_\sform(n)$, where
\[
\Erre_\sform(n) :=\{\x\in\N^\s :\ \form = n\}.
\]
\end{definition}
In other words, $\erre_\sform(n)$ counts the number of representations of $n$ via the form $\form$. 
Then the image $\Set$ of $\form$ can be described as
\[
\Set:=\{n\in\N:\ \erre_\sform(n)\neq 0\}.
\]
The relative density of $\Set$ along an arithmetic progression of the form $m+M\N$ is related to $\sol_\sform(m,M)$. 
A trivial inequality relating the two is sufficient for our purpose. 

\begin{proposition}\label{ap:density}
Let $\s,\form,\Set$ be as above. 
Let $L,M,m\in\N_+$ with $m\less M$. 
Then
\begin{equation}
\label{eq:ap:density}
 \#\left(  \Set \cap (m+M\N)\cap [0,L^\s M^\s) \right) 	\leq 	\sol_\sform(m,M) L^\s.
\end{equation}
\end{proposition}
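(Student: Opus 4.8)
The plan is to exploit the fact that membership of $n$ in $\Set$ is controlled by the congruence class of $n$ modulo $M$, combined with the positive-definiteness bound that forces the variables $x_i$ to lie in a bounded range once $n$ is bounded. First I would fix the arithmetic progression $m+M\N$ and observe that an integer $n$ in this progression with $0\le n < L^\s M^\s$ can be written as $n = m + (h-1)M$ for exactly one integer $h$ with $1\le h \le \lceil L^\s M^{\s-1}\rceil$; in fact, since $m<M$, one checks that $n<L^\s M^\s$ forces $h\le L^\s M^{\s-1}$. So the set on the left-hand side of \eqref{eq:ap:density} injects into $\{1,\dots,\lfloor L^\s M^{\s-1}\rfloor\}$ via $n\mapsto h$.

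Next I would bound the number of such $n$ that actually lie in $\Set$. If $n=m+(h-1)M\in\Set$ then $\erre_\sform(n)\ge 1$, so there is at least one $\x\in\N^\s$ with $\form=n$. Reducing this solution modulo $M$ gives a point of $\Sol_\sform(m,M)$, because $n\equiv m\pmod M$. The key point is that distinct values of $n$ in the progression give, a priori, possibly the same residue class solution, so I need to count with multiplicity: the number of $n\le L^\s M^\s$ in $\Set\cap(m+M\N)$ is at most
\[
\sum_{\substack{n\equiv m\ (M)\\ 0\le n< L^\s M^\s}} \erre_\sform(n)
= \#\{\x\in\N^\s:\ 0\le \form < L^\s M^\s,\ \form\equiv m\pmod M\}.
\]
Now positive-definiteness enters: $\form = a_1 x_1^\s+\dots+a_\s x_\s^\s < L^\s M^\s$ with all $a_i\ge 1$ forces each $x_i^\s < L^\s M^\s$, hence $0\le x_i < LM$, so each $x_i$ ranges over at most $LM$ values, and in particular $\x\bmod M$ determines $\x$ up to at most $L^\s$ choices (each coordinate has at most $L$ lifts in $[0,LM)$ of any given residue mod $M$). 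Therefore the count above is at most $L^\s$ times the number of residue classes $\x\bmod M$ solving $\form\equiv m\pmod M$, which is exactly $\sol_\sform(m,M)\,L^\s$.

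Assembling these two observations yields \eqref{eq:ap:density}. The only mildly delicate point — and the one I would state carefully rather than wave through — is the bookkeeping that each solution $\x\in\N^\s$ with $\form<L^\s M^\s$ reducing to a fixed class modulo $M$ has at most $L^\s$ preimages, which is where the bound $x_i<LM$ (so at most $L$ lifts per coordinate) is used; this replaces the sharper but unnecessary observation that $\#\Set\cap(m+M\N)\cap[0,L^\s M^\s)$ is genuinely bounded by $\sol_\sform(m,M)L^\s$. No deep input is needed — this is the trivial direction of the heuristic ``density of $\Set$ in $m+M\N$ is $\sim \sol_\sform(m,M)/M^{\s-1}$'' — so I expect no real obstacle beyond getting the range of $h$ and the lifting count exactly right.
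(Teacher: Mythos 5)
Your proposal is correct and follows essentially the same route as the paper: bound the set by the sum of $\erre_\sform(n)$ over the progression, use positive-definiteness (all $a_i\geq 1$) to force $0\leq x_i\less LM$, and note that each solution class in $\Sol_\sform(m,M)$ has at most $L^\s$ lifts with coordinates in $[0,LM)$. The only difference is cosmetic bookkeeping (you phrase the sum via the congruence condition rather than indexing the fibers $\Erre_\sform(m+(k-1)M)$), so there is nothing to change.
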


\begin{proof}
We consider the map
\begin{center}
\begin{tabular}{>{$}r<{$}>{$}c<{$}>{$}c<{$}>{$}r<{$}>{$}c<{$}}
\phi: \ & \displaystyle \bigcup_{k=1}^{L^\s M^{\s-1}} 	& \Erre_\sform(m+(k-1)M) & \too 	& \Sol_\sform(m,M)\\
	& 								& (x_1,\ld,x_{\s})		& \longmapsto & (x_1\bmod M,\ld,x_{\s}\bmod M)
\end{tabular}
\end{center}

and for every $k\leq L^\s M^{\s-1}$ we notice that $m+(k-1)M\less L^\s M^\s$. This implies that for every $\x\in\Erre_\sform(m+(k-1)M)$ and all $j\in\{1,\ld,\s\}$ we have $0\leq x_j\less LM$.
For every residue class $\wb x$ modulo $M$ there are only $L$ integers $x$ satisfying $x\equiv \wb x\pmod M$ and $0\leq x\less LM$.
We deduce that every element in the image of $\phi$ can have at most $L^\s$ preimages. 
Therefore
\[
\sum_{k=1}^{L^\s M^{\s-1}} \erre_\sform(m+(k-1)M) \leq L^\s \,\sol_\sform(m,M).
\]
Since 
\[
\#\left(  \Set \cap (m+M\N)\cap [0,L^\s M^\s)\right)  \leq \sum_{k=1}^{L^\s M^{\s-1}} \erre_\sform(m+(k-1)M),
\]
the proposition follows.
\end{proof}

Recall that we are interested in intervals contained in $\N\setminus\Set$, so next we consider arithmetic progressions of intervals with fixed length. The union of these intervals in arithmetic progression forms a ``rectangle'' of integers $\{m+(h-1)M + k:\ h\leq H, k\leq K\}$. A set of this form is sometimes known as a Maier matrix.

\begin{proposition}\label{ap:interval}
Let $\s,\form,\Set$ be as above, let $L,M,m,K\in\N_+$ with $m+K\less M$ and let
$\A=(m+\N M)\cap [0,L^\s M^\s)$ be a truncated arithmetic progression. 
Now let
\[
\B:=\{a\in\A:\  \Set\cap(a+[1,K])\neq \emptyset\}
\] 
and suppose that 
\begin{equation}\label{eq:interval}
\sol_\sform(m+1,M) + \ld + \sol_\sform(m+K,M) \leq \frac 1 2 M^{\s-1}.
\end{equation}
Then $\#\A = L^\s M^{\s-1}$ and $\#\B\leq \frac 1 2 L^\s M^{\s-1}$.
\end{proposition}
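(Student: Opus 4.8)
The statement splits into two claims: the count $\#\A = L^\s M^{\s-1}$, and the upper bound $\#\B \leq \frac12 L^\s M^{\s-1}$. The first is purely combinatorial: $\A$ is the set of elements of the arithmetic progression $m + \N M$ lying in $[0, L^\s M^\s)$, and since $0 \leq m < M$, these are exactly $m, m+M, \ldots, m + (L^\s M^{\s-1} - 1)M$, so $\#\A = L^\s M^{\s-1}$. I would dispatch this in one sentence.

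The heart of the matter is the bound on $\#\B$. The plan is to run the double-counting argument sketched in ``Step 3'' of \cref{sec:plan:3}, now cast in the language just developed. First I would observe that every $a \in \B$ can be written $a = m + (h-1)M$ for some $h \leq L^\s M^{\s-1}$, and by definition of $\B$ there is some $k = k(a) \in \{1,\ldots,K\}$ with $m + (h-1)M + k \in \Set$, i.e. $\erre_\sform(m + (h-1)M + k) \geq 1$. Summing the trivial inequality $\mathbf{1}_\B(a) \leq \sum_{k=1}^K \erre_\sform(a+k)$ over $a \in \A$ gives
\[
\#\B \;\leq\; \sum_{k=1}^K \sum_{a \in \A} \erre_\sform(a+k) \;=\; \sum_{k=1}^K \sum_{h=1}^{L^\s M^{\s-1}} \erre_\sform\bigl(m + k + (h-1)M\bigr).
\]
For each fixed $k$, the inner sum is a sum of $\erre_\sform$ along the arithmetic progression $(m+k) + \N M$, truncated appropriately; since $m + k < M$ (using $m + K < M$), I can apply \cref{ap:density} with $m$ replaced by $m+k$ to get $\sum_{h=1}^{L^\s M^{\s-1}} \erre_\sform(m+k+(h-1)M) \leq \sol_\sform(m+k, M)\, L^\s$. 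Summing over $k = 1, \ldots, K$ and invoking the hypothesis \eqref{eq:interval} yields
\[
\#\B \;\leq\; L^\s \sum_{k=1}^K \sol_\sform(m+k,M) \;\leq\; \tfrac12 L^\s M^{\s-1},
\]
which is the desired bound.

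The only minor subtlety — which I would state carefully rather than treat as an obstacle — is matching the truncation ranges: I must check that the interval of $h$'s appearing after applying \cref{ap:density} (with modulus $M$ and residue $m+k$) is consistent with the range $1 \leq h \leq L^\s M^{\s-1}$, i.e. that $m + k + (h-1)M$ ranges over $(m+k+\N M) \cap [0, L^\s M^\s)$ up to the endpoint discrepancy. Since $0 < m+k < M$, the integers $m+k, m+k+M, \ldots$ below $L^\s M^\s$ are exactly those with $h \leq L^\s M^{\s-1}$, so the ranges agree exactly and no loss occurs. No genuinely hard step is involved; the proposition is a bookkeeping consequence of \cref{ap:density} and the hypothesis \eqref{eq:interval}, and the real content (producing $M, m$ with property \eqref{eq:interval}) lies in \cref{main:modM} and Step~2, not here.
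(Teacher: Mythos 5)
Your argument is correct and follows essentially the same double-counting route as the paper: bound $\#\B$ by a sum over the Maier matrix, regroup by the shift $k$, apply \cref{ap:density} with residue $m+k$ (licit since $m+K\less M$), and close with \eqref{eq:interval}. The one slip is a citation overreach: you invoke \cref{ap:density} to bound $\sum_{h}\erre_\sform\bigl(m+k+(h-1)M\bigr)$, but as stated that proposition only bounds the cardinality $\#\left(\Set\cap(m+k+M\N)\cap[0,L^\s M^\s)\right)$, not the sum of representation numbers, which can exceed it since $\erre_\sform(n)$ may be larger than $1$. This costs nothing: in your first inequality replace $\erre_\sform(a+k)$ by the indicator of $\Set$ at $a+k$ (all you use is that a point of $\B$ contributes at least one element of $\Set$ in its window), and the inner sums become exactly the cardinalities that \cref{ap:density} controls --- which is how the paper phrases the same computation; alternatively, the stronger bound on the sum of $\erre_\sform$ is the intermediate display inside the proof of \cref{ap:density}, so you could cite that instead.
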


\begin{proof}
Since $m\less M$, the inequality $\#\A = L^\s M^{\s-1}$ is clear. 
To estimate $\#\B$, first notice that
\begin{equation}\label{eq:ap:interval}
\#\B \leq \sum_{h=1}^{L^\s M^{\s-1}} \#\left(\Set \cap(m+(h-1)M +  [1,K])\right)
\end{equation}
because each element of $\B$ contributes at least 1 to the sum in the right hand side of \eqref{eq:ap:interval}. 
Now observe that this sum is equal to
\[
 \sum_{i=1}^K  \#\left(\Set \cap(m+ i + M\N)\cap[0,L^\s M^{\s})\right).
\]
because $m+K\less M$. By \cref{ap:density} we deduce that 
\[
\#\B \leq \sum_{i=1}^K L^\s\,\sol_\sform(m+i,M) \leq \frac 1 2 L^\s  M^{\s-1}.
\]
\end{proof}

We remark that $\{a+1,\ldots,a+K\}$ is a gap in the values of $\form$ for any $a\in\A\setminus\B$ as in \cref{ap:interval}. 
In particular, the existence of such gaps follows from an inequality of the form \eqref{eq:interval}.

\subsection{Choice of parameters}
\label{sec:main:parameters}

In order to fulfil \eqref{eq:interval}, we will use the upper bounds on the summands $\erre_\sform(m+i,M)$ coming from \cref{main:modM} and from a suitable choice of sets $\P_1\subseteq\P_2\subset\P_\sform$. 
We will set $\P_2=\P_\sform\cap [1,T]$, for some $T$ large enough, and the next lemma is about finding the appropriate values of $T$.

\begin{definition}\label{def:tau}
For all $\gamma\in\R_+$ and $K\in \N_+$ we set
\[
\begin{aligned}
\tau_3(\gamma,K) &:= \gamma K^2 (\log K)^4;\\
\tau_4(\gamma,K) &:= \exp (\exp (\gamma K \log K )).
\end{aligned}
\]
\end{definition}

\begin{lemma}\label{tau}
Let $\s,\form,\P_\sform,\bx$ be as in \cref{main:modp}. 
Then there is $\gx\geq 1$ such that for all $K\in\N$ with $K\geq 2$, and all $T\geq \tau_{\s}(\gx,K)$, we have
\begin{equation}\label{eq:tau}
\log\left(\frac 1 {2K}\right) 
\geq 
\bx 
-
\frac 1 K \sum_{p\in \P_\sform\cap[1,T]}
          \bx(p^{1-\frac\s 2} - p^{-\frac \s 2})
+	
\sum_{p\in\P_\sform\cap[1,T]} 
       (\s-1)^\s   p^{\frac 1 2 - \frac \s 2}.
\end{equation}
In particular, for this choice of $T$ the set $\P_\sform\cap [1,T]$ is nonempty.
\end{lemma}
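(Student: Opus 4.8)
The plan is to fix $K\geq 2$, to view the right-hand side of \eqref{eq:tau} as a step function $g_\s(T)$ of the real parameter $T\geq 1$ (it changes only when $T$ crosses a prime of $\P_\sform$), to show that $g_\s$ becomes non-increasing once $T$ is a large multiple of $K^2$, and thereby to reduce \eqref{eq:tau} to the single base point $T=\tau_\s(\gx,K)$, where it is checked by substituting the asymptotics of \cref{asymptotic} and taking $\gx$ large enough. The one subtlety is the uniformity: a single $\gx$ must serve for all $K\geq 2$ at once.

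First I would apply \cref{asymptotic} with $\mcl A=\P_\sform$, which is legitimate since \eqref{eq:main:modp:P} is exactly its hypothesis, with $c=\dx\geq\dxx$ and $d=\ax$; all implied constants below then depend only on $\sform$. In the cubic case this gives $\sum_{p\in\P_\sform\cap[1,T]}\bx(p^{-1/2}-p^{-3/2})=(2\bx\dx+o(1))\sqrt T/\log T$ and $\sum_{p\in\P_\sform\cap[1,T]}8\,p^{-1}=8\dx\log\log T+O(1)$, so in $g_3(T)$ the negative ``gain'' term $\tfrac1K\sum\bx(p^{-1/2}-p^{-3/2})$ eventually dominates the positive ``loss'' term $\sum 8p^{-1}$. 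In the biquadratic case it gives $\sum_{p\in\P_\sform\cap[1,T]}\bx(p^{-1}-p^{-2})=\bx\dx\log\log T+O(1)$ and $\sum_{p\in\P_\sform\cap[1,T]}81\,p^{-3/2}=O(1)$, and again the gain wins, now over a bounded loss.

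For the monotonicity: as $T$ increases past a prime $p\in\P_\sform$, $g_\s(T)$ changes by $(\s-1)^\s p^{1/2-\s/2}-\tfrac{\bx}{K}(p^{1-\s/2}-p^{-\s/2})$; multiplying by $p^{\s/2}>0$ and using $p-1\geq p/2$, this change is $\leq 0$ as soon as $\sqrt p\geq 2K(\s-1)^\s/\bx$, i.e. once $p\geq p_1(K):=4K^2(\s-1)^{2\s}/\bx^2$. Hence $g_\s$ is non-increasing on $[p_1(K),\infty)$, and it suffices to exhibit $\gx\geq 1$ with $\tau_\s(\gx,K)\geq p_1(K)$ and $g_\s(\tau_\s(\gx,K))\leq\log\tfrac1{2K}$ for every $K\geq 2$.

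For the base point, put $T_0:=\tau_\s(\gx,K)$ and substitute. When $\s=4$, $\log\log T_0=\gx K\log K$, so $g_4(T_0)\leq\bx-\bx\dx\gx\log K+O(1)$; using $\log K\geq\log 2$ for $K\geq 2$, this is $\leq\log\tfrac1{2K}$ for $\gx$ large, while $\tau_4(\gx,K)=\exp(\exp(\gx K\log K))$ dwarfs $p_1(K)=4\cdot 3^8K^2/\bx^2$ for $\gx$ large. When $\s=3$, $\sqrt{T_0}=\sqrt\gx\,K(\log K)^2$ and $\log T_0=\log\gx+2\log K+4\log\log K\leq c(\gx)\log K$ with $c(\gx):=6+\log\gx/\log 2$ (using $\log\log K\leq\log K$ for $K\geq 2$), so $\tfrac1K\sqrt{T_0}/\log T_0\geq\bigl(\sqrt\gx/c(\gx)\bigr)\log K$ while $\log\log T_0\leq\log c(\gx)+\log K$; substituting, $g_3(T_0)+\log(2K)\leq C'+8\dx\log c(\gx)-\bigl(\bx\dx\sqrt\gx/c(\gx)-8\dx-1\bigr)\log K$ for an absolute constant $C'$. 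Since $\sqrt\gx/c(\gx)\to\infty$ much faster than $\log c(\gx)$, for $\gx$ large the bracketed coefficient is positive, so this bound is largest at $K=2$, where it is $\leq 0$; hence $g_3(T_0)\leq\log\tfrac1{2K}$ for all $K\geq 2$, and $\tau_3(\gx,K)=\gx K^2(\log K)^4\geq p_1(K)=256K^2/\bx^2$ holds for $\gx\geq 256/((\log 2)^4\bx^2)$. Choosing $\gx\geq 1$ above the finitely many explicit lower bounds met along the way, the monotonicity step upgrades the base estimate to \eqref{eq:tau} for every $T\geq\tau_\s(\gx,K)$; and the final assertion is immediate, since an empty $\P_\sform\cap[1,T]$ would force the right-hand side of \eqref{eq:tau} to equal $\bx>0>\log\tfrac1{2K}$. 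The part I expect to cost the most effort is the $\s=3$ base estimate: there both sums diverge with $T$, and one has to be sure the loss logarithm --- which also grows in $K$ --- stays permanently behind the gain and that raising $\gx$ really widens the margin (through $\sqrt\gx/\log\gx\to\infty$); the $\s=4$ case is comparatively routine because its loss is $O(1)$.
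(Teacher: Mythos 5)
Your proposal is correct, and its engine is the same as the paper's: feed the estimates of \cref{asymptotic} (legitimately applicable because of \eqref{eq:main:modp:P}) into the right-hand side of \eqref{eq:tau} at $T$ of size $\tau_{\s}(\gx,K)$, and observe that a single large $\gx$ works for every $K\geq 2$ because $\log K\geq \log 2$ and, for $\s=3$, the gain $\asymp \sqrt{\gamma}(\log K)^2/\log\max\{\gamma,K\}$ eventually dominates both $\log(2K)$ and the loss $\asymp\log\log\max\{\gamma,K\}$. Where you genuinely diverge is in how the quantifier ``for all $T\geq\tau_{\s}(\gx,K)$'' is handled. The paper asserts its two bounds (a) and (b) for every such $T$, which for $\s=4$ is immediate (the loss series converges and the gain is monotone in $T$), but for $\s=3$ really amounts to re-parametrizing: since the loss $\sum_{p\leq T}8p^{-1}\asymp\log\log T$ is unbounded in $T$, the stated bound in terms of $\max\{\gamma,K\}$ only makes sense if one tacitly replaces $\gamma$ by the value with $T\asymp\tau_3(\gamma,K)$ and checks the inequality for all $\gamma\geq\gx$. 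You instead prove that the right-hand side, viewed as a step function of $T$, is non-increasing once $T\geq 4K^2(\s-1)^{2\s}/\bx^2$ (the jump at a prime $p$ is $(\s-1)^\s p^{1/2-\s/2}-\tfrac{\bx}{K}(p^{1-\s/2}-p^{-\s/2})\leq 0$ there), and then verify only the base point $T=\tau_{\s}(\gx,K)$. This monotonicity reduction is a clean, explicit way to dispose of the one delicate point (the $\s=3$ case, where both sums diverge), at the cost of a short extra computation; the paper's route is terser but leaves that uniformity-in-$T$ step to the reader. Your treatment of the final nonemptiness claim (an empty $\P_\sform\cap[1,T]$ would make the right-hand side equal $\bx>0>\log\tfrac1{2K}$) coincides with the paper's.
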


Please compare \eqref{eq:tau} with \eqref{eq:modM:e} and notice the extra multiplicative factor $\frac 1 K$ in front of the first sum.

\begin{proof}
Consider first the case $\s=4$. 
Let $\gamma\geq 1$ and $T\geq \tau_4(\gamma,K)$. 
By \cref{main:modp} and \cref{asymptotic} we have that:
\begin{alignat*}{3}
\text{(a)}\quad	
	&	
	&& \sum_{p\in\P_\sform\cap[1,T]} (\s-1)^\s  p^{\frac 1 2 - \frac \s 2} 	
	&& \leq 	C_1;  \\
\text{(b)}\quad	
	& \frac 1  K 
	&& \sum_{p\in \P_\sform\cap[1,T]} \bx(p^{1-\frac\s 2} - p^{-\frac \s 2})	
	&&  \geq \gamma \bx\dx \log K - {C_2};
\end{alignat*}
for some constants $C_1,C_2\great 0$ independent of $K$. 
Then  \eqref{eq:tau} holds if $\gamma\geq \gx$ for some $\gx$ that can be chosen independently of $K\geq 2$.  
Now we consider the case $\s=3$. 
Let $\gamma\geq 1$ and $T\geq \tau_3(\gamma,K)$. 
From \cref{asymptotic,main:modp} we have that:
\begin{alignat*}{3}
\text{(a)}\quad	
	&	
	&& \sum_{p\in\P_\sform\cap[1,T]} (\s-1)^\s  p^{\frac 1 2 - \frac \s 2} 	
	&& \leq 	C_3 \log\log \max\{\gamma,K\};  \\
\text{(b)}\quad	
	& \frac 1  K 
	&& \sum_{p\in \P_\sform\cap[1,T]} \bx(p^{1-\frac\s 2} - p^{-\frac \s 2})	
	&&  \geq     C_4 \frac {\sqrt\gamma(\log K)^2} {\log \max\{\gamma,K\}};
\end{alignat*}
for some constants $C_3,C_4\great 0$ independent of $K$. 
Again, it is easy to see that  \eqref{eq:tau} holds if $\gamma\geq \gx$ for some $\gx$ that can be chosen independently of $K$. 
Finally, we observe that $\bx+\log(2K)\great 0$, so \eqref{eq:tau} doesn't hold if $\P_\sform\cap[1,T]=\emptyset$.
\end{proof}

\subsection{Conclusion}\label{sec:main:main}
Let $\s,\form$ be as in \cref{sec:main:cong}. 
Given $N,K\in\N_+$, we define
\[
\Gap_\sform (N,K) :=\{n\in\N: \ n\less N \text{ and } \Set \cap (n+[1,K])=\emptyset\}.
\]
We aim to show that for every $K$ there is $N\in\N_+$ such that $\Gap_\sform(N,K)$ is nonempty. 

\begin{theorem}\label{thm:main}
Let $\s,\form$ be as in \cref{sec:main:cong}. 
Then for all $K\geq 2$ there is a constant $\cxx\great 0$ such that for all $N\geq  e^{\s \cxx}$ we have 
\[
\#\Gap_\sform(N,K)\geq \frac {e^{-\cxx}} {32}  N.
\]
Moreover we can choose $\cxx=(\dx + o(1)) \tau_{\s}(\gx,K)$ as $K\to\infty$, where $\dx$ is as in \cref{main:modp}, and $\gx$ is as in \cref{tau}.
\end{theorem}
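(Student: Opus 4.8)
The plan is to assemble \cref{main:modp}, \cref{main:modM}, \cref{tau} and \cref{ap:interval}. Fix $K\ge 2$; let $\bx,\P_\sform,\dx$ be as in \cref{main:modp} and $\gx$ as in \cref{tau}, and set $T:=\tau_\s(\gx,K)$ (see \cref{def:tau}), $\P_2:=\P_\sform\cap[1,T]$, and $M:=\prod_{p\in\P_2}p$, a squarefree integer. By \cref{tau} we have $\P_2\ne\emptyset$; in fact \eqref{eq:tau} cannot hold unless $\P_2$ has more than $\tfrac{K}{\bx}\log(2K)$ elements, so $M$ is far larger than $K$. I will prove the theorem with $\cxx:=\log(2M)$.

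\emph{Choosing the residue class.} Write $w(p):=p^{1-\s/2}-p^{-\s/2}$, so that $0<w(p)<1$ for every prime $p$ (indeed $w(p)\le p^{-1/2}$ if $\s=3$ and $w(p)\le p^{-1}$ if $\s=4$). Allocating the primes of $\P_2$ one at a time, each to the currently lightest part, produces a partition $\P_2=\P^{(1)}\sqcup\cdots\sqcup\P^{(K)}$ whose parts differ in total weight by less than $\max_p w(p)<1$, hence satisfy
\[
\sum_{p\in\P^{(k)}}w(p)\ \ge\ \frac1K\sum_{p\in\P_2}w(p)\ -\ 1\qquad(1\le k\le K).
\]
By the Chinese Remainder Theorem take $m$ with $0\le m<M-K$ and $m+k\equiv 0\pmod p$ for all $p\in\P^{(k)}$ and all $1\le k\le K$ — available since $M\gg K$, the minor point of fitting $m$ below $M-K$ being absorbed by the fact that \cref{ap:interval} uses $m+K<M$ only as a normalization (its proof splits $\A$ into $K$ residue classes modulo $M$, and the conclusion $\#\A=L^\s M^{\s-1}$, $\#\B\le\tfrac12 L^\s M^{\s-1}$ is valid for any $m\in[0,M)$ once $K<M$).

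\emph{The key estimate \eqref{eq:interval}.} For each $k$ apply \cref{main:modM} with $\P_1:=\P^{(k)}$ and with $m$ there replaced by $m+k$ (legitimate since $m+k\equiv 0\pmod p$ for $p\in\P^{(k)}$): this gives $\sol_\sform(m+k,M)\le\e_k M^{\s-1}$, where $\log\e_k$ may be taken equal to the right-hand side of \eqref{eq:modM:e}, namely $-\bx\sum_{p\in\P^{(k)}}w(p)+(\s-1)^\s\sum_{p\in\P_2\setminus\P^{(k)}}p^{1/2-\s/2}$. Enlarging the second sum to all of $\P_2$ and inserting the greedy bound for the first,
\[
\log\e_k\ \le\ \bx-\frac{\bx}{K}\sum_{p\in\P_2}w(p)+(\s-1)^\s\sum_{p\in\P_2}p^{1/2-\s/2},
\]
and the right-hand side is $\le\log\tfrac{1}{2K}$ by \eqref{eq:tau} (applicable since $T\ge\tau_\s(\gx,K)$). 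Hence $\e_k\le\tfrac{1}{2K}$ for each $k$, so $\sum_{k=1}^K\sol_\sform(m+k,M)\le\tfrac12 M^{\s-1}$, which is \eqref{eq:interval}. I expect this to be the delicate point: the precise shape of \eqref{eq:tau} — the factor $\tfrac1K$ in front of the main sum, and the additive $\bx$ that soaks up the loss from balancing the partition — is calibrated exactly so that the $K$-fold sum of the per-residue bounds collapses to $\tfrac12 M^{\s-1}$.

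\emph{The Maier matrix and the count.} Given $N\ge e^{\s\cxx}=(2M)^\s$, put $L:=\lfloor N^{1/\s}/M\rfloor$, so $L\ge 2$ and $(LM)^\s\le N$, while $N^{1/\s}\ge 2M$ gives $L\ge N^{1/\s}/(2M)$. Apply \cref{ap:interval} with this $L,M,m,K$ (its hypothesis \eqref{eq:interval} just verified): it yields $\#\A=L^\s M^{\s-1}$ and $\#\B\le\tfrac12 L^\s M^{\s-1}$, hence $\#(\A\setminus\B)\ge\tfrac12 L^\s M^{\s-1}$. Every $a\in\A\setminus\B$ satisfies $0\le a<L^\s M^\s\le N$ and $\Set\cap(a+[1,K])=\emptyset$, so $a\in\Gap_\sform(N,K)$, and therefore
\[
\#\Gap_\sform(N,K)\ \ge\ \tfrac12 L^\s M^{\s-1}\ \ge\ \frac{1}{2}\cdot\frac{N}{2^\s M^\s}\cdot M^{\s-1}\ =\ \frac{N}{2^{\s+1}M}\ \ge\ \frac{N}{64\,M}\ =\ \frac{e^{-\cxx}}{32}\,N,
\]
using $2^{\s+1}\le 64$ for $\s\in\{3,4\}$ and $e^{-\cxx}=\tfrac{1}{2M}$. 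Finally, for the asymptotic clause: $\cxx=\log 2+\sum_{p\in\P_\sform\cap[1,T]}\log p$, and since $\P_\sform$ satisfies the hypothesis of \cref{asymptotic} with $c=\dx$ by \eqref{eq:main:modp:P}, formula \eqref{eq:asymp:log} gives $\sum_{p\in\P_\sform\cap[1,T]}\log p=(\dx+o(1))T$ as $T\to\infty$; since $T=\tau_\s(\gx,K)\to\infty$ as $K\to\infty$, we conclude $\cxx=(\dx+o(1))\tau_\s(\gx,K)$.
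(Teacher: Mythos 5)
Your proof is correct and takes essentially the same route as the paper's own: the same $T=\tau_{\s}(\gx,K)$ and squarefree modulus $M$, the same balanced $K$-part partition of $\P_\sform\cap[1,T]$ fed into \cref{main:modM} with $\e=\frac{1}{2K}$ via \eqref{eq:tau}, then \cref{ap:interval} and the count with $L=\lfloor N^{1/\s}/M\rfloor$, and finally $\log M=(\dx+o(1))T$ from \cref{asymptotic}. The only deviations are cosmetic: you take $\cxx=\log(2M)$ rather than (implicitly) $\log M$, and you explicitly flag the requirement $m+K<M$ in \cref{ap:interval}, a point the paper's proof passes over silently.
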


\begin{proof}
Fix $K\geq 2$ and let $T \geq  \tau_{\s}(\gx,K)$.
By \cref{tau} we have that $\P_\sform\cap[1,T]\neq\emptyset$, so let
\[
 M:=\prod_{p\in\P_\sform\cap[1,T]} p,
\]
let $N\in\N$ with $N\geq M^\s$, and let $L:=\lfloor \sqrt[s] N / M \rfloor$.
Since $p^{1-\frac s 2}-p^{-\frac\s 2}\less 1$ for all $p\geq 1$, we can easily construct a partition
\[
 \P_\sform\cap [1,T] = \P_\sform^{(1)}\sqcup\dots \sqcup \P_\sform^{(K)}
\]
such that for all $i\in\{1,\ld,K\}$ we have 
\begin{equation}\label{eq:main:PK}
\sum_{p\in\P_\sform^{(i)}}( p^{1-\frac \s 2} - p^{-\frac \s 2})	 \geq 	-1+ \frac 1 K \sum_{p\in\P_\sform\cap[1,T]}( p^{1-\frac \s 2} - p^{-\frac \s 2}).
\end{equation}
By the Chinese Remainder Theorem there is some $m\in\N$ with $m\less M$ that satisfies $m\equiv -i \pmod p$ for all $i\in\{1,\ld,K\}$ and all $p\in \P_\sform^{(i)}$. 
By \eqref{eq:main:PK}, \cref{tau} and \cref{main:modM} with $\e=\frac 1 {2K}$ we deduce that
\[
\sol_\sform(m+i,M) \leq \frac 1 {2K} M^{\s-1}
\]
for all $i\in\{1,\ld,K\}$. 
Then \cref{ap:interval} implies that 
\[
\#\Gap_\sform (L^\s M^\s,K) 	\geq  	\frac 1 2 L^\s M^{\s-1} = \frac {(L+1)^\s M^\s }{2M}\left(\frac L {L+1}\right)^\s \geq \frac N{2M} \left(\frac 1 2 \right)^\s \geq  \frac N{32M}.	
\]
By \cref{asymptotic} we have that $\log M = T (\dx+o(1))$ as $T\to\infty$. 
Since  $\tau_{\s}(\gx,K)\to\infty$ as $K\to \infty$, and since $\Gap_\sform (L^\s M^\s,K)\subseteq \Gap_\sform (N,K)$, the theorem follows.
\end{proof}

We remark that, despite the appearances, in general a larger value of $\dx$ corresponds to a smaller value of $\cxx$ in \cref{thm:main}. 
As a corollary of \cref{thm:main} we get the theorems stated in the Introduction.

\begin{proof}[Proof of \cref{thm:main:intro:3}.]
Let $\s=3$, let $\form$ be as in \cref{thm:main:intro:3} and let $N,K\in\N$ with $K\geq 2$. 
From \cref{thm:main} (applied to estimate $\#\Gap_\sform (N-K,K)$) it is possible to compute some constant $\gamma\great 0$, independent of $N$ and $K$, such that whenever the inequality
\begin{equation}\label{eq:main:intro:3}
N\geq \exp(\gamma K^2(\log K)^4)
\end{equation}
holds, there is a gap of length $K$ in the values of $\form$ less than $N$. 
When $N\geq e^e$ we can write $K=\kappa\frac{\sqrt{\log N}}{(\log\log N)^2}$ for some $\kappa\great 0$. 
If $\kappa\leq 1$ we have $\log K\leq\frac 1 2 \log \log N$, so  \eqref{eq:main:intro:3} holds if moreover
\[
N\geq \exp\left(\gamma\kappa^2 \frac {1}{2^4}\log N\right),
\]
which is satisfied when $\kappa\leq\kx:=\min\{1,4/\sqrt\gamma)$.
\end{proof}

\begin{proof}[Proof of \cref{thm:main:intro:4}.]
Let $\s=4$, let $\form$ be as in \cref{thm:main:intro:4} and let $N,K\in\N$ with $K\geq 2$. 
As in the previous case, we deduce from \cref{thm:main} that there is some constant $\gamma\great 0$ independent of $N$ and $K$  such that appropriate gaps of length $K$ esist when the inequality
\begin{equation}\label{eq:main:intro:4}
N\geq \exp(\exp(\exp(\gamma K\log K)))
\end{equation}
holds. 
When $N\geq e^{e^{e^{e}}}$ we can write $K=\kappa\frac{\log\log\log N}{\log\log\log\log N}$ for some $\kappa\great 0$. 
If $\kappa\leq 1$ we have $\log K\leq \log\log\log\log N$, so \eqref{eq:main:intro:4} holds if moreover
\[
N\geq \exp(\exp(\exp(\gamma\kappa\log\log\log N))),
\]
which is satisfied when $\kappa\leq\kx=\min\{1,1/\gamma)$.
\end{proof}

\begin{remark}\label{rmk:easy}
For some diagonal forms a more elementary proof can be given, i.e. not involving Hecke characters and Chebotarev's theorem for abelian extensions. 
For example for the biquadratic diagonal form $\form= x_1^4+x_2^4+x_3^4+x_4^4$ we notice that $\Kpp=-5$ for all $q\equiv 5\pmod 8$. 
Since $\rhopp\leq 2$, we see that \cref{main:modp} holds with $\bx=3$ and $\P_\sform=\{q\text{ prime}:\  q\equiv 5\pmod 8\}$, even without referring to the equidistribution of $\Hpp$.

We can avoid the reference to an equidistribution result for cubic forms as well. 
For example for $\form= x_1^3+x_2^3+x_3^3$  we can prove that if $p\equiv 1 \pmod 3$ and $m$ is a nonzero noncubic residue class modulo $p$ (i.e. $\chip(m)\not\in\{0,1\}$), then $\sol_\sform(m,p)\leq p^2 -3p +2\sqrt p$. 
This is enough to imply the existence of unbounded gaps, though with smaller size compared to \cref{thm:main:intro:3}.
With this alternative approach, it helps to observe that for all primes $p$ large enough we can find $K$ consecutive residue classes modulo $p$ at which $\chip$ assumes any given value, see \cite{MO}.
\end{remark}

\section*{Acknowledgements}

I would like to thank my supervisor Damien Roy for his encouragement and for his many comments on this work. 
Among the many people to whom I had the pleasure to speak about this project, I am specially grateful to Simon Rydin Myerson and Marc Hindry for their interesting remarks. 
I also thank Martin Rivard-Cooke for having introduced me to the problem of gaps for $\form= x_1^3+x_2^3+x_3^3$ and Daniel Fiorilli for his comments on the content of the paper.  
For their help in finding references, I thank Daniel Fiorilli, Gerry Myerson and the user EFinat-S from Mathoverflow. 
I thank Kam Hung Yau for spotting some typos in a previous version of the paper. 
I thank an anonymous referee for valuable suggestions, especially concerning the introduction. 
Finally, I thank Francesco Veneziano for discussing with me the problem of gaps in the case of degree two: 
the strategy followed in this article was designed as an attempt to generalize our computations to higher degree.
This work was supported in part by a full International Scholarship from the Faculty of Graduate
and Postdoctoral Studies of the University of Ottawa and by NSERC. 

\bibliographystyle{abbrv}
\bibliography{biblio_gaps_diagonal}

\end{document}